\documentclass[10pt,a4paper]{article}
\usepackage[utf8]{inputenc}
\usepackage[american]{babel}
\usepackage[all]{xy}
\usepackage{enumitem}
\usepackage[dvipsnames]{xcolor}
\usepackage{graphicx}
\usepackage{stmaryrd}
\usepackage{amsfonts,amstext,amssymb,amsmath,amsthm,pspicture,bigints}
\usepackage{moreverb}
\usepackage{pifont}
\usepackage{pst-all}
\usepackage[left=2cm,right=2cm,top=2cm,bottom=2cm]{geometry}
\usepackage{esint}
\usepackage{fourier-orns}
\usepackage{mathrsfs}
\usepackage{array}
\usepackage{hyperref}
\usepackage{caption}
\usepackage{subcaption}
\usepackage{authblk}
\usepackage{makecell}
\newcolumntype{C}[1]{>{\centering\arraybackslash}b{#1}}
\newcolumntype{R}[1]{>{\raggedleft\arraybackslash}b{#1}}
\newcolumntype{L}[1]{>{\raggedright\arraybackslash}b{#1}}
\newcolumntype{M}[1]{>{\centering}m{#1}}
\newtheorem{theo}{Theorem}[section]
\newtheorem{defin}{Definition}[section]
\newtheorem{lem}{Lemma}[section]
\newtheorem{prop}{Proposition}[section]

\newtheorem{cor}{Corollary}[section]
\newtheorem{rem}{Remark}[section]
\usepackage{bbm}
\numberwithin{equation}{section}
\newcommand{\ii }{{\rm i} }

\DeclareMathOperator{\T}{\mathbb{T}}
\DeclareMathOperator{\C}{\mathbb{C}}

\date{}
\title{On stationary Quasi-Geostrophic Shallow-Water flows}
\author{Vittorio Baroncini\qquad Claudia Garc\'ia\qquad Emeric Roulley}

\begin{document}
\maketitle

\begin{abstract}
In this paper, we prove the existence of $\mathbf{m}$-fold doubly-connected stationary vortex patches for the quasi-geostrophic shallow-water equations. The solutions are obtained through a bifurcation analysis based on the Crandall--Rabinowitz theorem, with either the inner radius of an annulus or the Rossby deformation length serving as the bifurcation parameter. A central feature of the work is the highly nontrivial analysis of modified Bessel functions arising in the spectral study of the linearized operator. The proof requires delicate and extensive manipulations of these special functions, including precise asymptotic expansions, differentiation formulas, recurrence identities, monotonicity properties and the analysis of singular quantities governing the bifurcation mechanism. These ingredients are essential for characterizing the bifurcation points and establishing the transversality conditions. Finally, we investigate the radial symmetry of stationary and uniformly rotating simply-connected vortex patch solutions, therefore motivating the previous bifurcation results.
\end{abstract}
\tableofcontents
\section{Introduction}
In this introductive section, we present the atmospheric model of interest in this study, some related literature, our main contributions and the key steps of their proofs.

\subsection{Context and main results}
The Quasi-Geostrophic Shallow-Water equations (QGSW) is a well-known model used to describe the dynamics of the atmospheric and oceanic circulation at large scales, see \cite[p.220]{V17}. It is derived from the rotating Shallow-Water equations in the limit of fast rotation compared to the variation of the free surface, namely for small Rossby number. The model takes the form of an active scalar equation satisfied by the potential vorticity $q$,
\begin{equation}\label{QGSW:eq}
    \begin{cases}
        \partial_tq+u\cdot\nabla q=0, & \textnormal{in }\mathbb{R}_+\times\mathbb{R},\\
        u=\nabla^{\perp}(\Delta-\lambda^2)^{-1}q & \textnormal{in }\mathbb{R}_+\times\mathbb{R}, \\
        q(0, \cdot) = q_0 & \textnormal{in } \mathbb{R},
    \end{cases}\qquad\nabla^{\perp}\triangleq\begin{pmatrix}
        -\partial_{x_2}\\
        \partial_{x_1}
    \end{pmatrix}.
\end{equation}
The vector field $u$ is the velocity field of the fluid that is solenoidal. The external parameter $\lambda>0$ is the inverse Rossby radius linked to the Coriolis frequency $\omega_c$, the gravity constant $g$ and the mean active layer depth $H$ through the relation
$$\lambda\triangleq\frac{\omega_c}{\sqrt{gH}}\cdot$$
Note that one recovers the classical 2D Euler equation by setting $\lambda = 0$ in \eqref{QGSW:eq}, which physically corresponds to neglecting the Coriolis force and rotational effect. The corresponding stream function $\psi$ to the divergence-free velocity field $u$ satisfies
$$u=\nabla^{\perp}\psi,\qquad(\Delta-\lambda^2)\psi=q.$$
Inverting the above Helmholtz equation gives the following integral representation of convolution-type for the stream function
\begin{equation*}
    \psi(t,x)=-\frac{1}{2\pi}\int_{\mathbb{R}^2}K_0\big(\lambda|x-y|\big)q(t,y)dy,
\end{equation*}
where $K_0$ is the modified Bessel function of the second kind. Such special functions will be of constant use throughout this work and we refer to Appendix \ref{appendix Bessel} which gathers some of their properties used in our analysis. The QGSW dynamics has been extensively studied in the physics literature \cite{JD20,PD11,PD13,PD14,P91,PZF89}, while its mathematical analysis has been developed more recently; see, for example, \cite{DHR19,HH23,HR21,HM25,MMO26,R21,YZ24}. The present paper contributes to this line of research by providing new mathematical results on the QGSW dynamics. The Yudovich theory for the system \eqref{QGSW:eq} has been recently rigorously justified in \cite{MMO26}. This allows to consider vortex patches, namely vorticity distributions initially uniformly concentrated on a bounded region $D_0 \subset \mathbb{R}^2$, namely
$$q_0=\mathbf{1}_{D_0}.$$ 
Since the vorticity is transported along particle trajectories, the patch structure is preserved throughout the time evolution. More precisely, $$q(t,\cdot)=\mathbf{1}_{D_t},\qquad D_t\triangleq X_{t}(D_0),$$
where $t\mapsto X_t$ is the flow map associated with the velocity field $u$ defined by the ordinary differential equation
$$\partial_tX_t(x)=u\big(t,X_t(x)\big),\qquad X_0(x)=x.$$
In the case of smooth boundary, the problem of finding vortex patch solutions for the system \eqref{QGSW:eq} reduces to solving the so-called contour dynamics equations. This means that the dynamics of the patch is completely understood by knowing the evolutions of its boundary. More precisely, if we assume that each connected component of the boundary of the domain $D_t$ is parametrized by a smooth mapping $z(t, \cdot) : [0, 2\pi) \to \partial D_t$, then the function $z$ must satisfy the following kinematic condition 
\begin{equation} \label{CDE}
\forall\,(t,\vartheta)\in\mathbb{R}_+\times[0,2\pi),\quad\Big[\partial_t z(t,\vartheta)-u\big(t,z(t,\vartheta)\big)\Big]\cdot\vec{n}\big(t,z(t,\vartheta)\big)=0,
\end{equation}
where $\vec{n}\big(t,z(t,\vartheta)\big)$ denotes the outward unit normal vector to $\partial D_t$ at the point $z(t,\vartheta)$. This equation simply states that the interface moves with the fluid, meaning that no fluid transport occurs across the boundary.\\

The case $\lambda=0$ corresponds to the classical 2D Euler equations and therefore has been much more studied. We begin by recalling the main results on vortex patch dynamics in this classical setting and relevant for our analysis. Due to the invariance of the equations under rotations, any radial profile generates a stationary solution. In the context of vortex patches, this translates into a radial shape of the domain $D_0$ and $D_t=D_0$ for all $t\geqslant0$. This includes discs, annuli or nested annuli. In particular, when the domain is the unit disc $\mathbb{D}$, the corresponding solution is called Rankine vortex. In 1874, Kirchhoff \cite{K74} discovered the first non-trivial example of vortex patch solutions corresponding to uniformly rotating ellipses. More precisely, if $D_0$ is an ellipse with semi-axes $a$ and $b$, then $D_t$ is given by a rigid rotation of the initial ellipse $D_0$ around its barycenter with explicit uniform angular velocity $\Omega=\frac{ab}{(a+b)^2}$.  More generally, vortex patches that preserve their shape during a uniform rotation are called V-states. By construction, they satisfy for some real number $\Omega\in\mathbb{R}$, called angular velocity, the property
$$\forall\,t\geqslant0,\quad D_t=e^{\ii\Omega t}D_0.$$
Note that stationary vortex patch solutions are just V-states with $\Omega=0$. In 1978, Deem \& Zabusky \cite{DZ78} numerically observed the existence of simply-connected V-states with $\mathbf{m}$-fold symmetry (i.e. invariance under a rotation of angle $\frac{2\pi}{\mathbf{m}})$. The rigorous mathematical justification of this fact was given in 1982 by Burbea \cite{B82}, by means of a bifurcation argument from the Rankine vortex. Bifurcations occur at the angular velocities $\Omega_\mathbf{m}\triangleq \frac{\mathbf{m}-1}{2 \mathbf{m}}$, $\mathbf{m} \geqslant 2$, each of which gives rise to a branch of $\mathbf{m}$-fold symmetric vortex patches. In 2013, Hmidi, Mateu \& Verdera, not only revisited and partially corrected Burbea's proof, but also established the $C^\infty$-regularity of boundary of the bifurcated V-states near the disc. These results show that the Rankine vortex is not an isolated V-state for angular velocities $\Omega\in\left(0,\frac{1}{2}\right)$. In contrast the situation changes completely outside that interval, where the implicit function theorem implies that no non-trivial V-states bifurcate from the disc in a neighborhood of it. This a priori does not exclude the possibility of other rotating solutions, which are not obtained as perturbations of the disc. However, when additional geometric assumptions are imposed on the initial domain, stronger rigidity phenomena emerge. In particular, Fraenkel \cite{F00} proved that any bounded simply-connected stationary patch with boundary of class $C^1$ must be a disc. This rigidity was subsequently extended by Hmidi \cite{H15}, who showed - using the moving plane method - that a simply-connected bounded $C^1$ domain is necessarily a disc if its angular velocity takes the critical value $\Omega = \frac{1}{2}$ or if $\Omega < 0$, under the extra hypothesis that the domain is convex. Finally, we mention the work by Gómez-Serrano, Park, Shi \& Yao \cite{GPSY20}, where they proved that any uniformly rotating bounded domain with $C^1$ boundary must be radially symmetric whenever $\Omega \in (-\infty, 0) \cup \left[\frac{1}{2}, +\infty\right)$, or radially symmetric up to translation if $\Omega = 0$. We mention that further extends to more general vorticity distributions have been claimed in the recent preprint \cite{FWZ25}. We also refer to \cite{HMW20,H25,P22} for more description on the local and global bifurcation diagram of the V-states. In the stationary case, the assumption that the vorticity is non-negative is essential for the rigidity result. Indeed, it was shown by Gómez-Serrano, Park \& Shi \cite{GPS21} that by allowing an arbitrarily small
portion of negative vorticity, one can find a non-radial stationary vortex patch. We refer the interested reader to \cite{EH26,EHSX24,EFR24,EFR25,GXX26} for further recent important analysis of stationary/steady solutions for 2D Euler equations.

A natural question is whether similar bifurcation structures persist in more general configurations, in particular for doubly-connected domains such as annuli. For the 2D Euler equations, Hmidi, de la Hoz, Mateu \& Verdera \cite{HHMV16} proved that, for every positive integer $\mathbf{m}$ satisfying the condition 
\begin{equation}\label{cond:DC-intro}
    1+b^{\mathbf{m}}-\frac{1-b^2}{2}\mathbf{m}<0,
\end{equation}
one can find a curve of non-annular $\mathbf{m}$-fold symmetric doubly-connected V-states bifurcating from the renormalized annulus
\begin{equation}\label{def:annulus}
    \mathbb{A}_b\triangleq\big\{z\in\mathbb{C}\quad\textnormal{s.t.}\quad b<|z|<1\big\},\qquad b\in(0,1),
\end{equation}
at each of the angular velocities
$$\Omega_\mathbf{m}^\pm(b) = \frac{1-b^2}{4} \pm \frac{1}{2\mathbf{m}} \sqrt{\left(\frac{\mathbf{m}\left(1-b^2\right)}{2} - 1\right)^2 - b^{2\mathbf{m}}}.$$
The degenerate case where the relation \eqref{cond:DC-intro} exactly vanishes has been partially treated in \cite{HM16-2,WXZ22}. We mention that the previous results have been extended to the case where the Euler equations are set in the unit disc: V-states both simply and doubly-connected patch domains \cite{HHHM15} and rigidity results \cite{FWZ24}.\\

Later, the construction of V-states has been extended to the generalized Surface Quasi-Geostrophic equations (gSQG) which is an active scalar equation where the Green kernel is proportional to $|x|^{-\alpha}$ for $\alpha\in(0,2).$ In the gSQG context, the landscape closely mirrors that of the classical Euler setting. In the simply-connected framework, it was demonstrated by Hmidi \& Hassainia \cite{HH15} (for $\alpha\in(0,1)$) and by Castro, Córdoba \& Gómez-Serrano \cite{CCG16} (for $\alpha\in[1,2)$), that, for every $\mathbf{m}\geqslant2$, there exists a branch of $\mathbf{m}$-fold symmetric V-states bifurcating from the unit disc $\mathbb{D}$ at some angular velocity related to the Gamma function. 
The regularity of the boundary of the bifurcating patches was further improved by Castro, Córdoba \& G\'omez-Serrano in \cite{CCG16}, where it was proved that these boundary are in fact analytic. For doubly-connected domains, de la Hoz, Hassainia \& Hmidi \cite{HHH16} (for $\alpha\in(0,1)$) and Renault \cite{R17} (for $\alpha=1$) established, for every $\mathbf{m}$ sufficiently large depending on $b$, the existence of $\mathbf{m}$-fold symmetric doubly-connected V-states bifurcating from the annulus $\mathbb{A}_b$ at the angular velocities 
$$\Omega_{\mathbf{m}}^{\alpha, \pm}(b) = \frac{1-b^2}{2} \Lambda_1(b) + \frac{1}{2} \left(1-b^{-\alpha}\right)\Theta_\mathbf{m} \pm \frac{1}{2} \sqrt{\left[\left(b^{-\alpha}+1\right) \Theta_\mathbf{m} - \left(1+b^2\right)\Lambda_1(b)\right]^2-4b^2\Lambda_{\mathbf{m}}^2(b)},$$
with 
$$\Lambda_n(b)\triangleq\frac{1}{b} \int_0^\infty J_n(bt)J_n(t) \, \frac{dt}{t^{1-\alpha}}, \qquad \Theta_n\triangleq\Lambda_1(1)-\Lambda_n(1),$$
and where $J_n$ denotes the Bessel function of the first kind. In their work, de la Hoz, Hassainia and Hmidi carried out numerical simulations indicating the existence of non-trivial V-states with exactly zero angular velocity, in a neighborhood of the annular configuration. This open question was subsequently fully resolved by Gómez-Serrano. Indeed, in \cite{G19} he presented the first non-trivial construction of analytic stationary patch solutions for the gSQG equations provided $\alpha\in(0,1)$, by making use of the inner radius $b$ as a bifurcation parameter. This work and its approach have provided the principal motivation for our main result Theorem \ref{thm stationary Vstates QGSW} for the QGSW model.\\

We move now to the QGSW equations. In 2019, Dritschel, Hmidi \& Renault \cite{DHR19} implemented the perturbative approach near the circular Rankine vortex, revealing that bifurcation curves branch out from the trivial family at the discrete angular velocities
\begin{equation} \label{bifpt:SC}
    \Omega_{\mathbf{m}}(\lambda) \triangleq I_1(\lambda) K_1(\lambda) - I_\mathbf{m}(\lambda) K_\mathbf{m}(\lambda),
\end{equation}
where $I_n$ and $K_n$ are the modified Bessel functions. We refer the reader to Appendix \ref{appendix Bessel} for the definition of such special functions and some main properties that will be used along the manuscript. Second, this framework was extended by Roulley \cite{R21} to the doubly-connected setting near the renormalized annulus $\mathbb{A}_b$. In this scenario, the bifurcation angular velocities are given by the two expressions
\begin{align*}
    \Omega_{\mathbf{m}}^{\pm}(\lambda,b)&\triangleq\frac{1-b^2}{2b}\Lambda_1(\lambda,b)+\frac{1}{2}\big(\Omega_{\mathbf{m}}(\lambda)-\Omega_{\mathbf{m}}(\lambda b)\big)\\
    &\qquad\qquad\pm\frac{1}{2b}\sqrt{\Big(b\big(\Omega_{\mathbf{m}}(\lambda)+\Omega_{\mathbf{m}}(\lambda b)\big)-(1+b^2)\Lambda_1(\lambda,b)\Big)^2-4b^2\Lambda_{\mathbf{m}}^2(\lambda,b)},
\end{align*}
where $\Omega_{\mathbf{m}}$ is defined in \eqref{bifpt:SC} and
\begin{equation} \label{def_lambda_m_intro}
    \Lambda_{\mathbf{m}}(\lambda,b)\triangleq I_{\mathbf{m}}(\lambda b)K_{\mathbf{m}}(\lambda).
\end{equation}
This latter result provides the core impetus for the present work. As a matter of fact, numerical simulations suggest that for certain values of the parameters $\lambda$ and $b$ these angular velocities $\Omega_\mathbf{m}^\pm(\lambda, b)$ may vanish (see Figure \ref{CV:eigen}), a phenomenon that hints at the existence of non-trivial stationary solutions. Crucially, however, such vanishing of the bifurcation frequencies doesn't occur in the Eulerian case ($\lambda = 0$): for every $\mathbf{m} \geqslant 2$ and $b \in (0,1)$, it holds $\Omega_{\mathbf{m}}^{\pm}(0, b) >0$ (see Figure \ref{CV:eigen}). Hence, bifurcation theory cannot produce non-trivial stationary doubly-connected V-states in the Euler setting. This is fully consistent with the strong rigidity properties of the two-dimensional Euler equations discussed above. Let us mention the extension of V-states to the two-layer QGSW model \cite{HH23}. See \cite{GHR23,GHM22,GHM23,GHM24,GHS20,HHRZ25,HXX26,HXX26-1,R23} for the existence of V-states or more general uniformly rotating solutions for other fluid models. 

\begin{figure}[!ht]
\begin{subfigure}[l]{0.3\textwidth}
		\includegraphics[width=\textwidth]{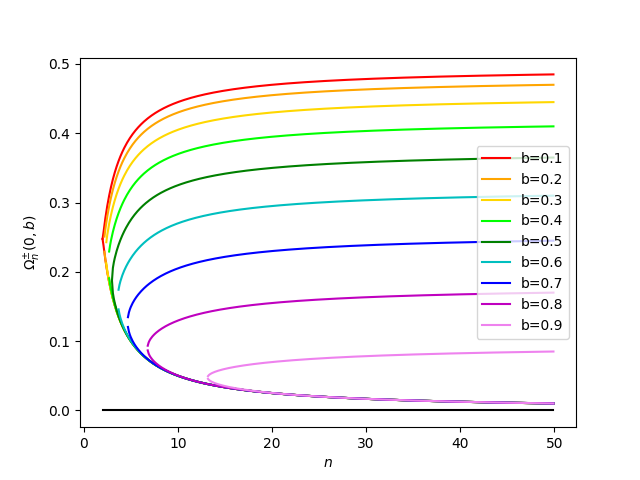}
	\end{subfigure}
	\centering
	\begin{subfigure}[c]{0.3\textwidth}
		\includegraphics[width=\textwidth]{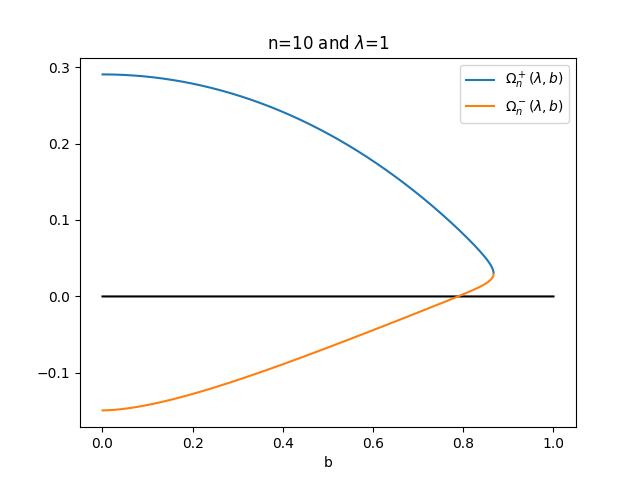}
	\end{subfigure}
	\begin{subfigure}[r]{0.3\textwidth}
		\includegraphics[width=\textwidth]{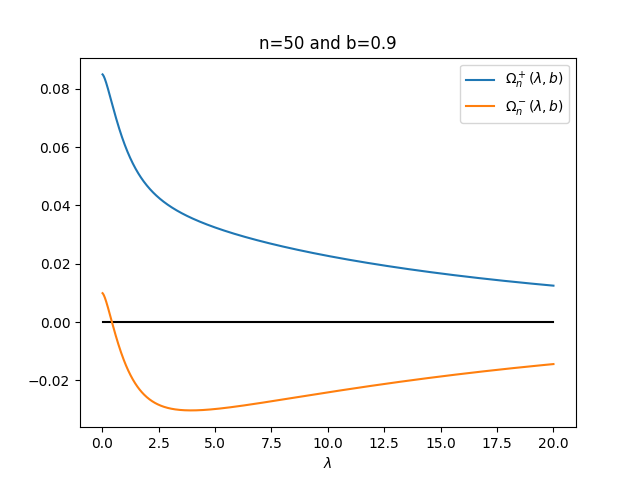}
	\end{subfigure}
	\caption{Non-vanishing Eulerian and vanishing QGSW annular spectrum.}\label{CV:eigen}
\end{figure}

The primary objective of this study is to rigorously establish the existence of non-trivial, doubly-connected, $\mathbf{m}$-fold symmetric stationary patch-type solutions to QGSW  equations \eqref{QGSW:eq}. To this end, we investigate two distinct bifurcation regimes originating from the trivial annulus configuration $\mathbb{A}_b$: one where the inverse Rossby radius $\lambda$ is fixed and the inner radius $b$ of the annulus varies, and another where $b$ is held constant and $\lambda$ changes. The exact statement of our main result reads as follows.
\begin{theo}\label{thm stationary Vstates QGSW}
	\textbf{(Stationary vortex patches for Quasi-Geostrophic Shallow-Water equations)}\\
    The following assertions hold true.
	\begin{enumerate}[label=(\roman*)]
		\item Let $\lambda\in(0,\infty).$ There exist $N(\lambda)\in\mathbb{N}^*$ and a strictly increasing sequence $\big(b_{\mathbf{m},\lambda}\big)_{\mathbf{m}\geqslant N(\lambda)}$ in $(0,1)$ converging to $1$ such that for any integer $\mathbf{m}\geqslant N(\lambda)$ there exist $\mathbf{m}$-fold doubly-connected stationary vortex patches bifurcating from the annulus
		$\mathbb{A}_{b_{\mathbf{m},\lambda}}$ for $(QGSW)_{\lambda}$ equations. In addition, the number $b_{\mathbf{m},\lambda}$ satisfies the asymptotic law
        $$1-b_{\mathbf{m},\lambda}\underset{\mathbf{m}\to\infty}{\sim}\frac{\beta(\lambda)}{\mathbf{m}},\qquad\beta(\lambda)>0.$$
		\item Let $b\in(0,1)$ and $\lambda_{\textnormal{max}}>0.$ There exist $N(b)\triangleq N(b,\lambda_{\textnormal{max}})\in\mathbb{N}^*$ and a strictly decreasing sequence $\big(\lambda_{\mathbf{m},b}\big)_{\mathbf{m}\geqslant N(b)}$ in $(0,\lambda_{\textnormal{max}})$ converging to $0$ such that for any integer $\mathbf{m}\geqslant N(b)$ there exist $\mathbf{m}$-fold doubly-connected stationary vortex patches bifurcating from the annulus  $\mathbb{A}_{b}$ for $(QGSW)_{\lambda_{\mathbf{m},b}}$ equations. In addition, the number $\lambda_{\mathbf{m},b}$ satisfies the asymptotic law
        $$\lambda_{\mathbf{m},b}\underset{\mathbf{m}\to\infty}{\sim}\frac{2}{\sqrt{(1-b^2)\mathbf{m}\log(\mathbf{m})}}\cdot$$
\end{enumerate}
\end{theo}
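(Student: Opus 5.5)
We will follow the Crandall--Rabinowitz approach of \cite{G19}, adapted to the doubly-connected QGSW setting of \cite{R21}. We parametrize the two boundary components of a perturbed annulus by conformal-type maps
$$z_1(\vartheta)=\big(1+r_1(\vartheta)\big)e^{\ii\vartheta},\qquad z_2(\vartheta)=\big(b+r_2(\vartheta)\big)e^{\ii\vartheta},$$
with $r_k$ $\mathbf{m}$-fold symmetric, even and lying in a Hölder or analytic space $X_\mathbf{m}$. We then write the stationary contour dynamics equation \eqref{CDE} with $\Omega=0$ as $F(p,r)=0$. Here $F=(F_1,F_2)$ is built from the stream function $\psi=-\frac{1}{2\pi}K_0(\lambda|\cdot|)*\mathbf{1}_D$, and $p$ stands for $b$ (case (i), with $\lambda$ fixed) or for $\lambda$ (case (ii), with $b$ fixed). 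Radial annuli give $F(p,0)=0$ for every $p$. Well-posedness and $C^1$ regularity of $F$ on a neighborhood of $r=0$ are exactly as in \cite{R21}, since the logarithmic singularity of $K_0$ is the same as in Euler. The only new ingredient is the spectral and transversality analysis at $\Omega=0$.

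\textbf{Linearization.} Following \cite{R21}, $d_rF(p,0)$ acts diagonally on Fourier modes. On the mode $n\mathbf{m}$ it is given by a $2\times2$ matrix $M_{n\mathbf{m}}(\lambda,b)$ with entries built from $\Omega_{n\mathbf{m}}(\lambda)$, $\Omega_{n\mathbf{m}}(\lambda b)$, $\Lambda_1(\lambda,b)$ and $\Lambda_{n\mathbf{m}}(\lambda,b)$, evaluated at $\Omega=0$. Its determinant $\Delta_{n\mathbf{m}}(\lambda,b)$ vanishes exactly when one of $\Omega^\pm_{n\mathbf{m}}(\lambda,b)$ vanishes. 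So we must find $p_\mathbf{m}$ with $\Delta_\mathbf{m}(p_\mathbf{m})=0$ and then check three conditions:
\begin{itemize}
\item[(a)] simplicity of the zero;
\item[(b)] $\Delta_{n\mathbf{m}}(p_\mathbf{m})\neq0$ for all $n\geqslant2$, which gives a one-dimensional kernel and, together with Fredholm index zero, codimension-one range;
\item[(c)] transversality, i.e.\ $\partial_p\Delta_\mathbf{m}(p_\mathbf{m})\neq0$, which is equivalent to the usual condition $\partial_p d_rF(p_\mathbf{m},0)[v]\notin\mathrm{Range}$.
\end{itemize}
The Fredholm property follows, as in \cite{R21}, by writing the operator as an invertible diagonal part plus a compact perturbation. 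This is why we need $\Omega=0$ to stay away from the degenerate values of the diagonal entries.

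\textbf{Locating the zeros.} In case (i), fix $\lambda$ and set $b=1-\beta/\mathbf{m}$. We use uniform asymptotics of $I_\nu(\nu z)K_\nu(\nu z)$ (Debye expansions) and $I_\mathbf{m}(\lambda b)K_\mathbf{m}(\lambda)\approx\frac{b^\mathbf{m}}{2\mathbf{m}}(1+O(\mathbf{m}^{-1}))$, with $b^\mathbf{m}\to e^{-\beta}$. With these, $\mathbf{m}\Delta_\mathbf{m}$ converges to an explicit function $G_\lambda(\beta)$ involving $I_1(\lambda)K_1(\lambda)$, $\lambda I_1'$ type terms and $e^{-2\beta}$. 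We show that $G_\lambda$ has a simple zero $\beta(\lambda)>0$. Heuristically, the $\Omega=0$ condition balances the $O(1/\mathbf{m})$ term coming from $I_\mathbf{m}K_\mathbf{m}\sim\frac{1}{2\mathbf{m}}$ against $(1-b^2)\Lambda_1\sim\frac{2\beta}{\mathbf{m}}I_1K_1$. An implicit-function or intermediate-value argument then produces $b_{\mathbf{m},\lambda}$ with $\mathbf{m}(1-b_{\mathbf{m},\lambda})\to\beta(\lambda)$, and strict monotonicity in $\mathbf{m}$ follows from a derivative estimate.

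In case (ii), we use the small-argument expansions
$$I_1K_1(\lambda)=\tfrac12+\tfrac{\lambda^2}{2}\log\lambda+O(\lambda^2),\qquad I_\mathbf{m}(x)K_\mathbf{m}(x)=\frac{1}{2\mathbf{m}}\big(1-\tfrac{x^2}{2(\mathbf{m}^2-1)}+\dots\big)$$
for $x\ll\mathbf{m}$. The stationarity condition then reduces, at leading order, to $\frac{1-b^2}{4}\lambda^2\log(1/\lambda)\approx\frac{1}{2\mathbf{m}}$. The Euler value $\frac{1-b^2}{4}-\frac{1}{2\mathbf{m}}>0$ is pushed down by the logarithmic correction. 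Solving with $\log(1/\lambda)\sim\frac12\log\mathbf{m}$ gives $\lambda^2\sim\frac{4}{(1-b^2)\mathbf{m}\log\mathbf{m}}$, which is the claimed law. We expect to make this precise by a rescaling $\lambda=\mu/\sqrt{\mathbf{m}\log\mathbf{m}}$ and a monotonicity argument in $\mu$.

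\textbf{Main obstacle.} The hardest step will be (b) together with (c): excluding $\Delta_{n\mathbf{m}}(p_\mathbf{m})=0$ for every $n\geqslant2$ uniformly, and proving the nonvanishing $p$-derivative. For (b) we will try to show that $n\mapsto\Omega^-_{n}(\lambda,b)$ (the relevant branch) is strictly monotone in $n$, so that it vanishes at most once. This requires monotonicity of $n\mapsto I_nK_n(x)$ and the bound $b^{n}$-type decay of $\Lambda_n$; the large-$n$ part is handled by asymptotics and the finitely many intermediate ones by the monotonicity. For (c) we differentiate $\Delta_\mathbf{m}$ using the recurrence identities $I_n'=I_{n+1}+\frac{n}{x}I_n$ and $K_n'=-K_{n+1}+\frac{n}{x}K_n$ (see Appendix \ref{appendix Bessel}), and show that the derivative is dominated by the derivative of the limiting profile ($G_\lambda'(\beta(\lambda))\neq0$, respectively the $\lambda^2\log\lambda$ term), which is nonzero for $\mathbf{m}\geqslant N$. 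Once (a)–(c) hold, Crandall--Rabinowitz yields the branches of stationary $\mathbf{m}$-fold patches in both cases.
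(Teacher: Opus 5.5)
Your overall architecture (Fourier-multiplier matrix, Fredholm index zero, one-dimensional kernel, transversality, Crandall--Rabinowitz) matches the paper's, and your reduction (c) is correct. However, the analytic core of part (i) is set up with the wrong normalization, and the non-resonance step (b) has no working argument.

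\textbf{Locating $b_{\mathbf m,\lambda}$.} Near $b=1$ the $O(1)$ parts of both diagonal entries of $M_{\mathbf m}$ cancel, so the three terms of $\mathcal D_{\mathbf m}$ all have size $\mathbf m^{-2}$. Set $a\triangleq\lambda I_0(\lambda)K_1(\lambda)$ and $c\triangleq\lambda I_1(\lambda)K_0(\lambda)$, so that $a+c=1$ by the Wronskian, and put $b=1-\beta/\mathbf m$. Then $\Omega_{\mathbf m}(\lambda)-b\Lambda_1=\frac{a\beta-1/2}{\mathbf m}+O(\mathbf m^{-2})$, $\Lambda_1-b\Omega_{\mathbf m}(\lambda b)=\frac{1/2-c\beta}{\mathbf m}+O(\mathbf m^{-2})$ and $b\Lambda_{\mathbf m}^2=\frac{e^{-2\beta}}{4\mathbf m^2}(1+O(\mathbf m^{-1}))$. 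Hence $\mathbf m\,\mathcal D_{\mathbf m}\to0$, and the correct statement is
\[
\mathbf m^2\,\mathcal D_{\mathbf m}\Big(\lambda,1-\frac{\beta}{\mathbf m}\Big)\longrightarrow G_\lambda(\beta)\triangleq\Big(a\beta-\frac12\Big)\Big(\frac12-c\beta\Big)+\frac{e^{-2\beta}}{4}=\frac{\beta^2}{2}\big(1-2ac-\Phi(\beta)\big),\qquad\Phi(\beta)=\frac{2\beta^2-2\beta+1-e^{-2\beta}}{2\beta^2}.
\]
Since $\Phi$ increases from $0$ to $1$ and $1-2ac\in[\frac12,1)$, $G_\lambda$ has a unique simple positive zero. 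This zero is exactly the paper's $\beta(\lambda)=\Phi^{-1}(1-2\lambda^2I_0K_0I_1K_1)$. The paper reaches it instead through Taylor's formula at $b=1$, using $\mathcal D_{\mathbf m}(1)=\mathcal D_{\mathbf m}'(1)=0$, $\mathcal D_{\mathbf m}''(1)\to1-2ac$ and $\mathcal D_{\mathbf m}'''(b)=2\mathbf m b^{2\mathbf m-2}+O(b^{2\mathbf m-2})$.

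Your heuristic balance of $\frac1{2\mathbf m}$ against $(1-b^2)\Lambda_1$ drops terms of the same order. It would give $\beta=1/(4I_1(\lambda)K_1(\lambda))$, which stays bounded as $\lambda\to0$, whereas in fact $\beta(\lambda)\to\infty$ (as it must, since $\mathcal D_n(0,b)>0$). With the correct $G_\lambda$, your rescaled-limit/implicit-function route is a legitimate alternative to the paper's. The convergence is $C^1$ in $\beta$ on compacts, so it yields $b_{\mathbf m,\lambda}$ with $\partial_b\mathcal D_{\mathbf m}(\lambda,b_{\mathbf m,\lambda})=-\mathbf m^{-1}G_\lambda'(\beta(\lambda))+o(\mathbf m^{-1})>0$.

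\textbf{Non-resonance.} A local construction near $\beta(\lambda)$ says nothing about $\mathcal D_{n\mathbf m}(\lambda,b_{\mathbf m,\lambda})$ for $n\geqslant2$. The paper derives non-resonance from global uniqueness of the zero of $\mathcal D_{k,\lambda}$ in $(0,1)$ for all large $k$, in three steps:
\begin{itemize}
\item zeros are confined near $1$, because $\mathcal D_{k,\lambda}(b)/b\to\mathcal D_{\infty,\lambda}(b)/b<0$ uniformly on $[0,1-\varepsilon]$;
\item the Taylor argument shows that every zero has $k(1-b)\to\beta(\lambda)$, which rules out the regime $k^{-1}\ll1-b\ll1$ that compact-$\beta$ convergence does not see;
\item $\mathcal D_{k,\lambda}'>0$ at every zero.
\end{itemize}
Your proposed substitute, monotonicity of $n\mapsto\Omega^-_n(\lambda,b)$, is not proved and is not a convenient object. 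The $\Omega^\pm_n$ are roots of an $n$-dependent quadratic, they are not even real when $n(1-b)$ is small, and you would also have to exclude $\Omega^+_{n\mathbf m}=0$.

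A mode-monotonicity argument does work if you phrase it through the matrix entries. Set $A_k\triangleq\Omega_k(\lambda)-b\Lambda_1$ and $B_k\triangleq\Lambda_1-b\Omega_k(\lambda b)$ at $b=b_{\mathbf m,\lambda}$. Then $A_{\mathbf m}B_{\mathbf m}=-b\Lambda_{\mathbf m}^2<0$, and $A_{\mathbf m}>0$ because $\beta(\lambda)>1/(2a)$ (the paper's lower bound). Since $I_kK_k$ decreases in $k$, for $k>\mathbf m$ you get $A_k>A_{\mathbf m}>0>B_{\mathbf m}>B_k$. Therefore $\mathcal D_k<A_{\mathbf m}B_{\mathbf m}+b\Lambda_k^2\leqslant0$ as soon as $\Lambda_k\leqslant\Lambda_{\mathbf m}$. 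For $k\geqslant2\mathbf m$ this last bound follows from $\Lambda_k=\frac{b^k}{2k}(1+O(k^{-1}))$. The same argument works in (ii), where $A_{\mathbf m}\to\frac{1-b^2}{2}$.

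\textbf{Transversality.} Your claim (c) is right and is a genuine shortcut. At a zero, $A_{\mathbf m}\neq0$ and $\operatorname{adj}M_{\mathbf m}=A_{\mathbf m}^{-1}v\,y^{\top}$, with $v=(-b\Lambda_{\mathbf m},A_{\mathbf m})^{\top}$ and $y=(\Lambda_{\mathbf m},A_{\mathbf m})^{\top}$. Jacobi's formula then gives $\langle\partial_pM_{\mathbf m}v,y\rangle=A_{\mathbf m}\,\partial_p\mathcal D_{\mathbf m}$. So the quantities $T_{\mathbf m,\lambda}$ and $T_{\mathbf m,b}$, which the paper computes separately, already follow from the simplicity of the zero. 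A minor slip: $I_1(\lambda)K_1(\lambda)=\frac12+\frac{\lambda^2}{4}\log\lambda+O(\lambda^2)$, not $\frac{\lambda^2}{2}\log\lambda$, though your reduced equation in (ii) is nonetheless the right one.
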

\begin{rem}We shall make the following remarks.
\begin{enumerate}
    \item The result of Theorem \ref{thm stationary Vstates QGSW}-$(i)$ states that fixing the model (i.e. fixing $\lambda$), we can construct stationary patches near specific very thin annuli. This is the analogous to G\'omez-Serrano's result \cite{G19} -- published in {\it Advances in Mathematics} -- for the generalized surface quasi-geostrophic (gSQG) equations where the Green kernel is proportional to $|x|^{-\alpha}$, fore some $\alpha\in(0,1)$. Even if less singular, the QGSW Green kernel $K_0(\lambda|x|)$ is non longer homogeneous and is related to modified Bessel functions. This difference makes the analysis substantially more delicate than in the aforementioned work.
    \item The result of Theorem \ref{thm stationary Vstates QGSW}-$(ii)$ allows to construct stationary patches near annuli of arbitrary size up to selecting the model. Each annulus admits highly symmetric families of non-trivial stationary solutions for QGSW models getting closer to the two-dimensional Euler equations as the symmetry increases. This is in contrast with the Euler equations themselves, which do not support stationary solutions of this type \cite{GPSY20}. This idea of playing with the model parameter to perform the bifurcation of V-states is new and inspired for instance from \cite{HR21}.
    \item The bifurcation analysis is run within the Hölderian functional framework. However, stationary solutions being in particular uniformly rotating with angular speed $\Omega=0,$ one can apply \cite[Lem. 4.1]{R21} to obtain that the boundaries of the patch solutions are actually analytic.
\end{enumerate}  
\end{rem}

We now state our second result providing some rigidity in the simply-connected class, therefore motivating the previous result.
\begin{theo}\label{thm sym Vstates QGSW}\textbf{(Rigidity results on simply-connected V-states for QGSW)}\\
Let $\lambda\in(0,\infty)$ and $\Omega\in\mathbb{R}.$ Let $D_{0}$ be a bounded, simply-connected domain in $\mathbb{R}^{2}$ generating a V-state at angular velocity $\Omega.$ 
\begin{enumerate}[label=(\roman*)]
\item We assume that $D_0$  has rectifiable boundary.
\begin{enumerate}
	\item If $\Omega\in(-\infty,0),$ then $D_{0}$ is radially symmetric.
\item If $\Omega=0$, then $D_{0}$ is radially symmetric up to a translation.
\end{enumerate}
\item We assume that $D_0$ is of class $C^1$. We denote 
$$R\triangleq \max_{x\in D_0}|x|.$$
If $\Omega\geqslant I_{1}(\lambda R)K_{1}(\lambda R),$ then $D_0=R\cdot\mathbb{D}.$
\end{enumerate}
\end{theo}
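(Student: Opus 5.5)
The plan is to adapt the variational rigidity argument of Gómez-Serrano, Park, Shi \& Yao \cite{GPSY20} for part $(i)$, and a boundary maximum-principle argument in the spirit of Hmidi \cite{H15} for part $(ii)$. In the rotating frame, a V-state with angular velocity $\Omega$ satisfies that
$$\Phi\triangleq\psi+\frac{\Omega}{2}|x|^2,\qquad \psi=-\frac{1}{2\pi}K_0(\lambda|\cdot|)*\mathbf{1}_{D_0},$$
is constant on each connected component of $\partial D_0$. Since $D_0$ is simply connected, $\Phi$ is constant on the whole of $\partial D_0$. Note that $-\psi=G*\mathbf{1}_{D_0}$, where $G\triangleq\frac{1}{2\pi}K_0(\lambda|\cdot|)>0$ is a positive, radially decreasing kernel.

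\textbf{Part $(i)$.} Take a divergence-free perturbation vector field $v$ and compute the first variation of the energy-type functional $\mathcal{E}(D)=-\frac12\int_D\psi_D+\frac{\Omega}{2}\int_D|x|^2$ along the flow of $v$. This variation is $\int_{\partial D_0}\Phi\, v\cdot n=0$, because $\Phi$ is constant on $\partial D_0$ and $v$ is divergence free. For the vector field, take the one of \cite{GPSY20}, namely $v=\nabla^\perp$ of something, or equivalently use the identity
$$\int_{D_0}\nabla\Phi\cdot(x-\ldots)\,dx$$
obtained from the continuous Steiner symmetrization. The concrete route I would follow is the integral identity
$$\int_{D_0}\big(\mathbf{1}_{D_0}^*-\ldots\big)\,dx.$$

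More directly, the plan is the one of \cite{GPSY20}, Section 2. Take $\varphi$ to be the solution of $\Delta\varphi=1$ in $D_0$, $\varphi=0$ on $\partial D_0$ (rectifiability suffices for the integrations by parts). Test the equation $\Phi=c$ on $\partial D_0$ against $\nabla\varphi\cdot\nabla\psi$ to obtain
$$\int_{D_0}\big(\varphi+\tfrac{|x|^2}{4}\big)\ldots$$
The outcome is an identity of the form
$$-\Omega\,C(D_0)=\iint_{D_0\times D_0}\big(\nabla\varphi(x)-\nabla\varphi(y)\big)\cdot(x-y)\,\frac{-G'(|x-y|)}{|x-y|}\,dx\,dy+(\text{a term}\geq 0),$$
where $C(D_0)=\int_{D_0}(|\nabla\varphi|^2\ldots)\geqslant0$.

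The key step, and the main obstacle, is the sign: we need the right-hand side to be $\geqslant0$, with equality only if $\varphi=\frac{|x-x_0|^2}{4}+$const, that is, only if $D_0$ is a disc. In \cite{GPSY20} this uses only that the kernel is radially decreasing. Here $-G'(r)=\frac{\lambda}{2\pi}K_1(\lambda r)>0$, so the same symmetrization inequality should carry over. For $\Omega<0$ the left-hand side forces equality, hence radial symmetry about the origin. For $\Omega=0$, symmetry holds only up to translation, since the center of the disc is not pinned down. I would first check carefully that the Green kernel being non-homogeneous does not break the Pohozaev/Rellich-type identity; only positivity and monotonicity of $G$ should be needed.

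\textbf{Part $(ii)$.} Let $x_0\in\partial D_0$ with $|x_0|=R$. The disc $R\mathbb{D}$ contains $D_0$, and the aim is to compare $\Phi$ with its value on the disc. I would use a moving-plane/reflection argument as in \cite{H15}. Alternatively, and more simply, use that $\Phi$ is constant on $\partial D_0$ and compute the normal derivative at $x_0$, where the outward normal is $x_0/R$ by the $C^1$ hypothesis and tangency. We have
$$\partial_n\Phi(x_0)=\Omega R-\int_{D_0}G'(|x_0-y|)\frac{(x_0-y)\cdot x_0}{R|x_0-y|}\,dy\;\geqslant\;\Omega R-\big(\text{same integral over }R\mathbb{D}\big),$$
and the integral over $R\mathbb{D}$ equals $R\,I_1(\lambda R)K_1(\lambda R)$. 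The monotonicity comes from $(x_0-y)\cdot x_0\geqslant0$ for $y\in R\mathbb{D}$ and $-G'>0$, with strict inequality unless $|R\mathbb{D}\setminus D_0|=0$. Hence if $\Omega\geqslant I_1(\lambda R)K_1(\lambda R)$ and $D_0\neq R\mathbb{D}$, then $\partial_n\Phi(x_0)>0$.

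We then derive a contradiction with the boundary behaviour of $\Phi$. Inside $D_0$ we have $\Delta\Phi=\lambda^2\psi+1+2\Omega$, and $\psi<0$. So I would instead argue along $\partial D_0$: with $\Phi\equiv c$ on $\partial D_0$, take the point where the gradient points outward, and use that $|\nabla\Phi|$ on $\partial D_0$ must be compatible with the sign of $\partial_n\Phi$ integrated via $\int_{\partial D_0}\partial_n\Phi=\int_{D_0}\Delta\Phi$.

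The precise closing step is the delicate point. I would try to apply the same estimate at every boundary point, using the comparison disc through that point centered appropriately, so as to conclude that $\partial_n\Phi>0$ everywhere. Combined with $\Phi=c$ on the boundary, the maximum principle applied to $\Phi$ outside $D_0$, where $\Delta\Phi=\lambda^2\psi+2\Omega$ and $\Phi\to\infty$, should then give the contradiction.
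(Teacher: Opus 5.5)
\textbf{Part (ii) has a genuine gap at the closing step.} Once the signs are fixed (see below), your estimate at the farthest point $x_0$ is the easy half of the paper's argument. With $U\triangleq(R\cdot\mathbb{D})\setminus D_0$ and $\mathcal{K}_\lambda=-\frac{1}{2\pi}K_0(\lambda|\cdot|)$, it says $\nabla(\mathbf{1}_U*\mathcal{K}_\lambda)(x_0)\cdot x_0>0$. But a sign of $\partial_n\Phi$ at one point contradicts nothing, and your plan to propagate it fails. The comparison used both $D_0\subset R\cdot\mathbb{D}$ and the fact that the normal at $x_0$ is $x_0/R$, and the rotation term is tied to the origin. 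So there is no ``comparison disc centred appropriately'' at a generic boundary point. Moreover, $\Phi$ obeys no useful maximum principle: in $D_0$, $\Delta\Phi=1+\lambda^2\psi-2\Omega$ has no definite sign, and the exterior problem, with $\Phi$ unbounded at infinity, gives no contradiction.

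The missing idea is to apply the minimum principle to the defect potential $\mathbf{1}_U*\mathcal{K}_\lambda$ rather than to $\Phi$.
\begin{itemize}
\item It is strictly superharmonic in $D_0$, since $U\cap D_0=\varnothing$ gives $\Delta(\mathbf{1}_U*\mathcal{K}_\lambda)=\lambda^2\,\mathbf{1}_U*\mathcal{K}_\lambda<0$ there.
\item By the V-state equation, its trace on $\partial D_0$ is the radial function $\frac{R}{\lambda}K_1(\lambda R)I_0(\lambda|x|)-\frac{1}{\lambda^2}-\frac{\Omega}{2}|x|^2-C$. Getting this requires the full disc potential $\mathbf{1}_{R\cdot\mathbb{D}}*\mathcal{K}_\lambda$ (Beltrami's formula, the antiderivative identities and the Wronskian), not only its normal derivative at $x_0$.
\item The radial derivative of this trace is $RK_1(\lambda R)I_1(\lambda r)-\Omega r$. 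It is $\leqslant0$ on $(0,R]$ because $s\mapsto I_1(s)/s$ is increasing and $\Omega\geqslant I_1(\lambda R)K_1(\lambda R)$; this is where the threshold really enters.
\item Hence the minimum over $\partial D_0$, and then over $\overline{D_0}$, is attained at $x_0$. This forces $\nabla(\mathbf{1}_U*\mathcal{K}_\lambda)(x_0)\cdot x_0\leqslant0$, contradicting your comparison integral.
\end{itemize}

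\textbf{Two further corrections.} First, the V-state equation is $\psi-\frac{\Omega}{2}|x|^2=C$ on $\partial D_0$, since the rigid rotation field is $\Omega x^\perp=\nabla^\perp\frac{\Omega}{2}|x|^2$; it is not $\psi+\frac{\Omega}{2}|x|^2$. Your inequality ``$\geqslant\Omega R-(\ldots)$'' is also reversed: $-G'(|x_0-y|)\,(x_0-y)\cdot x_0>0$ on $R\cdot\mathbb{D}$, so shrinking the domain of integration to $D_0$ lowers the integral. As written, $\partial_n\Phi(x_0)>0$ would hold for every $\Omega\geqslant0$ and the threshold would play no role. The correct statement is $\partial_n\Phi(x_0)<R\big(I_1(\lambda R)K_1(\lambda R)-\Omega\big)\leqslant0$.

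Second, for part (i) your sketch contains placeholders and never reaches a usable identity. The paper does not rederive the argument; it applies \cite[Thm. 4.2]{GPSY20} directly to $g(r)=-\frac{1}{2\pi}K_0(\lambda r)$. It checks $0<g'(r)=\frac{\lambda}{2\pi}K_1(\lambda r)\leqslant\frac{1}{2\pi r}$, where the upper bound follows from $\lambda rK_1(\lambda r)<1$. You only invoke monotonicity of the kernel and omit this bound.
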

\begin{rem}Let us make the following remarks.
	\begin{enumerate} 
    \item Observe that the stationary simply-connected patches must be discs. This justifies the need of working within the doubly-connected framework for obtaining the non-trivial stationary states of Theorem \ref{thm stationary Vstates QGSW}. 
		\item Note that all sets $D_0\subset\mathbb{R}^2$ with area $\pi$ must have $R\geqslant 1.$ In this case we have $I_1(\lambda R)K_1(\lambda R)\leqslant I_1(\lambda) K_1(\lambda).$ Thus Theorem \ref{thm sym Vstates QGSW} immediately implies that all simply-connected rotating patches with area $\pi$ and $\Omega\geqslant I_1(\lambda)K_1(\lambda)$ must be a disc. The constant $I_1(\lambda)K_1(\lambda)$ is sharp, since there exist $\mathbf{m}$-fold patches bifurcating from a disc of radius $1$ at velocities $\Omega_{\mathbf{m}}(\lambda)=I_1(\lambda)K_1(\lambda)-I_{\mathbf{m}}(\lambda)K_{\mathbf{m}}(\lambda)$ which can get arbitrarily close to $I_1(\lambda)K_1(\lambda)$ as $\mathbf{m}\to\infty$ \cite[Thm. 5.1]{DHR19}.
         \item In Corollary \ref{cor mea diffsym}, we provide an upper bound for the symmetric difference between the unit disc $\mathbb{D}$ and the domain $D_0$ of a non-trivial V-state. This upper bound tends to zero as the angular velocity tends to $I_1(\lambda)K_1(\lambda)$ meaning that the non-trivial patch solutions becomes almost circular.
	\end{enumerate}
\end{rem}

Let us summarize our new contributions to the completeness of the following two tables. 

\begin{table}[!ht]
\centering
\renewcommand{\arraystretch}{1.5}
\begin{tabular}{cccc}
\hline
& Euler & $(\mathrm{gSQG})_{\alpha}$ & $(\mathrm{QGSW})_{\lambda}$ \\
\hline

$\Omega=0$
& Rigidity \cite{GPSY20}
& \cite{G19}
& \textbf{Theorem \ref{thm stationary Vstates QGSW}}
\\
\hline

$\Omega\neq0$
& \cite{HHMV16}
& \cite{HHH16,R17}
& \cite{R21}
\\
\hline
\end{tabular}
\caption{Existence of $\mathbf{m}$-fold V-states bifurcating from the annulus $\mathbb{A}_b$.}
\end{table}

\begin{table}[!ht]
\centering
\renewcommand{\arraystretch}{1.6}
\begin{tabular}{cccc}
\hline
& Euler & $(\mathrm{gSQG})_{\alpha}$ & $(\mathrm{QGSW})_{\lambda}$ \\
\hline
Rigidity for
$\Omega < 0$
& \cite{H15}
& \cite{GPSY20}
& \textbf{Theorem \ref{thm sym Vstates QGSW}-$(i)$-$(a)$}
\\
\hline
Rigidity for
$\Omega=0$
& \cite{F00}
& \cite{GPSY20}
& \textbf{Theorem \ref{thm sym Vstates QGSW}-$(i)$-$(b)$} 
\\
\hline
Rigidity for
$\Omega \geqslant \overline{\Omega}$
&  \makecell{\cite{H15,GPSY20}\vspace{0.1cm}\\
$\overline{\Omega}=\frac{1}{2}$}
&  \makecell{\vspace{0.1cm}\cite{GPSY20}\\
$\overline{\Omega}=2^{\alpha-1} \frac{\Gamma(1-\alpha)}{\Gamma\left(1-\frac{\alpha}{2}\right)^2} \frac{\Gamma\left(1+\frac{\alpha}{2}\right)}{\Gamma\left(2-\frac{\alpha}{2}\right)}$}
& \makecell{\textbf{Theorem \ref{thm sym Vstates QGSW}-$(ii)$}\vspace{0.1cm}\\
$\overline{\Omega}=I_1(\lambda)K_1(\lambda)$} 
\\
\hline
\makecell{Non-trivial V-states for\\
$0<\Omega<\overline{\Omega}$}
& \cite{B82,HMV13}
& \cite{CCG16,HH15}
& \cite{DHR19}
\\
\hline

\end{tabular}
\caption{Rigidity and flexibility results for simply-connected V-states.}
\end{table}

\newpage\subsection{Ideas of the proofs}
To improve readability of the manuscript, let us expose the main steps in the proofs of both Theorem \ref{thm stationary Vstates QGSW} and Theorem \ref{thm sym Vstates QGSW}.\\

\noindent\ding{172} \textbf{Bifurcation analysis:} This is the content of Section \ref{sec:bif}. Let us fix the Rossby radius $\lambda\in(0,\infty)$ and an inner radius $b\in(0,1)$ of the annulus $\mathbb{A}_b$ introduced in \eqref{def:annulus}. Consider a doubly-connected vortex patch 
$$q_0=\mathbf{1}_{D_0},\qquad D_0\triangleq D_1\setminus\overline{D_2},$$
where $D_1$ and $D_2$ are simply-connected bounded domains with $\overline{D}_2\subset D_1.$
We assume that the domain $D_0$ is ``close enough'' to $\mathbb{A}_b$, that is $D_1$ is a small perturbation of the unit disc $\mathbb{D}$, while $D_2$ is a small perturbation of the rescaled disc $b\cdot\mathbb{D}$. Since $D_1$ and $D_2$ are simply-connected and bounded, by the Riemman mapping theorem there exist two unique conformal applications 
$$\Phi_k:\mathbb{C}\setminus\overline{\mathbb{D}}\to\mathbb{C}\setminus\overline{D_k},\qquad\Phi_k(z)=b_k\,z+\sum_{n = 0}^{\infty}\frac{f_{k,n}}{z^{n-1}}\triangleq b_k\,z+f_k(z),\qquad k\in\{1,2\},\qquad b_1\triangleq1,\qquad b_2\triangleq b.$$
Assuming enough smoothness on the boundary of $D_0$, the conformal mappings $\Phi_1,\Phi_2$ extend smoothly to the boundary $\mathbb{T}\triangleq\partial\mathbb{D}.$ In this setting, following \cite{R21}, the contour dynamics equation \eqref{CDE} is reduced to a set of two equations
$$G\triangleq(G_1,G_2)=0,$$ 
where
$$\forall\,w\in\mathbb{T},\quad G_j\left(\lambda,b,f_1,f_2\right)(w)\triangleq\operatorname{Im}\left\{\left[S\left(\lambda,\Phi_2, \Phi_j\right)(w)-S\left(\lambda,\Phi_1,\Phi_j\right)(w)\right]\overline{w\Phi_j'(w)}\right\}.$$
Here, for $(j, k) \in \{1, 2\}^2$, 
$$S\left(\lambda, \Phi_j, \Phi_k\right)(w) \triangleq \fint_{\mathbb{T}} \Phi_j'(\tau) K_0\left(\lambda \left|\Phi_k(w) - \Phi_j(\tau)\right|\right) \, d \tau.$$
Hence, finding a non trivial, doubly-connected, stationary patch sufficiently ``close'' to the annulus for the system \eqref{QGSW:eq} is equivalent to finding a non trivial solution of the nonlinear functional $G.$
Note that, no matters the model (i.e. the value of $\lambda \in (0,\infty)$) and the thickness (i.e. the choice of $b\in(0,1)$), the annulus $\mathbb{A}_b$ constitutes a trivial solution to \eqref{QGSW:eq} that is
$$G(\lambda,b,0,0)=0.$$
This family of trivial stationary solutions is the starting point for our bifurcation analysis. The construction of nontrivial branches of solutions is carried out by applying Crandall-Rabinowitz theorem, whose statement is recalled in Theorem \ref{Crandall-Rabinowitz theorem}. The first step is to provide the functional framework where to run the analysis. We choose to work within the classical Hölderian regularity in order the exploit the computations already carried out in \cite{R21}. We look for stationary solutions that are $\mathbf{m}$-fold symmetric, namely invariant under the action of the dihedral group of order $\mathbf{m}$. We encode this symmetry in the functional spaces to which $f_1$ and $f_2$ belong, by requiring that their Fourier expansions have real coefficients and involve only modes of the form $n \mathbf{m}-1$. The precise definition of the working function spaces is given in Subsection \ref{sub_fun_sp}. Then, in Proposition \ref{reg G and lin op}, we remind that the nonlinear functional $G$ is well-defined and of class $C^1$ with respect to these spaces. We also give the expression of the linearized operator with respect to $f_1$ and $f_2$ at the trivial solutions
$$\forall\, w\in\mathbb{T},\quad DG(\lambda,b,0,0)\left[h_1, h_2\right](w)=\sum_{n=0}^{\infty}n\mathbf{m} M_{n\mathbf{m}}(\lambda,b) \begin{pmatrix}
		h_{1,n}\\
		h_{2,n}
\end{pmatrix}\textnormal{Im}\big(w^{n\mathbf{m}}\big),$$
where
\begin{equation*}
			M_{n}(\lambda,b)\triangleq \begin{pmatrix}
				\Omega_{n}(\lambda)-b\Lambda_{1}(\lambda,b) & b\Lambda_{n}(\lambda,b)\\
				-\Lambda_{n}(\lambda,b) & \Lambda_{1}(\lambda,b)-b\Omega_{n}(\lambda b)
			\end{pmatrix},
\end{equation*}
with $\Omega_n(\lambda)$ as in \eqref{bifpt:SC} and $\Lambda_n(\lambda,b)$ as in \eqref{def_lambda_m_intro}. A sufficient condition to get a bifurcation point is the non-invertibility of the linearized operator $DG(\lambda,b,0,0)$. To this aim, we introduce
$$\mathcal{D}_n(\lambda, b) \triangleq \det\big(M_n(\lambda, b)\big)=\Big[\Lambda_{1}(\lambda,b)-b\Omega_{n}(\lambda b)\Big]\Big[\Omega_{n}(\lambda)-b\Lambda_{1}(\lambda,b)\Big]+b\Lambda_{n}^2(\lambda,b)$$ 
and study its roots, which correspond to the values of the parameters for which the $n$-th Fourier mode contributes to the kernel of the linearized operator. These zeros constitute the natural candidates for bifurcation points. We analyze this problem from two complementary perspectives. In Section \ref{sec:bif_b}, we fix the Rossby radius $\lambda\in(0,\infty)$ and regard the inner radius $b\in(0,1)$ as bifurcation parameter. In Section \ref{sec:bif_lambda}, we reverse the roles of the parameters and fix $b$, using $\lambda$ as bifurcation parameter.\\

$\diamond$ \textit{The $b$-bifurcation:} It is performed in Section \ref{sec:bif_b}. We fix $\lambda\in(0,\infty)$. By exploiting the asymptotic behavior in $n$ at the end points of the interval $(0, 1)$, we establish in Lemma \ref{lem:cvDnb} a sign-changing property of the continuous function $b\mapsto \mathcal{D}_n(\lambda,b)$ that guarantees the existence, for $n$ large enough, of at least one root $b_{n,\lambda}\in(0,1)$ to the equation $\mathcal{D}_{n}(\lambda,b)=0$. Then, under a suitable renormalization analysis, we show in Lemma \ref{lem:accu1} that all such roots must accumulate to the limiting configuration $b=1$ as $n\to\infty$. A crucial step is then to derive a refined asymptotic description of their convergence. We demonstrate in Lemma \ref{lem:asybn} that any such root satisfies an expansion of the form
\begin{equation} \label{bnasy}
    b_{n, \lambda} \underset{n\to \infty}{=} 1 - \frac{\beta(\lambda)}{n} + O\left(\frac{1}{n^2}\right),
\end{equation}
for an explicitly characterized coefficient $\beta(\lambda) > 0$ (see Remark \ref{rem_beta_lambda}). The derivation of this asymptotic formula, that is fundamental for the following analysis, constitutes one of the most challenging part of the paper.
Indeed, the expression $0=\mathcal{D}_{n}\left(\lambda, b_{n, \lambda} \right)$ involves modified Bessel functions depending on the large parameter $n$ simultaneously through their order and their argument. As a consequence, a standard Taylor expansion around $b=1$, that is the limiting value of $b_{n,\lambda}$, breaks down. The reason is that the $k$-th order derivative in $b$ of the term $\Lambda_n(\lambda,b)$, appearing in the expression of $\mathcal{D}_n(\lambda,b)$, grows in a polynomial way with $n$ (see equation \eqref{svi:diffkLbd}), compensating the small factor $\left(1-b_{n,\lambda}\right)^k$. In other words, all the terms of the Taylor expansion of $\Lambda_n\left(\lambda,b_{n, \lambda}\right)$ (the ``bad'' part of $\mathcal{D}_{n})$ around $1$ contribute at the same scale, implying that every order must be summed and any truncation of the Taylor expansion is not valid. To overcome this problem, we isolate the growing part in the Taylor integral formula and study the corresponding convergence.

Once the location of the roots is understood, we turn to their qualitative properties. Exploiting the crucial asymtotic \eqref{bnasy}, we prove in Lemma \ref{lem:asyan} that $\partial_b \mathcal{D}_n\left(\lambda,b_{n, \lambda}\right)>0$ for all sufficiently large $n$ depending on $\lambda$. This positivity property being valid for any root, combined with some suitable global estimates, yields the uniqueness and simplicity of the root of $b_{n,\lambda}$ in the interval $(0,1)$. The formula \eqref{bnasy} also provides the asymtotic monotonicity of the sequence of the zeros.

We are now in the position perform the bifurcation analysis in Proposition \ref{prop:CRb}. We fix a symmetry parameter $\mathbf{m}$ large enough and choose as potential bifurcation point $b=b_{\mathbf{m},\lambda}$. The uniqueness and simplicity of the zero $b_{\mathbf{m},\lambda}$ together with its strict asymptotic monotonicity as $\mathbf{m}\to\infty$ imply that $b_{\mathbf{m},\lambda}$ cannot be a zero of $\mathcal{D}_n(\lambda,\cdot)$ for any $n\not=\mathbf{m}$, meaning that no resonance between different harmonics can occur at the candidate bifurcation point. Therefore, the kernel of the linearized operator $DG\left(\lambda, b_{\mathbf{m}, \lambda}, 0, 0\right)$ is exactly one-dimensional. Since the linearized operator is Fredholm of index $0$ (see Proposition \ref{prop-fredholmness}), it follows that its range has codimension one. All the hypotheses required by the Crandall-Rabinowitz theorem are verified, except for the transversality condition. The remainder of the analysis is devoted to proving this last non-degeneracy condition.

We first describe the range by duality following the argument developed in \cite{R23} in the scalar case and then extended to matricial situations in \cite{GHR23,HHRZ25,R25,R26}. This allows to translate the transversality condition into checking that a scalar product does not vanish. At this stage, once more, the higher-order asymptotic expansion \eqref{bnasy} of the root $b_{\mathbf{m},\lambda}$ becomes the key ingredient of the argument. In fact, if one only retains the leading-order asymptotic behavior corresponding to the limit $\mathbf{m} \to \infty$, the principal contribution to the transversality expression vanishes. The underlying reason is that the generator of the kernel tends to $(0,0)$ as $\mathbf{m}\to\infty$. Thus, the limiting problem is unable to determine whether the transversality condition is satisfied. To resolve this issue, one must go beyond the leading-order approximation and incorporate the higher-order correction terms arising in the expansion \eqref{bnasy} of $b_{\mathbf{m},\lambda}$. These contributions, together with a lower bound on $\beta(\lambda)$, produce the first non-vanishing term in the transversality expression yielding the desired result.\\

$\diamond$ \textit{The $\lambda$-bifurcation:} It is performed in Section \ref{sec:bif_lambda}. We fix $b\in(0,1).$ The bifurcation with respect to $\lambda$ follows a similar strategy to the one presented above. In Proposition \ref{prop seq lbd}, we first obtain the existence, for $n$ large enough, of a zero by a sign-changing property of the function $\lambda\mapsto\mathcal{D}_{n}(\lambda,b)$. Then, we show that these roots accumulate to $0$ as $n\to\infty$ following the asymptotic expansion
\begin{equation}\label{lbdnasy}
    \lambda_{n,b}\underset{n\to\infty}{=}\frac{2}{\sqrt{\left(1-b^2\right)n\log(n)}}\left(1-\frac{\log\left(\log(n)\right)}{2\log(n)}+O\left(\frac{1}{\log(n)}\right)\right).
\end{equation}
Such asymptotic law comes from the expansion of the determinant $\mathcal{D}_{n}(\lambda,b)$ as $\lambda\to0$, and is in particular driven by the contribution of $\Lambda_1(\lambda,b)$. Indeed, the leading-order balance is governed by
$$\lambda_{n,b}^2 \log\left(\lambda_{n,b}^2\right) \underset{n\to\infty}{\sim}- \frac{4}{\left(1-b^2\right)n}\cdot$$
Once we have this asymptotic relation, we compute $\partial_\lambda \mathcal{D}_n\left(\lambda_{n, b}, b\right)$ and prove that it is strictly negative for $n$ large enough. This implies the uniqueness and simplicity of the zero $\lambda_{n,b}$. Coming back to the expansion \eqref{lbdnasy}, we also get the strict asymptotic monotonicity. In Proposition \ref{prop:CRlbd}, we show that the spectral assumptions appearing in the statement of the Crandall-Rabinowitz Theorem \ref{Crandall-Rabinowitz theorem} hold true, due to the Fredholmness of the linearized operator and the properties of the sequence $\left(\lambda_{n, b}\right)_{n}$. On the other hand, in order to check the transersality condition, we make a new use of the refined asymptotic \eqref{lbdnasy}.\\

\noindent\ding{173} \textbf{Rigidity Theorem for simply-connected patches:} In Section \ref{section_rigidity} we prove Theorem \ref{thm sym Vstates QGSW} establishing rigidity results for simply-connected uniformly rotating vortex patches for QGSW equations \eqref{QGSW:eq}. The symmetry result for non-positive angular velocities (Theorem \ref{thm sym Vstates QGSW}-$(i)$) follows by verifying that the quasi-geostrophic shallow-water Green kernel satisfies the structural assumption of \cite[Thm. 4.2]{GPSY20}. To prove the disc rigidity for large enough angular velocities (Theorem \ref{thm sym Vstates QGSW}-$(ii)$), we introduce the defect set $U\triangleq (R\cdot\mathbb{D})\setminus D_0$ and rewrite the boundary equation for V-states in the form
$$\mathbf{1}_U \ast\mathcal{K}_{\lambda}=\mathbf{1}_{R\cdot\mathbb{D}}\ast\mathcal{K}_{\lambda}-\frac{\Omega}{2}|\cdot|^2+C, \quad \textnormal{in } \partial D_0,\qquad\mathcal{K}_{\lambda}\triangleq-\frac{1}{2\pi}K_0(\lambda|\cdot|).$$
A crucial step consists in deriving the explicit expression of the potential generated by a disc $\mathbf{1}_{R\cdot\mathbb{D}}\ast\mathcal{K}_{\lambda}$ via Beltrami's addition formula \eqref{Beltrami's summation formula}, the integral identities \eqref{Bessel and anti-derivatives} and the Wronskian relation \eqref{wronskian}. This representation, together with the monotonicity of $x \mapsto \frac{I_1(x)}{x}$, yields that the function $\mathbf{1}_{R\cdot\mathbb{D}}\ast\mathcal{K}_{\lambda}-\frac{\Omega}{2}|\cdot|^2$ is non-decreasing with respect to $|x|$ on $(0,R]$ provided that $\Omega\geqslant I_1(\lambda R)K_1(\lambda R)$. On the other hand, differentiating the defect potential $\mathbf{1}_U\ast\mathcal{K}_{\lambda}$ and using the identity \eqref{Bessel derivatives}, we prove its strict superharmonicity within $D_0$. By the maximum principle, the defect potential attains its minimum on $\overline{D_0}$ at a boundary point $x_0 \in \partial D_0$. Consequently, the radial derivative at this point satisfies
$$\nabla \left[\mathbf{1}_{U}\ast\mathcal{K}_{\lambda}\right]\left(x_0\right) \cdot x_0 \leqslant 0.$$
On the other hand, since $U \subset R\cdot\mathbb{D}$ and $\left|x_0\right| = R$, we get, for every $y \in U$, $\left(x_0 - y\right) \cdot x_0 > 0$ and so, by \eqref{symmetry Bessel}, 
$$\nabla\left[\mathbf{1}_{U}\ast\mathcal{K}_{\lambda}\right]\left(x_0\right)\cdot x_0>0,$$ 
which yields a contradiction. It follows that $U=\varnothing$ and hence $D_0=R\cdot\mathbb{D}$.\\

\noindent\textbf{Acknowledgements:} 

\smallskip
V.B. is  supported by the FPI grant from the Spanish Government PREP2022-000038, and partially  by the AEI project PID2022-140494NA-I00 (Spain).

C.G. is supported by RYC2022-035967-I (MCIU/AEI/10.13039/501100011033 and
FSE+), and partially by Grants PID2022-140494NA-I00 and PID2022-137228OB-I00 funded
by MCIN/AEI/10.13039/5011000 11033/FE\\
DER, UE,  by the RED2024-153842-T, funded by MCIN/AEI /10.13039/501100011033,  by Grant C-EXP-265-UGR23 funded by
Consejeria de Universidad, Investigacion e Innovacion \& ERDF/EU Andalusia Program, and
by Modeling Nature Research Unit, project QUAL21-011. Proyecto realizado con la Beca Leonardo a
Investigadores y Creadores Culturales 2024 de la Fundaci\'on BBVA.

E.R. is supported by the ERC STARTING GRANT 2021 ``Hamiltonian Dynamics, Normal Forms and Water Waves'' (HamDyWWa), Project Number: 101039762. E.R. is also supported by INdAM - GNAMPA Project ``(In)-stability in Fluid Mechanics'', CUP E53C25002010001.

\section{Bifurcation analysis}\label{sec:bif}
This section is devoted to the proof of Theorem \ref{thm stationary Vstates QGSW}, namely the construction of doubly-connected stationary $\mathbf{m}$-fold vortex patches, where $\mathbf{m}$ is a large integer, for the quasi-geostrophic shallow-water equations. To this end, we employ local bifurcation argument. We begin by introducing the nonlinear functional describing the problem and the associated function spaces on which the bifurcation analysis is carried out. The functional depends on two external real parameters: the inner radius $b$ of the reference annulus and the Rossby deformation length $\lambda$. The bifurcation analysis is performed in two stages, according to whether the bifurcation parameter is chosen to be $b$ or $\lambda$.
\subsection{Functional analysis}
We start with the functional framework in which we apply the bifurcation theory. We begin with the definition of Hölder regularity spaces, then we continue with the presentation of the functional of interest, its regularity with respect to the function spaces and its linearization at the trivial radial state. Finally, we expose the Fredholm nature of the linearized operator at the equilibrium state.
\subsubsection{Functions spaces} \label{sub_fun_sp}
Throughout this work, it is convenient to identify a $2\pi$-periodic function $g:\mathbb{R}\rightarrow\mathbb{C}$ with a function defined on the unit circle through the change of variables $w=e^{\ii\vartheta}.$ More precisely, given a continuous function on the unit circle $f:\mathbb{T}\rightarrow\mathbb{R}^{2}$, we associate to it a $2\pi$-
periodic function $g:\mathbb{R}\rightarrow\mathbb{R}^{2}$ by setting
$$f(w)=g(\vartheta),\qquad w=e^{\ii\vartheta}.$$
Whenever $f$ is sufficiently smooth, differentiation with respect to $w$ is related to differentiation with respect to $\vartheta$ through
$$f'(w)\triangleq \frac{df}{dw}=-\ii e^{-\ii\vartheta}g'(\vartheta).$$
Since the operators $\frac{d}{dw}$ and $\frac{d}{d\vartheta}$ differ only by a smooth multiplicative factor of unit modulus, we shall henceforth use $\frac{d}{dw}$ instead of $\frac{d}{d\vartheta}$, which turns out to be more convenient for our computations.
Moreover, if $f$ has real Fourier coefficients and is of class $C^{1}$
then we can easily check that
\begin{equation*}
\left(\overline{f}\right)'(w)=-\frac{\overline{f'(w)}}{w^{2}}\cdot
\end{equation*}
Now, we shall recall the definition of Hölder spaces on the unit circle.
\begin{defin}
Let $\alpha\in(0,1).$
\begin{enumerate}[label=(\roman*)]
\item We denote by $C^{\alpha}(\mathbb{T})$ the space of continuous functions $f$ such that
$$\| f\|_{C^{\alpha}(\mathbb{T})}\triangleq \| f\|_{L^{\infty}(\mathbb{T})}+\sup_{\underset{\tau\neq w}{(\tau,w)\in\mathbb{T}^{2}}}\frac{|f(\tau)-f(w)|}{|\tau-w|^{\alpha}}<+\infty.$$
\item We denote by $C^{1+\alpha}(\mathbb{T})$ the space of $C^{1}$ functions with $\alpha$-Hölder continuous derivative :
$$\| f\|_{C^{1+\alpha}(\mathbb{T})}\triangleq \| f\|_{L^{\infty}(\mathbb{T})}+\Big\| \frac{df}{dw}\Big\|_{C^{\alpha}(\mathbb{T})}<+\infty.$$
\end{enumerate}
\end{defin}
\noindent For $\alpha\in(0,1)$ and $\mathbf{m}\in\mathbb{N}^*,$ we set
$$X_{\mathbf{m}}^{1+\alpha}\triangleq X_{1,\mathbf{m}}^{1+\alpha}\times X_{1,\mathbf{m}}^{1+\alpha}\quad\mbox{ with }\quad X_{1,\mathbf{m}}^{1+\alpha}\triangleq \left\lbrace f\in C^{1+\alpha}(\mathbb{T})\quad\textnormal{s.t.}\quad\forall w\in\mathbb{T},\,f(w)=\sum_{n=1}^{+\infty}f_{n}\overline{w}^{\mathbf{m}n-1},\quad f_n\in\mathbb{R}\right\rbrace,$$
and
$$Y_{\mathbf{m}}^{\alpha}\triangleq Y_{1,\mathbf{m}}^{\alpha}\times Y_{1,\mathbf{m}}^{\alpha}\quad\mbox{ with }\quad Y_{1,\mathbf{m}}^{\alpha}\triangleq \left\lbrace g\in C^{\alpha}(\mathbb{T})\quad\textnormal{s.t.}\quad\forall w\in\mathbb{T},\,g(w)=\sum_{n=1}^{+\infty}g_{n}e_{\mathbf{m}n}(w),\quad g_n\in\mathbb{R}\right\rbrace,$$
with
$$e_{n}(w)\triangleq \mbox{Im}(w^{n}).$$
The parameter ${\bf m}$ will describe the $\mathbf{m}$-fold symmetry of the bifurcated domains, and the condition that $f_n\in\mathbb{R}$ will imply the symmetry with respect with the real axis. The space $X_{\mathbf{m}}^{1+\alpha}$ (resp. $Y_{\mathbf{m}}^{\alpha}$) is equipped with the strong product topology of $C^{1+\alpha}(\mathbb{T})\times C^{1+\alpha}(\mathbb{T})$ (resp. $C^{\alpha}(\mathbb{T})\times C^{\alpha}(\mathbb{T})$). Given $r>0,$ we denote the ball of radius $r$ in $X_{1,\mathbf{m}}^{1+\alpha}$ as
$$B_{r,\mathbf{m}}^{1+\alpha}\triangleq \big\{ f\in X_{1,\mathbf{m}}^{1+\alpha}\quad\textnormal{s.t.}\quad\| f\|_{C^{1+\alpha}(\mathbb{T})}<r\big\}.$$
We shall consider the following pre-Hilbertian structure on $Y_{\mathbf{m}}^{\alpha}.$ For $f=(f_1,f_2)$ and $g=(g_1,g_2)$ in $Y_{\mathbf{m}}^{\alpha}$ in the form
$$\forall i\in\{1,2\},\qquad f_{i}=\sum_{n=1}^{\infty}f_{i,n}e_{n\mathbf{m}},\quad g_{i}=\sum_{n=1}^{\infty}g_{i, n}e_{n\mathbf{m}},\qquad f_{i,n},g_{i, n}\in\mathbb{R},$$
we define
\begin{equation}\label{scalar product}
	\big\langle f,g\big\rangle\triangleq\sum_{n=1}^{\infty}\Big(f_{1,n}\,g_{1,n}+f_{2,n}\,g_{2,n}\Big).
\end{equation}
Finally, for a given continuous function $\varphi:\mathbb{T}\to\mathbb{R},$ we define the complex integral as follows
$$\fint_{\mathbb{T}} \varphi(w) \, dw \triangleq \frac{1}{2\ii\pi} \int_{\mathbb{T}} \varphi(w) \, d w \triangleq \frac{1}{2\pi} \int_{0}^{2\pi} \varphi\left(e^{\ii\vartheta}\right) e^{\ii \vartheta} \, d \vartheta.$$
\subsubsection{Functional of interest and linearzation at annuli}
Fix $(\lambda,b)\in(0,\infty)\times(0,1)$. We take an initial doubly-connected bounded domain $D_0$ near the annulus $\mathbb{A}_{b}$ introduced in \eqref{def:annulus}. We can write
$$D_0=D_1\setminus\overline{D_2},$$
where $D_1$ and $D_2$ are simply-connected bounded domains with $\overline{D_2}\subset D_1$ and
$$D_1\approx\mathbb{D},\qquad D_2\approx b\cdot\mathbb{D}.$$
Here we denoted by $\mathbb{D}$ the unit disc. Next consider the external Riemann mappings 
$$\Phi_1:\mathbb{C}\setminus\overline{\mathbb{D}}\to\mathbb{C}\setminus\overline{D_1}\quad\textnormal{and}\quad\Phi_2:\mathbb{C}\setminus\overline{\mathbb{D}}\to\mathbb{C}\setminus\overline{D_2},$$
given by
$$\Phi_1(z)=z+f_1(z)=z+\sum_{n=1}^{\infty}\frac{f_{1, n}}{z^{n-1}},\qquad\Phi_2(z)=bz+f_2(z)=bz+\sum_{n=1}^{\infty}\frac{f_{2, n}}{z^{n-1}}\cdot$$
The coefficients $f_{i, n}$ ($i\in\{1,2\}$) are a priori complex but will be chosen real. This amounts to restricting our attention to simply-connected domains that are symmetric with respect to the real axis. Since our goal is to construct $\mathbf{m}$-fold symmetric solutions, such domains necessarily possess at least one axis of symmetry. The rotational invariance of the problem allows any such axis to be mapped onto the real axis through a suitable rotation, thereby preserving the full generality of the argumentation. The applications $\Phi_1$ and $\Phi_2$ are holomorphic. If the domains $D_1$ and $D_2$ are smooth enough, by virtue of Kellogg-Warschawski theorem (see \cite{W35} and \cite[Thm. 3.6]{P92}), these applications admit a continuous extension of class $C^{1+\alpha}$ on $\mathbb{C}\setminus\mathbb{D}$ for $\alpha\in(0,1)$ and the Jordan curve $\Phi_i(\mathbb{T})$ is of class $C^{1+\alpha}.$ Recall that $\mathbb{T}=\partial\mathbb{D}$ stands for the unit circle, agreeing with the notations of the previous subsection. In our analysis, we consider the restriction of $f_1,f_2$ to $\mathbb{T}$ (keeping the same notations). The domain $D_0$ is taken $\mathbf{m}$-fold and smooth enough so that $(f_1,f_2)\in X_{\bf m}^{1+\alpha}$ in accordance with the notations of the previous subsection. Note that the $\mathbf{m}$-fold symmetry imposes the coefficients $f_{k,n}$ to be non-zero only for $n$ in the lattice $\mathbf{m}\mathbb{Z}.$\\

With these notations in hand, we can now introduce the functional of main interest in this study. A stationary vortex patch can be seen as a V-state rotating with angular velocity $\Omega=0.$ Following \cite{R21}, the boundary equations of interest write
$$G(\lambda,b,f_{1},f_{2})=0,$$
where $G=(G_{1},G_{2})$ is defined by
\begin{equation}\label{def:G}\forall j\in\{1,2\},\quad\forall w\in\mathbb{T},\quad G_{j}(\lambda,b,f_{1},f_{2})(w)\triangleq \mbox{Im}\left\lbrace\big[S(\lambda,\Phi_{2},\Phi_{j})(w)-S(\lambda,\Phi_{1},\Phi_{j})(w)\big]\overline{w}\overline{\Phi_{j}'(w)}\right\rbrace,\end{equation}
with
$$\forall(i,j)\in\{1,2\}^{2},\quad \forall w\in\mathbb{T},\quad S(\lambda,\Phi_{i},\Phi_{j})(w)\triangleq \fint_{\mathbb{T}}\Phi_{i}'(\tau)K_{0}\big(\lambda\left|\Phi_{j}(w)-\Phi_{i}(\tau)\right|\big)d\tau.$$
We shall now briefly discuss the regularity of the functional $G$ with respect to the above functional framework. By \cite[Prop. 2.1]{R21}, there exists $r>0$ such that the functional $$G(\lambda,b,\cdot,\cdot):B_{r,\mathbf{m}}^{1+\alpha}\times B_{r,\mathbf{m}}^{1+\alpha}\rightarrow Y_{\mathbf{m}}^{\alpha}$$ 
is well-defined and of class $C^{1}$. The regularity with respect to the parameter $\lambda\in(0,\infty)$ was partially investigated in \cite[Prop. 5.7]{DHR19} for the self-interaction terms associated with the boundaries. The interaction terms coupling the two boundaries involve only non-singular kernels and can therefore be handled in a straightforward manner. The regularity with respect to the parameter $b\in(0,1)$ can be established similarly. Indeed, $b$ either appears in the interaction terms between the two boundaries, where it is associated with smooth kernels, or in the self-interaction term of the inner interface, where it factors out and behaves as the parameter $\lambda$. The first point of the next proposition summarizes the previous discussion and reminds that the annuli are trivial solutions, see \cite[Lem. 2.1]{R21} for more details on this last property. The second point recalls the Fourier multiplier structure of the linearized operator at the annulus state and follows immediately from \cite[Prop. 3.1]{R21}.
\begin{prop}\label{reg G and lin op}
	The following assertions hold true.
	\begin{enumerate}
		\item There exists $r>0$ such that, for any $\alpha\in(0,1),$ the application 
        $$G:(0,\infty)\times(0,1)\times B_{r,\mathbf{m}}^{1+\alpha}\times B_{r,\mathbf{m}}^{1+\alpha}\rightarrow Y_{\mathbf{m}}^\alpha$$ 
        is well-defined and of class $C^1.$ In addition,
        $$\forall\lambda>0,\quad\forall b\in(0,1),\quad G(\lambda,b,0,0)=0.$$
		\item For any $(\lambda,b)\in(0,\infty)\times (0,1)$, for any $\left(h_1,h_2\right)\in X_{\mathbf{m}}^{1+\alpha}$, the linearized operator at the equilibrium state writes
		$$DG(\lambda,b,0,0)[h_{1},h_{2}]=\sum_{n=0}^{\infty}n\mathbf{m}M_{n\mathbf{m}}(\lambda,b)\begin{pmatrix}
			h_{1,n}\\
			h_{2,n}
		\end{pmatrix}e_{n\mathbf{m}},$$
		where $D\triangleq\partial_{(f_1,f_2)}$ and
		\begin{equation}\label{matrix Mn}
			M_{n}(\lambda,b)\triangleq \begin{pmatrix}
				\Omega_{n}(\lambda)-b\Lambda_{1}(\lambda,b) & b\Lambda_{n}(\lambda,b)\\
				-\Lambda_{n}(\lambda,b) & \Lambda_{1}(\lambda,b)-b\Omega_{n}(\lambda b)
			\end{pmatrix},
		\end{equation}
		with
		\begin{equation}\label{def:Lbd and Omg}
		    \Lambda_{n}(\lambda,b)\triangleq I_{n}(\lambda b)K_{n}(\lambda)\quad\mbox{ and }\quad\Omega_{n}(\lambda)\triangleq I_{1}(\lambda)K_{1}(\lambda)-I_{n}(\lambda)K_{n}(\lambda).
		\end{equation}
	\end{enumerate}
\end{prop}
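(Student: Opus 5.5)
The proof has two parts. The first handles the regularity of $G$ and the fact that annuli are trivial solutions. The second computes the linearized operator at the annulus as a Fourier multiplier.

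\textbf{Regularity and trivial solutions.} For the first assertion I will rely on \cite[Prop. 2.1]{R21}. It gives, for fixed $(\lambda,b)$, that $G(\lambda,b,\cdot,\cdot)$ is well defined and $C^1$ from $B_{r,\mathbf{m}}^{1+\alpha}\times B_{r,\mathbf{m}}^{1+\alpha}$ into $C^\alpha(\mathbb{T})^2$. What remains is threefold.

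First, the image must lie in $Y^\alpha_{\mathbf{m}}$. Real Fourier coefficients in $f_1,f_2$ make $G_j(\overline{w})=-G_j(w)$, so only sine modes appear. The $\mathbf{m}$-fold invariance $\Phi_k(e^{2\ii\pi/\mathbf{m}}z)=e^{2\ii\pi/\mathbf{m}}\Phi_k(z)$, combined with the change of variables $\tau\mapsto e^{2\ii\pi/\mathbf{m}}\tau$ in $S$, gives $G_j(e^{2\ii\pi/\mathbf{m}}w)=G_j(w)$. Hence only the modes $e_{n\mathbf{m}}$ survive.

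Second, I need joint $C^1$ dependence on $(\lambda,b)$. I will split $S(\lambda,\Phi_i,\Phi_j)$ into two kinds of terms.
\begin{itemize}
\item Self-interaction terms ($i=j$). Write $K_0(x)=-\log(x/2)I_0(x)+\text{(entire)}$. The logarithmic part is handled as in \cite[Prop. 5.7]{DHR19}. The derivative in $\lambda$ produces $\partial_\lambda K_0(\lambda r)=-rK_1(\lambda r)$, which is bounded because $rK_1(\lambda r)\to 1/\lambda$ as $r\to 0$. For $\Phi_2$, the parameter $b$ enters through $\Phi_2=b(\mathrm{Id}+f_2/b)$, so it can be absorbed into $\lambda b$.
\item Cross terms ($i\neq j$). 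Here $|\Phi_j(w)-\Phi_i(\tau)|\geqslant (1-b)/2>0$ for small $r$, so the kernel is smooth and differentiation under the integral is immediate.
\end{itemize}

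Third, $G(\lambda,b,0,0)=0$. When $\Phi_1=\mathrm{Id}$ and $\Phi_2=b\,\mathrm{Id}$, rotational invariance gives $S(\lambda,\Phi_i,\Phi_j)(w)=c_{ij}\,\overline{w}$ with $c_{ij}$ real. Then $S\cdot\overline{w\Phi_j'}$ is real times $|w|^{-2}$, which is real, so its imaginary part vanishes.

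\textbf{Linearization.} For the second assertion I will compute the Gâteaux derivative $\frac{d}{dt}G(\lambda,b,th_1,th_2)|_{t=0}$, taking $h_k=\sum_n h_{k,n}\overline{w}^{n\mathbf{m}-1}$. It has two kinds of contributions.
\begin{itemize}
\item Diagonal contributions come from varying the self-interaction. They reproduce the simply-connected computation of \cite{DHR19}, which produces $I_1K_1-I_nK_n$ evaluated at $\lambda$ for the outer boundary and at $\lambda b$ (times the factor $b$) for the inner boundary.
\item Off-diagonal contributions come from the cross kernel $K_0(\lambda|w-b\tau|)$. I will expand it with Beltrami's (Graf's) addition formula,
$$K_0(\lambda|w-b\tau|)=\sum_{k\in\mathbb{Z}}I_k(\lambda b)K_k(\lambda)\,(\tau\overline{w})^k .$$
This produces the coefficients $\Lambda_n=I_n(\lambda b)K_n(\lambda)$ off the diagonal, together with the $\Lambda_1$ shifts on the diagonal, the latter coming from the variation of $\overline{\Phi_j'}$ and of the evaluation point.
\end{itemize}
Collecting the Fourier mode $n\mathbf{m}$ gives the factor $n\mathbf{m}M_{n\mathbf{m}}$.

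\textbf{Main obstacle.} The step I expect to be delicate is the bookkeeping of signs and factors of $b$ in the cross terms. I plan to simply cross-check the result against \cite[Prop. 3.1]{R21} with $\Omega=0$, since that reference already contains exactly this matrix.
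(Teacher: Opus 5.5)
Your route is the paper's. You cite \cite[Prop.~2.1]{R21} for the $C^1$ dependence on $(f_1,f_2)$. For the parameters you use \cite[Prop.~5.7]{DHR19} plus smoothness of the cross kernels, with $b$ absorbed into $\lambda b$ in the inner self-interaction. For the trivial solutions and the Fourier-multiplier form of the linearization you defer to \cite[Lem.~2.1, Prop.~3.1]{R21}. Your explicit check that $G$ lands in $Y^\alpha_{\mathbf{m}}$ is correct and is something the paper leaves implicit: oddness comes from the real coefficients, and $\mathbf{m}$-fold invariance from $S(\omega w)=\omega S(w)$.

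One step is wrong as written: your direct proof of $G(\lambda,b,0,0)=0$. At the annulus, $S(\lambda,\Phi_i,\Phi_j)(w)$ is a real multiple of $w$, not of $\overline{w}$. The reason is that $\fint_{\mathbb{T}}\varphi(\tau)\,d\tau=\frac{1}{2\pi}\int_0^{2\pi}\varphi(e^{\ii\theta})e^{\ii\theta}\,d\theta$ carries the factor $e^{\ii\theta}$, so the substitution $\tau\mapsto w\tau$ produces a factor $w$. The equivariance $S(\omega w)=\omega S(w)$, which you used yourself, also forces this. Explicitly, Beltrami's formula gives:
\begin{itemize}
\item $S(\lambda,\mathrm{Id},\mathrm{Id})(w)=wI_1(\lambda)K_1(\lambda)$;
\item $S(\lambda,b\,\mathrm{Id},\mathrm{Id})(w)=bwI_1(\lambda b)K_1(\lambda)$;
\item $S(\lambda,b\,\mathrm{Id},b\,\mathrm{Id})(w)=bwI_1(\lambda b)K_1(\lambda b)$;
\item $S(\lambda,\mathrm{Id},b\,\mathrm{Id})(w)=wI_1(\lambda b)K_1(\lambda)$.
\end{itemize}
With your formula, the bracket times $\overline{w}\,\overline{\Phi_j'(w)}$ equals $c\,b_j\overline{w}^{2}$ with $c\neq0$ by the strict inequalities $bI_1(\lambda b)<I_1(\lambda)$ and $bK_1(\lambda b)>K_1(\lambda)$. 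Note that $\overline{w}^2=w^{-2}$, not $|w|^{-2}$, so its imaginary part $-c\,b_j\sin(2\theta)$ does not vanish. With the correct formula the product is $c\,b_j|w|^2=c\,b_j\in\mathbb{R}$, which gives $G(\lambda,b,0,0)=0$.

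A minor further point concerns the size of $r$. Your separation bound $|\Phi_1(w)-\Phi_2(\tau)|\geqslant\frac{1-b}{2}$ needs $r\leqslant\frac{1-b}{4}$. Writing $\Phi_2=b(\mathrm{Id}+f_2/b)$ also needs $r$ small relative to $b$. So you obtain the first assertion with $r$ depending on $b$, locally uniformly, rather than a single $r$ for all $b\in(0,1)$. That is all the Crandall--Rabinowitz argument uses, and the paper's sketch is no more precise on this point.
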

\noindent In order to get non-trivial stationary solutions, we need a nontrivial kernel for the linearized operator. Then, we want to find
\begin{enumerate}[label=\textbullet]
\item for fixed $n\in\mathbb{N}^{*}$ and $\lambda\in(0,\infty)$, the number(s) $b\in(0,1)$ such that the matrix $M_{n}(\lambda,b)$ is singular;
\item for fixed $n\in\mathbb{N}^{*}$ and $b\in(0,1)$, the number(s) $\lambda\in(0,\infty)$ such that the matrix $M_{n}(\lambda,b)$ is singular.
\end{enumerate}
Then we hope to apply Crandall-Rabinowitz's Theorem to bifurcate from these points. Therefore, we are led to study the determinant function
\begin{equation}\label{def:Dn}
    \mathcal{D}_{n}(\lambda,b)\triangleq  \det\big(M_{n}(\lambda,b)\big)=\big[\Omega_{n}(\lambda)-b\Lambda_{1}(\lambda,b)\big]\big[\Lambda_{1}(\lambda,b)-b\Omega_{n}(\lambda b)\big]+b\Lambda_{n}^{2}(\lambda,b).
\end{equation}
We start by giving some elementary asymptotic behaviors of the determinant function.
\begin{lem}\label{lem:elemD}
The following properties hold true.
    \begin{enumerate}[label=(\roman*)]
        \item For every $n \in \mathbb{N}^*$, we have the following evaluations
        \begin{equation}\label{lim det}
	\mathcal{D}_{n}(\lambda,0)=\mathcal{D}_{n}(\lambda,1)=\lim_{\lambda\to\infty}\mathcal{D}_{n}(\lambda,b)=0,\qquad \mathcal{D}_{n}(0,b)=\frac{b}{4n^2}\Big[n(1-b^2)-1+b^{2n}\Big].
\end{equation}
\item We have the following uniform convergence
\begin{equation}\label{unifCV:D-2var}
    \lim_{n\rightarrow\infty}\sup_{(\lambda,b)\in[0,\infty)\times[0,1]}\Big|\mathcal{D}_{n}(\lambda,b)-\mathcal{D}_{\infty}(\lambda,b)\Big|=0,
\end{equation}
where we denote
\begin{equation}\label{def:Dinfty}
    \mathcal{D}_{\infty}(\lambda,b)\triangleq  \Lambda_{1}(\lambda,b)\big[I_{1}(\lambda)-bI_{1}(\lambda b)\big]\big[K_{1}(\lambda)-bK_{1}(\lambda b)\big].
\end{equation}
\item The asymptotic profile $\mathcal{D}_{\infty}$ satisfies
\begin{equation}\label{sign Dinfty}
	\forall\lambda>0,\quad\forall b\in(0,1),\quad\mathcal{D}_{\infty}(\lambda,b)<0
\end{equation}
and 
\begin{equation}\label{diff2Dinfty}
    \forall\lambda>0,\quad\partial_b^2\mathcal{D}_{\infty}(\lambda,1)=-2\lambda^2I_0(\lambda)K_0(\lambda)I_1(\lambda)K_1(\lambda).
\end{equation}
    \end{enumerate}
\end{lem}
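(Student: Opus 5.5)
We prove the three items in turn, using only the standard facts on modified Bessel functions gathered in Appendix~\ref{appendix Bessel}: the small-argument behaviour $I_n(x)\sim\frac{(x/2)^n}{n!}$, $K_n(x)\sim\frac{(n-1)!}{2}(2/x)^n$ for $n\geqslant1$; the large-argument behaviour $I_n(x)K_n(x)\to 0$ as $x\to\infty$; the Wronskian $I_n K_n'-I_n'K_n=-1/x$; and the uniform bound $0\leqslant I_n(x)K_n(x)\leqslant \frac{1}{2n}$ together with the decreasing monotonicity of $n\mapsto I_n(x)K_n(x)$.

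\textbf{Item (i).}
\begin{enumerate}[label=\textbullet]
\item \emph{At $b=0$.} Since $\Lambda_n(\lambda,0)=0$, the factor $\Lambda_1-b\Omega_n(\lambda b)$ vanishes, and $b\Lambda_n^2=0$. Hence $\mathcal{D}_n(\lambda,0)=0$.
\item \emph{At $b=1$.} We have $\Lambda_1(\lambda,1)=I_1K_1(\lambda)$, so the first factor equals $-I_nK_n(\lambda)$ and the second equals $I_nK_n(\lambda)$. Since $b\Lambda_n^2=(I_nK_n)^2$, the terms cancel and $\mathcal{D}_n(\lambda,1)=0$.
\item \emph{As $\lambda\to\infty$.} Every term is bounded by products of quantities $I_kK_k(\lambda)$, $I_kK_k(\lambda b)$ and $\Lambda_k(\lambda,b)\leqslant \sqrt{I_kK_k(\lambda b)\,I_kK_k(\lambda)}$. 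The last inequality follows from monotonicity of $I_k$ and of $K_k$; alternatively one uses $\Lambda_k\leqslant I_k(\lambda)K_k(\lambda)$, which holds since $I_k$ is increasing. All these quantities tend to $0$ as $\lambda\to\infty$ with $b$ fixed, so the limit is $0$.
\item \emph{At $\lambda=0$.} Passing to the limit with the small-argument asymptotics gives $I_kK_k(x)\to\frac1{2k}$, hence $\Omega_n(0)=\frac12-\frac1{2n}$ and $\Lambda_k(0,b)=\frac{b^k}{2k}$. Substituting,
\[
\mathcal{D}_n(0,b)=\Big(\tfrac{n-1}{2n}-\tfrac{b^2}{2}\Big)\Big(\tfrac b2-b\tfrac{n-1}{2n}\Big)+\tfrac{b^{2n+1}}{4n^2},
\]
and expanding yields the claimed formula.
\end{enumerate}

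\textbf{Item (ii).} Set $\varepsilon_n\triangleq\sup_{x\geqslant0}I_n(x)K_n(x)\leqslant\frac1{2n}\to0$. Replacing $\Omega_n(\cdot)$ by $I_1K_1(\cdot)$ and $\Lambda_n$ by $0$ changes $\mathcal{D}_n$ by at most $C\varepsilon_n$ uniformly in $(\lambda,b)$, because all factors are uniformly bounded by $\frac12$. What remains is
\[
\big[I_1K_1(\lambda)-bI_1(\lambda b)K_1(\lambda)\big]\big[I_1(\lambda b)K_1(\lambda)-bI_1(\lambda b)K_1(\lambda b)\big].
\]
Factoring $K_1(\lambda)$ from the first bracket and $I_1(\lambda b)$ from the second gives exactly $\mathcal{D}_\infty$. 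The only care needed is at $\lambda=0$ and $b=0$. There one uses the products $K_1(\lambda)I_1(\lambda b)$ and $bK_1(\lambda b)I_1(\lambda b)$, which remain bounded by $\frac12$, so the bound is genuinely uniform on the closed set.

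\textbf{Item (iii).} All three factors of $\mathcal{D}_\infty$ have definite signs.
\begin{enumerate}[label=\textbullet]
\item $\Lambda_1>0$.
\item $I_1(\lambda)-bI_1(\lambda b)>0$, since $I_1$ is increasing and $b<1$.
\item $K_1(\lambda)-bK_1(\lambda b)<0$. Indeed $x\mapsto xK_1(x)$ is strictly decreasing, since $(xK_1)'=-xK_0<0$. Hence $\lambda K_1(\lambda)<\lambda bK_1(\lambda b)$.
\end{enumerate}
The product is therefore negative, which proves \eqref{sign Dinfty}.

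For \eqref{diff2Dinfty}, write $\mathcal{D}_\infty=\Lambda_1\,A\,B$ with $A(1)=B(1)=0$. Then
\[
\partial_b^2\mathcal{D}_\infty(\lambda,1)=2\Lambda_1(\lambda,1)\,A'(1)\,B'(1).
\]
Using $(xI_1)'=xI_0$ and $(xK_1)'=-xK_0$ with $x=\lambda b$, we get $A'(1)=-\lambda I_0(\lambda)$ and $B'(1)=\lambda K_0(\lambda)$. This gives $-2\lambda^2I_0K_0I_1K_1(\lambda)$.

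The main obstacle is the uniformity in (ii) near the degenerate corners $\lambda\to0$ or $b\to0$. Even there, the uniform bound $I_nK_n\leqslant\frac1{2n}$ controls every discarded term, so this step is straightforward.
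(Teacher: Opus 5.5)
Your proof is correct and follows essentially the same route as the paper. You evaluate directly at $b\in\{0,1\}$, $\lambda=0$ and $\lambda\to\infty$ from the elementary Bessel asymptotics. You get the uniform convergence from $\sup_{x\geqslant0}I_n(x)K_n(x)=\frac{1}{2n}$, which also dominates $\Lambda_n$. You get the sign of $\mathcal{D}_\infty$ from the monotonicity of $I_1$ and of $x\mapsto xK_1(x)$, which is the paper's $\psi_\lambda$ argument. Your computation $\partial_b^2\mathcal{D}_\infty(\lambda,1)=2\Lambda_1(\lambda,1)A'(1)B'(1)$ correctly keeps the product-rule factor $2$, which the paper's intermediate display \eqref{d2D8} omits even though its final formula \eqref{diff2Dinfty} includes it.
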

\begin{rem}In the following, we give some comments about the function $\mathcal{D}_{\infty}(\lambda,b).$
\begin{enumerate}
    \item Note that $\mathcal{D}_{\infty}(\lambda,b)$ is the product of the isomorphism coefficients in \eqref{isoL12} below.
    \item We refer to Figures \ref{detlimb} and \ref{detlimlbd} for numerical representations of the limiting profiles $b\mapsto\mathcal{D}_{\infty}(\lambda,b)$ and $\lambda\mapsto\mathcal{D}_{\infty}(\lambda,b).$
\end{enumerate}
\end{rem}
\begin{proof}
 $(i)$ From the regularity of the modified Bessel functions, we deduce that the application $(\lambda,b)\mapsto \mathcal{D}_n(\lambda,b)=\det\big(M_n(\lambda,b)\big)$ is smooth (actually analytic) on $(0,\infty)\times(0,\infty)$ so in particular on $(0,\infty)\times(0,1].$ Using \eqref{asymptotic expansion of small argument}, we see that the above expression can also be prolongated by continuity for $b=0$ and $\lambda=0.$ In addition, the evaluations \eqref{lim det} follow from the asymptotics \eqref{asymptotic expansion of small argument}-\eqref{asymp large z}.\\
 $(ii)$ We can write
    \begin{equation}\label{decomp:Dn}
        \mathcal{D}_{n}(\lambda,b)=\mathcal{D}_{\infty}(\lambda,b)-I_n(\lambda)K_n(\lambda)\big[\Lambda_1(\lambda,b)-b\Omega_n(\lambda b)\big]+bI_n(\lambda b)K_n(\lambda b)\big[\Omega_n(\lambda)-b\Lambda_1(\lambda,b)\big]+b\Lambda_n^2(\lambda,b).
    \end{equation}
    First remark that, from \eqref{def:Lbd and Omg},
\begin{align*}
	\|\Omega_{n}-I_1K_1\|_{L^{\infty}([0,\infty))}&=\|I_nK_n\|_{L^{\infty}([0,\infty))}=\tfrac{1}{2n}\underset{n\to\infty}{\longrightarrow}0.
\end{align*}
Notice that the last equality is obtained from the decay property of $x\mapsto I_n(x)K_n(x)$ on $(0,\infty)$ together with the asymptotic \eqref{asymptotic expansion of small argument}. Similarly, from \eqref{def:Lbd and Omg} and the fact that $I_n$ is increasing, we deduce that
\begin{align*}
	\sup_{(\lambda,b)\in[0,\infty)\times[0,1]}|\Lambda_{n}(\lambda,b)|&\leqslant\|I_nK_n\|_{L^{\infty}([0,\infty))}=\tfrac{1}{2n}\underset{n\to\infty}{\longrightarrow}0.
\end{align*}
From what precedes, we have that $(\Omega_n)_{n\in\mathbb{N}^*}$ and $(\Lambda_n)_{n\in\mathbb{N}^*}$ are two sequences of uniformly bounded functions converging uniformly on $[0,\infty)$ and $[0,\infty)\times [0,1]$ toward $I_1K_1$ and $0$, respectively. Therefore, one obtains the uniform convergence \eqref{unifCV:D-2var}.\\
$(iii)$ Since $I_{1}$ is strictly increasing and strictly positive on $(0,\infty)$, we have 
\begin{equation}\label{comp I1}
	\forall (\lambda,b)\in(0,\infty)\times (0,1),\quad bI_{1}(\lambda b)<I_{1}(\lambda b)<I_{1}(\lambda).
\end{equation}
Now for a fixed $\lambda>0,$ we consider the function $\psi_{\lambda}$ defined by
$$\forall b\in(0,1],\quad \psi_{\lambda}(b)\triangleq bK_{1}(\lambda b)-K_{1}(\lambda).$$
By using \eqref{Bessel derivatives} and the fact that $K_{0}$ is strictly positive on $(0,\infty),$ we get 
$$\psi_{\lambda}'(b)=K_{1}(\lambda b)+\lambda bK_{1}'(\lambda b)=-\lambda bK_0(\lambda b)<0.$$
Hence $\psi_{\lambda}$ is strictly decreasing on $(0,1]$ with $\psi_{\lambda}(1)=0.$ Thus, $\psi_{\lambda}$ is strictly positive on $(0,1)$ which implies
\begin{equation}\label{comp K1}
	\forall (\lambda,b)\in(0,\infty)\times (0,1),\quad bK_1(\lambda b)>K_1(\lambda).
\end{equation}
Therefore, \eqref{sign Dinfty} holds. We now turn to the second order derivative. It is clear from \eqref{def:Dinfty} that differentiating twice with respect to $b$, the surviving term when evaluating at $b=1$ is
\begin{equation}\label{d2D8}
    \begin{aligned}
    \partial_b^2\mathcal{D}_{\infty}(\lambda,1)&=\Lambda_1(\lambda,1)\Big(\partial_b\big(bI_1(\lambda b)\big)|_{b=1}\Big)\Big(\partial_b\big(bK_1(\lambda b)\big)|_{b=1}\Big)\\
    &=I_1(\lambda)K_1(\lambda)\Big(I_1(\lambda)+\lambda I_1'(\lambda)\Big)\Big(K_1(\lambda)+\lambda K_1'(\lambda)\Big).
\end{aligned}
\end{equation}
Making appeal to \eqref{Bessel derivatives}, we have
\begin{equation}\label{useful:diff}
    I_1'(\lambda)=I_0(\lambda)-\frac{I_1(\lambda)}{\lambda}\qquad\textnormal{and}\qquad K_1'(\lambda)=-K_0(\lambda)-\frac{K_1(\lambda)}{\lambda}\cdot
\end{equation}
Inserting \eqref{useful:diff} into \eqref{d2D8}, we find \eqref{diff2Dinfty}. This achieves the proof of Lemma \ref{lem:elemD}. 
\end{proof}

\subsubsection{Fredholmness of the linearized operator at the equilibrium}
In this section, we show that the linearized operator at the equilibrium is Fredholm of zero index. This is done by writing the linear operator as a compact perturbation of an isomorphism. \begin{prop}\label{prop-fredholmness}
For any $\mathbf{m}\in\mathbb{N}\setminus\{0,1\}$ and $\alpha\in(0,1)$, the linear operator $D G(\lambda,b,0,0):X_{\mathbf{m}}^{1+\alpha}\rightarrow Y_{\mathbf{m}}^\alpha$ is Fredholm of zero index. 
\end{prop}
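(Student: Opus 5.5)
We split $DG(\lambda,b,0,0)=\mathcal{L}_0+\mathcal{K}$, with $\mathcal{L}_0:X_{\mathbf{m}}^{1+\alpha}\to Y_{\mathbf{m}}^\alpha$ an isomorphism and $\mathcal{K}$ compact; stability of the Fredholm index under compact perturbations then gives the claim. By Proposition \ref{reg G and lin op}, the operator acts diagonally on Fourier modes through $n\mathbf{m}M_{n\mathbf{m}}(\lambda,b)$. The natural choice for $\mathcal{L}_0$ is the diagonal operator obtained by freezing the entries of $M_{n\mathbf{m}}$ at their $n\to\infty$ limits. Since $\Omega_{n}(\lambda)\to I_1(\lambda)K_1(\lambda)$ and $\Lambda_n(\lambda,b)\to0$, we set
\begin{equation}\label{isoL12}
\mathcal{L}_0[h_1,h_2]\triangleq\sum_{n\geqslant1}n\mathbf{m}\begin{pmatrix}
I_1(\lambda)K_1(\lambda)-b\Lambda_1(\lambda,b) & 0\\
0 & \Lambda_1(\lambda,b)-bI_1(\lambda b)K_1(\lambda b)
\end{pmatrix}\begin{pmatrix}h_{1,n}\\ h_{2,n}\end{pmatrix}e_{n\mathbf{m}}.
\end{equation}
The two diagonal coefficients equal $K_1(\lambda)\big[I_1(\lambda)-bI_1(\lambda b)\big]$ and $I_1(\lambda b)\big[K_1(\lambda)-bK_1(\lambda b)\big]$. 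By \eqref{comp I1} and \eqref{comp K1}, both are nonzero constants, and their product is $\mathcal{D}_\infty(\lambda,b)$.

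\textbf{Step 1: $\mathcal{L}_0$ is an isomorphism.} Each component is a nonzero constant times the map $h=\sum h_n\overline{w}^{n\mathbf{m}-1}\mapsto\sum n\mathbf{m}h_n e_{n\mathbf{m}}$. This map equals $\mathrm{Im}\big(\overline{w}\,\overline{h'(w)}\big)$ up to a sign, or equivalently $-\mathrm{Im}(wh'(w))$ up to conjugation conventions, since $h'(w)=-\sum(n\mathbf{m}-1)h_n\overline{w}^{n\mathbf{m}}$ and the factor $n\mathbf{m}$ versus $n\mathbf{m}-1$ is adjusted by adding $\mathrm{Im}(\overline{w}h)$. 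Its inverse is obtained by recovering $h_n=g_n/(n\mathbf{m})$. This inverse is essentially a primitive composed with the Hilbert (Szeg\H{o}) projection, which is bounded on $C^\alpha(\mathbb{T})$, hence it maps $Y^\alpha_{1,\mathbf{m}}$ continuously into $X^{1+\alpha}_{1,\mathbf{m}}$. The symmetry constraints (real coefficients, modes in $\mathbf{m}\mathbb{N}$) are preserved, and $n\geqslant1$ because $\mathbf{m}\geqslant2$ excludes degenerate modes. This is the standard argument of \cite{R21,HHMV16}.

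\textbf{Step 2: the remainder is compact.} The remainder $\mathcal{K}\triangleq DG(\lambda,b,0,0)-\mathcal{L}_0$ is a Fourier multiplier. On the diagonal its entries are $-n\mathbf{m}I_{n\mathbf{m}}(\lambda)K_{n\mathbf{m}}(\lambda)$ and $n\mathbf{m}\,bI_{n\mathbf{m}}(\lambda b)K_{n\mathbf{m}}(\lambda b)$. Off the diagonal they are $n\mathbf{m}\,b\Lambda_{n\mathbf{m}}(\lambda,b)$ and $-n\mathbf{m}\Lambda_{n\mathbf{m}}(\lambda,b)$.

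The off-diagonal symbols decay exponentially, since $I_N(\lambda b)K_N(\lambda)\lesssim b^N/N$ by the uniform large-order asymptotics of Appendix \ref{appendix Bessel}. They therefore define smoothing operators, mapping $C^{1+\alpha}$ into $C^\infty$ and hence compactly into $C^\alpha$.

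For the diagonal symbols, the large-order expansion $I_N(x)K_N(x)=\frac{1}{2N}-\frac{x^2}{4N^3}+O(N^{-5})$ (uniform in bounded $x$) gives $N I_N(x)K_N(x)=\frac12+O(N^{-2})$. The constant $\frac12$ produces the operator $h\mapsto\frac12\,\mathrm{Im}(\text{mode-shift of }h)$, which has order $0$ on the $f$-scale. It maps $C^{1+\alpha}\to C^{1+\alpha}\hookrightarrow C^\alpha$ compactly. The $O(N^{-2})$ remainder is smoothing.

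Alternatively, one can avoid symbol calculus: write these terms as integral operators with kernels of logarithmic type $K_0(\lambda|w-\tau|)$ or smooth kernels, as in \cite[Prop. 2.1]{R21}, and show that they map $C^{1+\alpha}$ into $C^{1+\beta}$ or better, which is compact into $C^\alpha$ by Arzel\`a--Ascoli.

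The main obstacle is Step 2, namely justifying rigorously, in H\"older rather than Sobolev spaces, that these Fourier multipliers gain regularity. I would first try the integral-kernel representation, treating the $K_0$ self-interaction as a logarithmic kernel plus a smooth correction. Failing that, I would use a Mikhlin-type multiplier theorem on $C^\alpha(\mathbb{T})$ for symbols with uniform derivative bounds in $N$.
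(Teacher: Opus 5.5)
Your proof is correct, but the compactness half follows a different route from the paper.

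\textbf{The principal part is the same.} Your $\mathcal{L}_0$ carries exactly the paper's constants $K_1(\lambda)[I_1(\lambda)-bI_1(\lambda b)]$ and $I_1(\lambda b)[K_1(\lambda)-bK_1(\lambda b)]$. The only difference is the multiplier: the paper uses $n\mathbf{m}-1$, i.e.\ $\mathrm{Im}(h_j')$, while you use $n\mathbf{m}$, i.e.\ $\mathrm{Im}(h_j')+\mathrm{Im}(w\overline{h_j})$. Your identifications through $\mathrm{Im}(\overline{w}\,\overline{h'})$ or $\mathrm{Im}(wh')$ actually land on the mode $n\mathbf{m}-1$. Since you invert on the Fourier side, this slip is harmless.

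\textbf{The remainder is handled differently.} The paper goes back to the explicit integral form of $\partial_{(f_1,f_2)}G(\lambda,b,0,0)$. It uses the kernel bounds of Lemma~\ref{Lem-pottheory} to show that the remainder maps into $C^\delta$, $\delta\in(\alpha,1)$; this is the fallback you mention. You instead stay on the symbol side of Proposition~\ref{reg G and lin op}. With $N=n\mathbf{m}$, the remainder has diagonal symbols $-NP_N(\lambda)=-\tfrac12+O(N^{-2})$ and $NbP_N(\lambda b)=\tfrac b2+O(N^{-2})$ by \eqref{expand:Pn}. The arguments $\lambda$ and $\lambda b$ are fixed, so the constants are uniform in $N$. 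The off-diagonal symbols are $O(b^N)$.

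\textbf{Your flagged obstacle is already resolved.} No multiplier theorem is needed. The constant part is exactly $h_j\mapsto c\,\mathrm{Im}(w\overline{h_j})$, which is bounded on $C^{1+\alpha}$. For the remaining symbols $m(N)$ one has $\sum_n N^2|m(N)|\,|h_{j,n}|\lesssim\sum_n|h_{j,n}|\lesssim\|h_j'\|_{L^2}$, so they map into $C^2$. Arzel\`a--Ascoli then gives compactness into $C^\alpha$.

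\textbf{What each route buys.} Yours is shorter, reuses large-order asymptotics the paper needs anyway, and proves more: the remainder gains a full derivative and lands in $C^{1+\alpha}$, not only in $C^\delta$. The paper's kernel route needs neither the Fourier diagonalization nor Bessel asymptotics, only the logarithmic singularity of $K_0$. It is therefore the one that would survive for linearizations at non-radial states, or kernels without an explicit symbol, at the price of checking kernel estimates term by term.
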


\begin{proof}
Recall the definition of $G$ in \eqref{def:G}. Hence
\begin{align*}
\partial_{f_1}G_1(\lambda,b,0,0)[h_1](w)&=\textnormal{Im}\left\{\left[S(\lambda,bw,w)(w)-S(\lambda,w,w)\right]\overline{w}\overline{h_1'(w)}\right\}\\
&\quad+\textnormal{Im}\left\{\left[\lambda b \fint_{\mathbb{T}}K_0'(\lambda|w-b\tau|)\frac{(w-b\tau)\cdot h_1(w)}{|w-b\tau|}d\tau\right.\right.\\
&\qquad\qquad\quad-\lambda  \fint_{\mathbb{T}}K_0'(\lambda|w-\tau|)\frac{(w-\tau)\cdot (h_1(w)-h_1(\tau))}{|w-\tau|}d\tau\\
&\qquad\qquad\quad\left.\left.-\fint_{\T} h_1'(\tau)K_0(\lambda|w-\tau|)d\tau \right]\overline{w}\right\}.
\end{align*}
Using the change of variables $\tau\mapsto w\tau$, the fact that $|w|=1$ and Beltrami's summation formula \eqref{Beltrami's summation formula}, proceeding as in \cite{R21}, we find
\begin{align*}
S(\lambda,bw,w)(w)=&bw\fint_{\T}K_0(\lambda|1-b\tau|)d\tau={bw}I_1(\lambda b)K_1(\lambda),\\
S(\lambda,w,w)(w)=&w\fint_{\T}K_0(\lambda|1-\tau|)d\tau={w}I_1(\lambda)K_1(\lambda).
\end{align*}
For the details of the last two identities, we refer the reader to \cite{R21}. That amounts to
\begin{align*}
\partial_{f_1}G_1(\lambda,b,0,0)[h_1](w)&=\left[I_1(\lambda)K_1(\lambda)-bI_1(\lambda b)K_1(\lambda)\right]\textnormal{Im}\left\{h_1'(w)\right\}\\
&\quad+\textnormal{Im}\left\{\left[\lambda b \fint_{\mathbb{T}}K_0'(\lambda|w-b\tau|)\frac{(w-b\tau)\cdot h_1(w)}{|w-b\tau|}d\tau\right.\right.\\
&\qquad\qquad\quad-\lambda  \fint_{\mathbb{T}}K_0'(\lambda|w-\tau|)\frac{(w-\tau)\cdot (h_1(w)-h_1(\tau))}{|w-\tau|}d\tau\\
&\qquad\qquad\quad\left.\left.-\fint_{\T} h_1'(\tau)K_0(\lambda|w-\tau|)d\tau \right]\overline{w}\right\}.
\end{align*}
We can move to $\partial_{f_2}G_1(\lambda,b,0,0)$ to find
\begin{align*}
\partial_{f_2}G_1(\lambda,b,0,0)[h_2](w)=&\textnormal{Im}\left\{\left[\fint_{\T}h_2'(\tau)K_0(\lambda|w-b\tau|)d\tau-b\fint_{\T}K_0'(\lambda|w-b\tau|)\frac{(w-b\tau)\cdot h_2(\tau)}{|w-b\tau|}d\tau\right]\overline{w}\right\}.
\end{align*}
Similarly we get
\begin{align*}
\partial_{f_1}G_2(\lambda,b,0,0)[h_1](w)=&-\textnormal{Im}\left\{\left[\fint_{\T}h_1'(\tau)K_0(\lambda|bw-\tau|)d\tau-\fint_{\T}K_0'(\lambda|bw-\tau|)\frac{(bw-\tau)\cdot h_1(\tau)}{|bw-\tau|}d\tau\right]\overline{w}\right\}.
\end{align*}
Moreover, since
\begin{align*}
S(\lambda, bw, bw)(w)=&b\fint_{\mathbb{T}} K_0(\lambda b|w-\tau|)d\tau=bw\fint_{\mathbb{T}} K_0(\lambda b|1-\tau|)d\tau=bwI_1(\lambda b)K_1(\lambda b),\\
S(\lambda, w, bw)(w)=&\fint_{\mathbb{T}} K_0(\lambda|bw-\tau|)d\tau=w\fint_{\mathbb{T}} K_0(\lambda |b-\tau|)d\tau=wI_1(\lambda b)K_1(\lambda),
\end{align*}
we get
\begin{align*}
\partial_{f_2}G_2(\lambda,b,0,0)[h_2](w)=&\left[I_1(\lambda b)K_1(\lambda)-bI_1(\lambda b)K_1(\lambda b)\right]\textnormal{Im}\left\{h_2'(w)\right\}\\
&+b\textnormal{Im}\left\{\left[b\lambda  \fint_{\mathbb{T}}K_0'(\lambda b|w-\tau|)\frac{(w-\tau)\cdot (h_2(w)-h_2(\tau))}{|w-\tau|}d\tau\right.\right.\\
&\qquad\qquad\quad+\fint_{\T}h_2'(\tau)K_0(\lambda b|w-\tau|)d\tau\\
&\qquad\qquad\quad\left.\left.-\lambda \fint_{\T}K_0'(\lambda |bw-\tau|)\frac{(bw-\tau)\cdot h_2(\tau)}{|bw-\tau|}d\tau\right]\overline{w}\right\}.
\end{align*}
Putting everything together we find 
\begin{align*}
DG_1(\lambda,b,0,0)[h_1,h_2](w)&=\left[I_1(\lambda)K_1(\lambda)-bI_1(\lambda b)K_1(\lambda)\right]\textnormal{Im}\left\{h_1'(w)\right\}\\
&\quad+\textnormal{Im}\left\{\left[\lambda b \fint_{\mathbb{T}}K_0'(\lambda|w-b\tau|)\frac{(w-b\tau)\cdot h_1(w)}{|w-b\tau|}d\tau\right.\right.\\
&\qquad\qquad\quad-\lambda  \fint_{\mathbb{T}}K_0'(\lambda|w-\tau|)\frac{(w-\tau)\cdot (h_1(w)-h_1(\tau))}{|w-\tau|}d\tau\\
&\qquad\qquad\quad-\fint_{\T} h_1'(\tau)K_0(\lambda|w-\tau|)d\tau+ \fint_{\T}h_2'(\tau)K_0(\lambda|w-b\tau|)d\tau\\
&\qquad\qquad\quad-\left.\left.b\fint_{\T}K_0'(\lambda|w-b\tau|)\frac{(w-b\tau)\cdot h_2(\tau)}{|w-b\tau|}d\tau\right]\overline{w}\right\}
\end{align*}
and 
\begin{align*}
  DG_2(\lambda,b,0,0)[h_1,h_2](w)&=\left[I_1(\lambda b)K_1(\lambda)-bI_1(\lambda b)K_1(\lambda b)\right]\textnormal{Im}\left\{h_2'(w)\right\}\\
&\quad+b\textnormal{Im}\left\{\left[b\lambda  \fint_{\mathbb{T}}K_0'(\lambda b|w-\tau|)\frac{(w-\tau)\cdot (h_2(w)-h_2(\tau))}{|w-\tau|}d\tau\right.\right.\\
&\qquad\qquad\quad+\fint_{\T}h_2'(\tau)K_0(\lambda b|w-\tau|)d\tau-\fint_{\T}h_1'(\tau)K_0(\lambda|bw-\tau|)d\tau\\
&\qquad\qquad\quad-\lambda \fint_{\T}K_0'(\lambda |bw-\tau|)\frac{(bw-\tau)\cdot h_2(\tau)}{|bw-\tau|}d\tau\\
&\qquad\qquad\quad\left.\left.-\fint_{\T}K_0'(\lambda|bw-\tau|)\frac{(bw-\tau)\cdot h_1(\tau)}{|bw-\tau|}d\tau\right]\overline{w}\right\}.  
\end{align*}
 Thus, we can decompose the linear operator as 
$$\partial_{(f_1,f_2)}G(\lambda,b,0,0)=\mathcal{L}+\mathcal{K},$$ 
with $\mathcal{L}\triangleq(\mathcal{L}_1,\mathcal{L}_2)$ and $\mathcal{K}\triangleq(\mathcal{K}_1,\mathcal{K}_2)$, where
\begin{equation}\label{isoL12}
    \begin{aligned}
   \mathcal{L}_1[h_1](w)&\triangleq\left[I_1(\lambda)K_1(\lambda)-bI_1(\lambda b)K_1(\lambda)\right]\textnormal{Im}\left\{h_1'(w)\right\},\\
   \mathcal{L}_2[h_2](w)&\triangleq\left[I_1(\lambda b)K_1(\lambda)-bI_1(\lambda b)K_1(\lambda b)\right]\textnormal{Im}\left\{h_2'(w)\right\}
\end{aligned}
\end{equation}
and
\begin{align*}
    \mathcal{K}_1[h_1,h_2](w)&\triangleq\textnormal{Im}\left\{\left[\lambda b \fint_{\mathbb{T}}K_0'(\lambda|w-b\tau|)\frac{(w-b\tau)\cdot h_1(w)}{|w-b\tau|}d\tau\right.\right.\\
&\qquad\qquad\quad-\fint_{\T} h_1'(\tau)K_0(\lambda|w-\tau|)d\tau+\fint_{\T}h_2'(\tau)K_0(\lambda|w-b\tau|)d\tau\\
&\qquad\qquad\quad-\lambda  \fint_{\mathbb{T}}K_0'(\lambda|w-\tau|)\frac{(w-\tau)\cdot (h_1(w)-h_1(\tau))}{|w-\tau|}d\tau\\
&\qquad\qquad\quad\left.\left.-b\fint_{\T}K_0'(\lambda|w-b\tau|)\frac{(w-b\tau)\cdot h_2(\tau)}{|w-b\tau|}d\tau\right]\overline{w}\right\},\\
    \mathcal{K}_2[h_1,h_2](w)&\triangleq b\textnormal{Im}\left\{\left[b\lambda  \fint_{\mathbb{T}}K_0'(\lambda b|w-\tau|)\frac{(w-\tau)\cdot (h_2(w)-h_2(\tau))}{|w-\tau|}d\tau\right.\right.\\
&\qquad\qquad\quad+\fint_{\T}h_2'(\tau)K_0(\lambda b|w-\tau|)d\tau-\fint_{\T}h_1'(\tau)K_0(\lambda|bw-\tau|)d\tau\\
&\qquad\qquad\quad-\lambda \fint_{\T}K_0'(\lambda |bw-\tau|)\frac{(bw-\tau)\cdot h_2(\tau)}{|bw-\tau|}d\tau\\
&\qquad\qquad\quad\left.\left.-\fint_{\T}K_0'(\lambda|bw-\tau|)\frac{(bw-\tau)\cdot h_1(\tau)}{|bw-\tau|}d\tau\right]\overline{w}\right\}.
\end{align*}
Let us check that $\mathcal{L}:X_{\mathbf{m}}^{1+\alpha}\rightarrow Y_{\mathbf{m}}^\alpha$ is an isomorphism and $\mathcal{K}:X_{\mathbf{m}}^{1+\alpha}\rightarrow Y_{\mathbf{m}}^\alpha$ is a compact operator. In that case, since compact perturbations of Fredholm operators remain Fredholm with same index, then we can conclude that the linearized operator $DG(\lambda,b,0,0):X_{\mathbf{m}}^{1+\alpha}\rightarrow Y_{\mathbf{m}}^\alpha$ is Fredholm of zero index, concluding the proof.\\
$\blacktriangleright$ \textbf{Isomorphism component :}  
    First, let us begin by checking that $\mathcal{L}$ is an isomorphism. By the definition of the function spaces we can write $h_1$ and $h_2$ in Fourier series as
    $$
    h_1(w)=\sum_{n=1}^\infty h_{1,n}\overline{w}^{n\mathbf{m}-1}, \qquad h_2(w)=\sum_{n=1}^\infty h_{2,n}\overline{w}^{n\mathbf{m}-1},
    $$
    implying
      $$
    \textnormal{Im}\left\{h_1'(w)\right\}=\sum_{n=1}^\infty h_{1,n}(n\mathbf{m}-1)e_{n\mathbf{m}}(w), \qquad \textnormal{Im}\left\{h_2'(w)\right\}=\sum_{n=1}^\infty h_{2,n}(n\mathbf{m}-1)e_{n\mathbf{m}}(w).
    $$
    In that way, we find
    \begin{align*}
        \mathcal{L}_1[h_1]=&\left[I_1(\lambda)K_1(\lambda)-bI_1(\lambda b)K_1(\lambda)\right]\sum_{n=1}^\infty h_{1,n}(n\mathbf{m}-1)e_{n\mathbf{m}},\\
         \mathcal{L}_2[h_2]=&\left[I_1(\lambda b)K_1(\lambda)-bI_1(\lambda b)K_1(\lambda b)\right]\sum_{n=1}^\infty h_{2,n}(n\mathbf{m}-1)e_{n\mathbf{m}}.
    \end{align*}    
    Note that in view of \eqref{comp I1}, \eqref{comp K1} and the fact that the modified Bessel function are strictly positive, we obtain
    $$I_1(\lambda)K_1(\lambda)-bI_1(\lambda b)K_1(\lambda)\neq 0\qquad\textnormal{and}\qquad I_1(\lambda b)K_1(\lambda)-bI_1(\lambda b)K_1(\lambda b)\neq 0.$$
    Take now $(d_1,d_2)\in Y_{\mathbf{m}}^\alpha$ and let us solve
    \begin{equation}\label{fred-iso}
    \mathcal{L}[h_1,h_2]=(d_1,d_2).
    \end{equation}
    Writing $d_j$ in Fourier as
    $$
    d_j(w)=\sum_{n=1}^\infty d_{j,n}e_{n\mathbf{m}}(w),
    $$
    we find that
    \begin{align*}
    h_{1,n}=&\frac{1}{I_1(\lambda)K_1(\lambda)-bI_1(\lambda b)K_1(\lambda)}\frac{d_{1,n}}{n\mathbf{m}-1},\\
     h_{2,n}=&\frac{1}{I_1(\lambda b)K_1(\lambda)-bI_1(\lambda b)K_1(\lambda b)}\frac{d_{2,n}}{n\mathbf{m}-1}\cdot
    \end{align*}
    That implies that the solution of \eqref{fred-iso} agrees with
    \begin{align*}
    h_1(w)=&\frac{1}{I_1(\lambda)K_1(\lambda)-bI_1(\lambda b)K_1(\lambda)} \sum_{n=1}^\infty \frac{d_{1,n}}{n\mathbf{m}-1} \overline{w}^{nm-1},\\
     h_2(w)=&\frac{1}{I_1(\lambda b)K_1(\lambda)-bI_1(\lambda b)K_1(\lambda b)} \sum_{n=1}^\infty \frac{d_{2,n}}{nm-1} \overline{w}^{n\mathbf{m}-1}.
    \end{align*}
    Let us work with the first equation for $h_1$. We have
    $$
    h_1'(w)=-\frac{1}{I_1(\lambda)K_1(\lambda)-bI_1(\lambda b)K_1(\lambda)} \sum_{n=1}^\infty d_{1,n} \overline{w}^{n\mathbf{m}}.
    $$
    Moreover, since $d_1\in C^\alpha(\mathbb{T})$, we know that its Szeg\" o projection 
    $$d_1^+(w)\triangleq\sum_{n=1}^\infty d_{1,n} w^{n\mathbf{m}}$$ belongs also to $C^\alpha(\mathbb{T})$, and thus
    $$
    h_1'(w)=-\frac{1}{I_1(\lambda)K_1(\lambda)-bI_1(\lambda b)K_1(\lambda)} \overline{d_1^+(w)} \in C^\alpha(\mathbb{T}).
    $$
    A similar analysis can be performed for $h_2$ obtaining that $h_1,h_2\in C^{1+\alpha}(\mathbb{T})$ and thus $\mathcal{L}$ is a isomorphism.\\
    $\blacktriangleright$ \textbf{Compact component :}
   In order to get that $\mathcal{K}$ is a compact operator we shall check that $\mathcal{K}[h_1,h_2]\in C^\delta(\mathbb{T})$, for any $\delta\in(\alpha,1)$. Choosing $\delta>\alpha$ and using that the embedding $C^\delta(\mathbb{T})\subset C^\alpha(\mathbb{T})$ is compact, hence we get that $\mathcal{K}$ is a compact operator. To check such regularity we make use of Lemma \ref{Lem-pottheory}. 
   Let us give the idea of two terms. Define
   $$
   \mathcal{T}_1[h_1](w)\triangleq\fint_{\T}K_0'(\lambda|w-b\tau|) \frac{(w-b\tau)\cdot h_1(w)}{|w-b\tau|}d\tau. 
   $$
   Note that this term is related to the interaction between the two boundaries. Moreover, since the boundaries are well-separated, meaning $b<1$, we get that the function inside the integral is never singular and thus it is clear that it belongs to $C^1(\mathbb{T}).$ In particular,
   $$\forall\,\delta\in(\alpha,1),\quad\left\|T_1[h_1]\right\|_{C^\delta(\mathbb{T})}\leqslant C \|h_1\|_{L^\infty(\mathbb{T})}\leqslant C\|h_1\|_{C^{1+\alpha}(\mathbb{T})}.$$
Let us now choose another term whose function inside the integral is singular, for instance
   $$\mathcal{T}_2[h_1](w)\triangleq\fint_{\T} K_0'(\lambda|w-\tau|)\frac{(w-\tau)\cdot (h_1(w)-h_1(\tau))}{|w-\tau|}d\tau.$$
   Here, we use Lemma \ref{Lem-pottheory} and one just have to check that the kernel
   $$K(w,\tau)\triangleq K_0'(\lambda|w-\tau|)\frac{(w-\tau)\cdot (h_1(w)-h_1(\tau))}{|w-\tau|},$$
   satisfies the hypothesis of the lemma. Indeed, using the logarithmic behaviour of $K_0$ at $0$ we find
   \begin{align*}
    |K(w,\tau)|&\leqslant C \|h_1'\|_{L^{\infty}(\mathbb{T})}\leqslant C\|h_1\|_{C^{1+\alpha}(\mathbb{T})},\\
     \left|\partial_ wK(w,\tau)\right|&\leqslant C \frac{\|h_1'\|_{L^\infty(\mathbb{T})}}{|w-\tau|}\leqslant C \frac{\|h_1\|_{C^{1+\alpha}(\mathbb{T})}}{|w-\tau|}\cdot
   \end{align*}
   Applying Lemma \ref{Lem-pottheory} we conclude that $\mathcal{T}_2[h_1]\in C^\delta(\mathbb{T})$, for any $\delta \in(\alpha,1)$. This completes the proof of Proposition \ref{prop-fredholmness}.
    \end{proof}

\subsection{Bifurcation with the inner radius} \label{sec:bif_b}
In this subsection, we assume that $\lambda\in(0,\infty)$ is fixed and we denote
$$\mathcal{D}_{n,\lambda}(b)\triangleq\mathcal{D}_{n}(\lambda,b),\qquad\mathcal{D}_{\infty,\lambda}(b)\triangleq\mathcal{D}_{\infty}(\lambda,b),\qquad G_{\lambda}(b,f_1,f_2)\triangleq G(\lambda,b,f_1,f_2).$$
In order to find a nontrivial kernel of the linearized operator, which is needed for the bifurcation argument, our main task here in the following question:
\begin{center}
 {\it   Do there exist zeros $b$ of $\mathcal{D}_{n,\lambda}$ in $(0,1)$ ?}
\end{center}
The following numerical simulations answer positively to this question and moreover the zero seems to be unique. 

 \begin{figure}[!ht]
 	\centering
 	\begin{subfigure}[l]{0.3\textwidth}
 		\includegraphics[width=\textwidth]{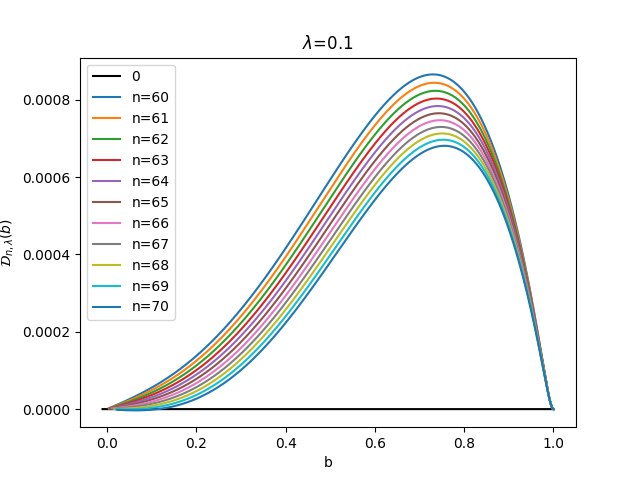}
	\end{subfigure}
	\begin{subfigure}[c]{0.3\textwidth}
 		\includegraphics[width=\textwidth]{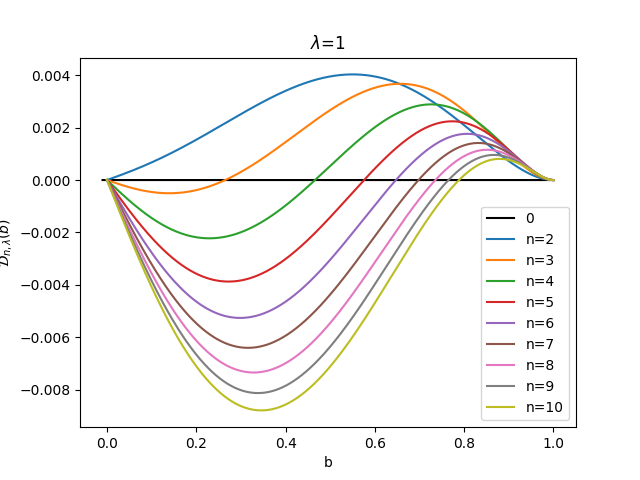}
 	\end{subfigure}
 	\begin{subfigure}[r]{0.3\textwidth}
 		\includegraphics[width=\textwidth]{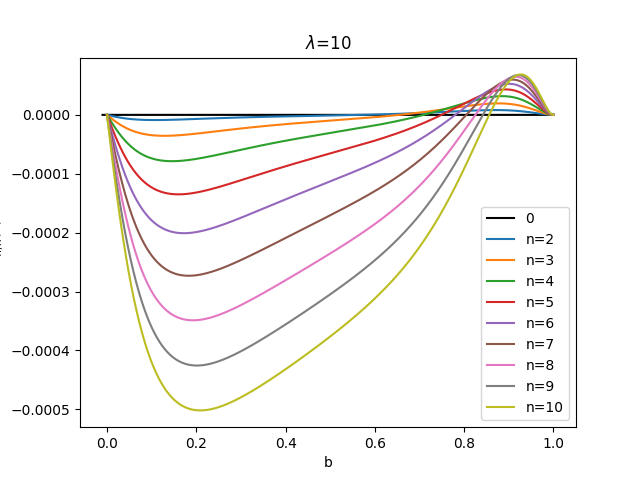}
 	\end{subfigure}
 	\caption{Graphs of $b\mapsto\mathcal{D}_{n,\lambda}(b)$ for $\lambda\in\{0.1,1,10\}$ and for different values of $n.$}
 \end{figure}
 \begin{figure}[!ht]
 	\centering
 	\begin{subfigure}[l]{0.3\textwidth}
 		\includegraphics[width=\textwidth]{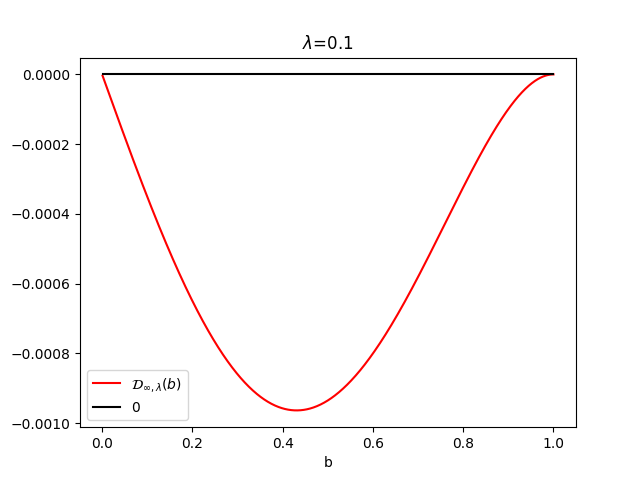}
 	\end{subfigure}
 	\begin{subfigure}[c]{0.3\textwidth}
 		\includegraphics[width=\textwidth]{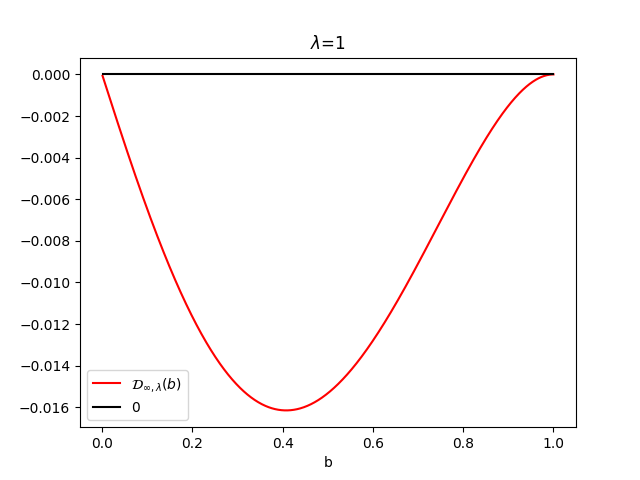}
 	\end{subfigure}
 	\begin{subfigure}[r]{0.3\textwidth}
 		\includegraphics[width=\textwidth]{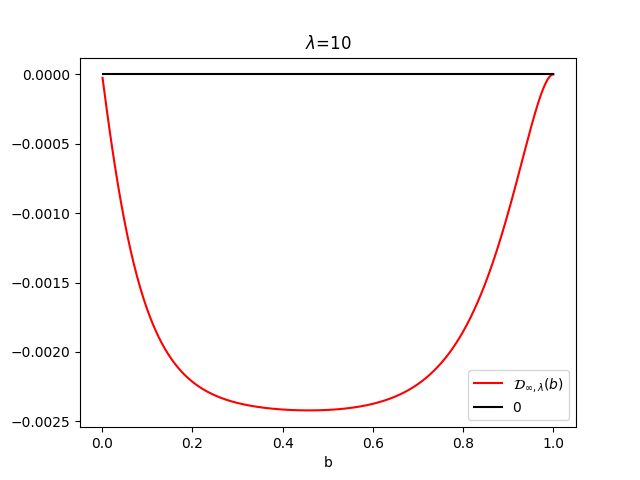}
 	\end{subfigure}
 	\caption{Graphs of the limiting profile $b\mapsto\mathcal{D}_{\infty,\lambda}(b)$ for $\lambda\in\{0.1,1,10\}.$}\label{detlimb}
 \end{figure}

Our first goal is to prove the following analytical result, validating the above mentioned numerical observations in the asymptotic $n\to \infty$.
\begin{prop}\label{prop seq b}
There exists $N(\lambda)\in\mathbb{N}^{*}$ such that, for all $n\in\mathbb{N}^{*}$, $n\geqslant N(\lambda)$, the equation 
$$\mathcal{D}_{n,\lambda}(b)=0,$$
admits a unique solution $b_{n,\lambda}\in(0,1).$ In addition, the zero $b_{n,\lambda}$ is simple. Moreover, the sequence $(b_{n,\lambda})_{n\geqslant N(\lambda)}$ is strictly increasing and converges to $1.$
\end{prop}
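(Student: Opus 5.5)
The plan follows the roadmap sketched in the introduction: existence by a sign change, localization of all roots near $b=1$, a refined asymptotic expansion of any root, and finally positivity of the derivative at every root, which gives uniqueness and simplicity.

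\emph{Existence.} I would first show that $\mathcal{D}_{n,\lambda}$ changes sign on $(0,1)$ for $n$ large.
\begin{itemize}
\item Away from the endpoints, fix a compact $[b_0,b_1]\subset(0,1)$. By Lemma \ref{lem:elemD}-(ii),(iii) we have $\mathcal{D}_{n,\lambda}\to\mathcal{D}_{\infty,\lambda}$ uniformly and $\mathcal{D}_{\infty,\lambda}<0$. Hence $\mathcal{D}_{n,\lambda}<0$ on $[b_0,b_1]$ for $n$ large.
\item Near $b=0$, I would use the small-argument expansions \eqref{asymptotic expansion of small argument}. For $n\geqslant2$ the term $b\Lambda_n^2$ is $O(b^{2n+1})$, and $\Lambda_1(\lambda,b)\sim\lambda bK_1(\lambda)/2$. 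Moreover $b\Omega_n(\lambda b)\to0$ like $b\big(\tfrac12-\tfrac1{2n}\big)$. So $\mathcal{D}_{n,\lambda}(b)\sim \Omega_n(\lambda)\, b\,\big(\tfrac{\lambda K_1(\lambda)}{2}-\tfrac{n-1}{2n}\big)$, whose sign needs to be checked carefully.
\item Near $b=1$, where $\mathcal{D}_{n,\lambda}(1)=0$, I would compute $\partial_b\mathcal{D}_{n,\lambda}(1)$. Using \eqref{Bessel derivatives} and the Wronskian \eqref{wronskian}, its sign should be governed by $-\partial_b\Lambda_n$, which is of size $n$ times $I_nK_n\sim\frac1{2n}$. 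This yields a nonvanishing derivative of definite sign, giving $\mathcal{D}_{n,\lambda}>0$ on some interval $(1-c/n,1)$. With negativity in the interior, the intermediate value theorem produces a root.
\end{itemize}

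\emph{Localization.} Since $\mathcal{D}_{\infty,\lambda}<0$ on compacts of $(0,1)$, every root lies in $(0,b_0)\cup(b_1,1)$. I would exclude the left part for $n$ large using the small-$b$ expansion above, uniformly in $n$. For the right part I would show that every root tends to $1$ (Lemma \ref{lem:accu1} type). To get the scale, rescale $b=1-\beta/n$ and use uniform (Debye/Nicholson-type) asymptotics of $I_n(\lambda b)K_n(\lambda)$ in this regime: since $I_n(\lambda b)/I_n(\lambda)\approx b^n\approx e^{-\beta}$, we get $\Lambda_n\approx e^{-\beta}/(2n)$. The determinant then becomes
$$n^{-2}\,F_\lambda(\beta)+O(n^{-3}),$$
where $F_\lambda$ collects $\tfrac12\partial_b^2\mathcal{D}_{\infty}(\lambda,1)\beta^2$ (from \eqref{diff2Dinfty}) together with the $O(1/n)$ corrections, namely $-\frac{1}{2n}$ times the linear terms and $\frac{e^{-2\beta}}{4n^2}$. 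Roots then correspond to zeros of
$$F_\lambda(\beta)=-\lambda^2 I_0K_0I_1K_1\beta^2+(\text{linear})\,\beta+\tfrac14\big(e^{-2\beta}-1\big)+\dots$$
I expect $F_\lambda$ to have a unique positive zero $\beta(\lambda)$, simple with $F_\lambda'(\beta(\lambda))\neq0$. The implicit function theorem would then give $b_{n,\lambda}=1-\beta(\lambda)/n+O(n^{-2})$.

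\emph{Main obstacle.} The hard part is justifying this rescaled expansion. Derivatives $\partial_b^k\Lambda_n$ grow like $n^{k-1}$, so a Taylor truncation fails. I would handle $\Lambda_n$ exactly through $I_n(\lambda b)/I_n(\lambda)$, using the integral or series representation of $I_n$ to show
$$\frac{I_n(\lambda(1-\beta/n))}{I_n(\lambda)}=e^{-\beta}\big(1+O(1/n)\big)$$
uniformly for $\beta$ in compacts. Meanwhile I would Taylor-expand only the smooth parts $\Lambda_1$ and $\Omega_n(\lambda b)$. For the latter, $I_nK_n(\lambda b)$ has $b$-derivatives of size $O(1/n)$, as the series shows.

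\emph{Conclusion.} Simplicity follows from $\partial_b\mathcal{D}_{n,\lambda}(b_{n,\lambda})=n^{-1}F_\lambda'(\beta(\lambda))(-1)+o(n^{-1})\neq0$. Its sign is the same at every root, so there is a single root: two consecutive roots would force opposite derivative signs. Finally, the expansion $1-\beta/n+O(n^{-2})$ gives strict increase of $(b_{n,\lambda})$ for $n$ large and convergence to $1$.
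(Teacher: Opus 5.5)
You have two genuine gaps.

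\textbf{The step at $b=1$ is false.} The derivative $\partial_b\mathcal{D}_{n,\lambda}(1)$ is not of definite sign: it vanishes for every $n$ (see \eqref{Dnprime01}). Write $\mathcal{D}_{n,\lambda}=AB+b\Lambda_n^2$ with $A=\Omega_n(\lambda)-b\Lambda_1$, $B=\Lambda_1-b\Omega_n(\lambda b)$ and $P_n=I_nK_n$. Then $A(1)=-P_n(\lambda)$, $B(1)=P_n(\lambda)$, and
\[
\partial_b\mathcal{D}_{n,\lambda}(1)=P_n(\lambda)\Big[\lambda\big(I_n'K_n-I_nK_n'\big)(\lambda)-1\Big]=0
\]
by \eqref{wronskian}. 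The contribution $\partial_b(b\Lambda_n^2)(1)\approx\frac{1}{2n}$ that you single out is cancelled exactly by the smooth part, which contributes $-P_n(\lambda)\big(\lambda I_0K_1+\lambda I_1K_0\big)(\lambda)\approx-\frac1{2n}$. Positivity of $\mathcal{D}_{n,\lambda}$ just left of $1$ is therefore a second-order effect. This is why the paper needs $\mathcal{D}_{n,\lambda}''(1)\to1-2\lambda^2I_0K_0I_1K_1\in(0,1)$.

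Your own rescaled profile shows the same thing once the linear coefficient is computed. By the Wronskian it equals $\tfrac12$, so
\[
F_\lambda(\beta)=-\lambda^2I_0K_0I_1K_1\beta^2+\tfrac{\beta}{2}+\tfrac14\big(e^{-2\beta}-1\big)=\tfrac{\beta^2}{2}\Big(1-2\lambda^2I_0K_0I_1K_1-\Phi(\beta)\Big),\qquad \Phi(x)=\tfrac{2x^2-2x+1-e^{-2x}}{2x^2},
\]
and in particular $F_\lambda'(0)=0$. This identity also proves what you only ``expect''. Since $\Phi$ increases strictly from $0$ to $1$, $F_\lambda$ has exactly one positive zero, $\beta(\lambda)=\Phi^{-1}(1-2\lambda^2I_0K_0I_1K_1)$, and it is simple. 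Existence can be rescued from the sign change of $F_\lambda$, but not by the argument you wrote.

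\textbf{Uniqueness needs control at all scales.} Your argument requires that \emph{every} zero in $(0,1)$ satisfies $n(1-b)\to\beta(\lambda)$. Your expansion $n^2\mathcal{D}_{n,\lambda}(1-\beta/n)=F_\lambda(\beta)+O(1/n)$ is only uniform for $\beta$ in compact sets. Knowing that the zeros tend to $1$ does not exclude two kinds of zeros:
\begin{itemize}
\item zeros with $n(1-b)\to\infty$, where you would need negativity on $\{\beta_2/n\le1-b\le\varepsilon\}$, which lies outside any compact in $\beta$;
\item zeros with $n(1-b)\to0$, where $F_\lambda(\beta)\approx\tfrac{\beta^2}{2}(1-2\lambda^2I_0K_0I_1K_1)$ is no larger than the $O(1/n)$ error once $\beta\lesssim n^{-1/2}$. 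To handle this you must show the error also vanishes to second order at $\beta=0$, which again rests on $\mathcal{D}_{n,\lambda}(1)=\mathcal{D}_{n,\lambda}'(1)=0$.
\end{itemize}

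The paper closes this with one argument valid at all scales. For any zero $1-\beta_n$, Taylor's formula at $1$ gives
\[
\mathcal{D}_{n,\lambda}''(1)=\beta_n\int_0^1(1-t)^2\,\mathcal{D}_{n,\lambda}'''(1-t\beta_n)\,dt .
\]
Since $\mathcal{D}_{n,\lambda}'''(b)$ equals $2nb^{2n-2}$ up to a bounded error, the right-hand side is $\Phi(n\beta_n)+o(1)$; it tends to $1$ if $n\beta_n\to\infty$ and to $0$ if $n\beta_n\to0$. The left-hand side tends to a limit in $(0,1)$, so $n\beta_n\to\beta(\lambda)$ for every zero.

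Once that is established, your derivative formula is actually a cleaner route to simplicity than the paper's. Assuming $C^1$ control of the rescaled expansion,
\[
\partial_b\mathcal{D}_{n,\lambda}(b_{n,\lambda})=-\tfrac1n F_\lambda'(\beta(\lambda))+o(\tfrac1n)=\tfrac{\beta(\lambda)^2}{2n}\,\Phi'(\beta(\lambda))+o(\tfrac1n)>0,
\]
which avoids the $\Psi$ analysis in Lemma \ref{lem:asyan}. Incidentally, in that lemma the term $P_n(\lambda)\partial_b[\Lambda_1-b\Omega_n(\lambda b)]$ should carry a minus sign. The stated $\alpha(\lambda)$ is therefore too large by $\lambda I_1(\lambda)K_0(\lambda)$, though the corrected value $\tfrac{\beta^2}{2}\Phi'(\beta)$ is still positive.
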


Proposition \ref{prop seq b} is a consequence of Lemmas \ref{lem:cvDnb}, \ref{lem:accu1}, \ref{lem:asybn} and \ref{lem:asyan} below. These are proved making extensive use of modified Bessel functions properties. Let us start by stating some results of the function $\mathcal{D}_{n,\lambda}$ and its first derivatives.

\begin{lem}\label{lem:cvDnb}
The determinant function $\mathcal{D}_{n,\lambda}$ enjoys the following properties.
\begin{enumerate}[label=(\roman*)]
    \item For any $n\in\mathbb{N}^*,$
    \begin{equation}\label{Dn01}
        \mathcal{D}_{n,\lambda}(0)=0=\mathcal{D}_{n,\lambda}(1).
    \end{equation}
    Moreover, we have the following uniform convergence
    $$\lim_{n\to\infty}\|\mathcal{D}_{n,\lambda}-\mathcal{D}_{\infty,\lambda}\|_{L^{\infty}([0,1])}=0.$$
    \item For any $n\in\mathbb{N}^*,$
    \begin{equation}\label{Dnprime01}
        \mathcal{D}_{n,\lambda}'(1)=0,\qquad\mathcal{D}_{n,\lambda}'(0)=\Omega_{n}(\lambda)\Omega_{n}^{-}(\lambda),\qquad\Omega_{n}^{-}(\lambda)\triangleq\frac{n(\lambda K_{1}(\lambda)-1)+1}{2n}\cdot
    \end{equation}
    Therefore,
    \begin{equation}\label{limDnprime0}
        \lim_{n\to\infty}\mathcal{D}_{n,\lambda}'(0)=\tfrac{1}{2}I_1(\lambda)K_1(\lambda)\big(\lambda K_{1}(\lambda)-1\big)<0.
    \end{equation}
    Moreover, we have the following uniform convergence
    \begin{equation}\label{unifCV:Dprime}
        \lim_{n\to\infty}\|\mathcal{D}_{n,\lambda}'-\mathcal{D}_{\infty,\lambda}'\|_{L^{\infty}([0,1])}=0.
    \end{equation}
    \item The sequence $\big(\mathcal{D}_{n,\lambda}''(1)\big)_{n\in\mathbb{N}^*}$ converges and the following limit holds
    \begin{equation}\label{limit:Dn2}
        \lim_{n\to\infty}\mathcal{D}_{n,\lambda}''(1)=1-2\lambda^2I_0(\lambda)K_0(\lambda)I_1(\lambda)K_1(\lambda)\in(0,1).
        \end{equation}
    \item For any $b\in(0,1],$ the following asymptotic holds
    \begin{equation}\label{asymp:Dn3}
        \mathcal{D}_{n,\lambda}'''(b)\underset{n\to\infty}{=}2nb^{2n-2}+O\left(b^{2n-2}\right).
    \end{equation}
\end{enumerate}
\end{lem}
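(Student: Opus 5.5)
The plan is to work directly from the explicit formula \eqref{def:Dn}, written in the decomposed form \eqref{decomp:Dn}. I would separate the ``smooth'' pieces, which converge to those of $\mathcal{D}_{\infty,\lambda}$, from the pieces carrying the order $n$: namely $I_n(\lambda)K_n(\lambda)$, $I_n(\lambda b)K_n(\lambda b)$ and $\Lambda_n(\lambda,b)$.

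\textbf{Points $(i)$ and $(ii)$.} The identities \eqref{Dn01} are already in Lemma~\ref{lem:elemD}. At $b=1$ a one-line check works:
\begin{itemize}
\item[] $\Omega_n(\lambda)-\Lambda_1(\lambda,1)=-I_nK_n(\lambda)$ and $\Lambda_1(\lambda,1)-\Omega_n(\lambda)=I_nK_n(\lambda)$,
\item[] so $\mathcal{D}_{n,\lambda}(1)=-(I_nK_n(\lambda))^2+\Lambda_n(\lambda,1)^2=0$.
\end{itemize}

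For $\mathcal{D}_{n,\lambda}'(1)$, I differentiate each factor in $b$ using \eqref{Bessel derivatives}. After substituting $b=1$, the terms pair up into a multiple of $I_n(\lambda)K_n(\lambda)\big[\partial_b(bI_nK_n(\lambda b))-2\lambda I_n'(\lambda)K_n(\lambda)\big]_{b=1}$ plus symmetric terms. These should cancel using the Wronskian \eqref{wronskian}.

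For $\mathcal{D}_{n,\lambda}'(0)$, I use the small-argument expansions \eqref{asymptotic expansion of small argument}:
\begin{itemize}
\item[] $\Lambda_1(\lambda,b)=\tfrac{\lambda b}{2}K_1(\lambda)+O(b^3)$,
\item[] $b\,\Omega_n(\lambda b)=b\big(\tfrac12-\tfrac1{2n}\big)+o(b)$,
\item[] $b\Lambda_n^2=O(b^{2n+1})$.
\end{itemize}
This gives $\mathcal{D}_{n,\lambda}'(0)=\Omega_n(\lambda)\Omega_n^-(\lambda)$. The sign in \eqref{limDnprime0} follows because $x\mapsto xK_1(x)$ has derivative $-xK_0(x)<0$ and tends to $1$ at $0$, so $\lambda K_1(\lambda)<1$.

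The uniform convergence \eqref{unifCV:Dprime} reduces to showing that the following quantities tend to $0$ uniformly on $[0,\lambda]$:
\begin{itemize}
\item[] $\sup_{x\in[0,\lambda]}|x(I_nK_n)'(x)|$ and $\sup_{b\in[0,1]}|\lambda I_n'(\lambda b)K_n(\lambda)|$.
\end{itemize}
For this I use the recurrences $I_n'=I_{n-1}-\tfrac nx I_n$ and $K_n'=-K_{n-1}-\tfrac nxK_n$, together with the bounds $I_{n\pm1}(x)K_n(x)=O(1/n)$ on compact sets, which come from the power series.

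\textbf{Point $(iii)$.} Differentiating twice at $b=1$ gives two contributions:
\begin{itemize}
\item[] the $\mathcal{D}_\infty$-part, which yields $-2\lambda^2I_0K_0I_1K_1$ by Lemma~\ref{lem:elemD}, plus vanishing corrections;
\item[] the growing part, coming from $b\Lambda_n^2$ and from the cross term $I_nK_n(\lambda)\,\partial_b^2(bI_nK_n(\lambda b))$.
\end{itemize}
The growing part involves $\lambda I_n'(\lambda)\approx nI_n(\lambda)$, so it contributes quantities like $n^2(I_nK_n)^2\to\tfrac14$. Collecting the $n^2(I_nK_n)^2$ terms with their coefficients should produce exactly $1$. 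This bookkeeping, in which every $O(n)$ derivative must be expanded to its exact leading constant, is the main obstacle. I would systematically rewrite derivatives through $xI_n'=nI_n+xI_{n+1}$ and $xK_n'=-nK_n-xK_{n-1}$, and use $I_nK_n(\lambda)=\tfrac1{2n}+O(n^{-3})$.

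For the range $(0,1)$ of the limit in \eqref{limit:Dn2}, I use that $x\mapsto xI_1K_1(x)$ and $x\mapsto xI_0K_0(x)$ are bounded by $\tfrac12$. Hence $2\lambda^2I_0K_0I_1K_1<\tfrac12$.

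\textbf{Point $(iv)$.} The dominant term is $b\Lambda_n^2=bI_n(\lambda b)^2K_n(\lambda)^2$. Expanding $I_n(\lambda b)$ in its power series and using $I_n(\lambda)K_n(\lambda)=\tfrac1{2n}(1+O(1/n))$, I get
$$b\Lambda_n^2=\frac{b^{2n+1}}{4n^2}\big(1+O(1/n)\big)\big(1+O(b^2/n)\big),$$
with the expansion remaining differentiable term by term. Differentiating three times gives
$$\frac{(2n+1)2n(2n-1)}{4n^2}\,b^{2n-2}=2nb^{2n-2}+O(b^{2n-2}).$$

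The remaining terms are built from $I_1,K_1$ and from $I_nK_n(\lambda b)$. I need to check that, after the cancellations inherited from the structure of \eqref{decomp:Dn}, their third $b$-derivatives are absorbed in the remainder. I expect this to be delicate: for fixed $b<1$, an $O(1/n)$ bound on them is not obviously $O(b^{2n-2})$. So I would first verify this point carefully, possibly by reading the $O$ as uniform for $b$ in a neighbourhood of $1$, which is where \eqref{asymp:Dn3} is used.
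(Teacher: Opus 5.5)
Your plan follows the paper's route: the splitting \eqref{decomp:Dn}, the Wronskian at $b=1$, small-argument expansions at $b=0$, and $b\Lambda_n^2$ as the only growing piece. However, three of the concrete steps fail as written.

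\textbf{Uniform convergence of $\mathcal{D}_{n,\lambda}'$.} The quantity $\sup_{b\in[0,1]}|\lambda I_n'(\lambda b)K_n(\lambda)|$ does not tend to $0$.
\begin{itemize}
\item From $xI_n'=nI_n+xI_{n+1}$, its value at $b=1$ is $nI_n(\lambda)K_n(\lambda)+O(1/n)\to\frac12$. Indeed $\partial_b\Lambda_n\approx\frac12 b^{n-1}$ by \eqref{svi:diffkLbd}, which is what you yourself use in $(iii)$.
\item Likewise $I_{n-1}(x)K_n(x)$ is not $O(1/n)$: by \eqref{wronskian} it equals $\frac1x-I_n(x)K_{n-1}(x)$.
\item You only need $\partial_b\Lambda_n$ to be bounded, since it multiplies $\Lambda_n=O(1/n)$ in $\partial_b(b\Lambda_n^2)$.
\item For $x(I_nK_n)'$, the pairing you wrote gives $xI_{n-1}K_n-xI_nK_{n-1}-2nI_nK_n$. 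This contains the two order-one terms $xI_{n-1}K_n\approx1$ and $2nI_nK_n\approx1$, so termwise bounds cannot give decay.
\item Use instead $xI_n'=nI_n+xI_{n+1}$ and $xK_n'=-nK_n-xK_{n-1}$. These give $x(I_nK_n)'=xI_{n+1}K_n-xI_nK_{n-1}$, both terms $O(n^{-2})$. Alternatively, simply quote \eqref{decay:InKnHR} as the paper does.
\end{itemize}
By \eqref{decay:InKnHR}, the cross term $I_n(\lambda)K_n(\lambda)\,\partial_b^2\big(bI_n(\lambda b)K_n(\lambda b)\big)$ in $(iii)$ is $O(n^{-2})$, not a growing contribution. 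The whole limit $1$ comes from $2b(\partial_b\Lambda_n)^2+2b\Lambda_n\partial_b^2\Lambda_n\to\frac12+\frac12$ at $b=1$.

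\textbf{Range in \eqref{limit:Dn2}.} The function $x\mapsto xI_0(x)K_0(x)$ is not bounded by $\frac12$. One has $I_0K_0(x)\sim\frac1{2x}\big(1+\frac1{8x^2}\big)$; for example, $2I_0(2)K_0(2)\approx0.519$.

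Your conclusion $2\lambda^2I_0K_0I_1K_1<\frac12$ is nevertheless true. Set $u=\lambda I_1(\lambda)K_0(\lambda)$ and $v=\lambda I_0(\lambda)K_1(\lambda)$. Then \eqref{wronskian} gives $u+v=1$, and $u<v$ because $I_1<I_0$ and $K_0<K_1$. Hence $\lambda^2I_0K_0I_1K_1=uv<\frac14$.

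The paper instead shows that $f$ in \eqref{deff} is decreasing. Its limit at infinity is actually $\frac12$, not $0$, so $(\frac12,1)$ is the sharp range.

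\textbf{Point $(iv)$.} Your suspicion is right, and no cancellation will rescue the statement as written.
\begin{itemize}
\item On compact subsets of $(0,1]$ one has $\mathcal{D}_{n,\lambda}'''-\partial_b^3(b\Lambda_n^2)=\mathcal{D}_{\infty,\lambda}'''+O(1/n)$.
\item $\mathcal{D}_{\infty,\lambda}'''\not\equiv0$. Otherwise $\mathcal{D}_{\infty,\lambda}$ would be a quadratic vanishing at $0$ and doubly at $1$, hence identically zero, contradicting \eqref{sign Dinfty}.
\item So \eqref{asymp:Dn3} fails for all but isolated fixed $b<1$.
\item Reading the $O$ uniformly on a fixed neighbourhood of $1$ does not help, because $b^{2n-2}$ is exponentially small at its left end. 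It only works when $1-b\lesssim 1/n$.
\end{itemize}
The correct statement is $\mathcal{D}_{n,\lambda}'''(b)=2nb^{2n-2}+O(1)$ uniformly on $[\frac12,1]$. This is exactly what Lemma~\ref{lem:asybn} uses, since its remainder is $O(\beta_{n,\lambda})$. Your computation of $\partial_b^3(b\Lambda_n^2)$, together with \eqref{decay:InKnHR} for the $I_nK_n$-terms, proves it. The paper's own proof passes over this point in the same way.
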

\begin{proof}
$(i)$ This point is just a remainder of \eqref{lim det} and \eqref{unifCV:D-2var}.\\
$(ii)$ We start by recalling the decomposition \eqref{decomp:Dn}. Referring to \cite[pp. 27-28]{HR21}, we have that for any $n,k\in\mathbb{N}$ with $n>\frac{k}{2}$,
\begin{equation}\label{decay:InKnHR}
    |\partial_x^k(I_nK_n)(x)|\leqslant\frac{1}{2n-k}\cdot
\end{equation}
    Besides, it is immediate from \eqref{deriv:iterate} that, for any $k\in\mathbb{N}$,
    $$\partial_{b}^{k}\Lambda_{n}(\lambda,b)=\lambda^k I_n^{(k)}(\lambda b)K_n(\lambda)=\left(\frac{\lambda}{2}\right)^{k}K_{n}(\lambda)\sum_{p=0}^{k}\binom{k}{p}I_{n-k+2p}(\lambda b).$$
    In addition, from \eqref{asymp:large-order-In} and \eqref{asymp:large-order-Kn}, we find, for any $k\in\mathbb{Z}$,
    $$I_{n-k}(\lambda b)K_n(\lambda)\underset{n\to\infty}{=}\frac{b^{n-k}}{2}\left(\frac{\lambda}{2}\right)^{-k}n^{k-1}+O\left(b^{n-k}n^{k-2}\right).$$
    Combining the last computations gives, for any $k\in\mathbb{N}$,
    \begin{equation}\label{svi:diffkLbd}
        \partial_{b}^{k}\Lambda_{n}(\lambda,b)\underset{n\to\infty}{=}\frac{b^{n-k}}{2}n^{k-1}+O\left(b^{n-k}n^{k-2}\right).
    \end{equation}
    From \eqref{svi:diffkLbd}, we deduce that
    \begin{equation}\label{svi:diffbL}
        \partial_b\Big(b\Lambda_n^2(\lambda,b)\Big)=\Lambda_n^2(\lambda,b)+2b\Lambda_n(\lambda,b)\partial_b\Lambda_n(\lambda,b)\underset{n\to\infty}{=}\frac{b^{2n}}{2n}+O\left(\frac{b^{2n}}{n^2}\right).
    \end{equation}
    Note that only the last term contributes to the main order asymptotic. Combining \eqref{decomp:Dn}, \eqref{decay:InKnHR} and \eqref{svi:diffbL}, we obtain the uniform convergence \eqref{unifCV:Dprime}.\\
    $(ii)$ One readily has
\begin{align*}
	\mathcal{D}_{n,\lambda}'(b) & =  \big[\Lambda_{1}(\lambda,b)+b\partial_{b}\Lambda_{1}(\lambda,b)\big]\big[b\Omega_{n}(\lambda b)-\Lambda_{1}(\lambda,b)\big]\nonumber\\
	&  \quad+\big[\Omega_{n}(\lambda)-b\Lambda_{1}(\lambda,b)\big]\big[\partial_{b}\Lambda_{1}(\lambda,b)-\Omega_{n}(\lambda b)-b\partial_{b}(\Omega_{n}(\lambda b))\big]\nonumber\\
	&  \quad+\Lambda_{n}^{2}(\lambda,b)+2b\Lambda_{n}(\lambda,b)\partial_{b}\Lambda_{n}(\lambda,b).
\end{align*}
By using \eqref{Bessel derivatives}, we obtain 
\begin{equation*}
	\partial_{b}\Lambda_{n}(\lambda,b)=\lambda I_{n-1}(\lambda b)K_{n}(\lambda)-\frac{n\Lambda_{n}(\lambda,b)}{b}
\end{equation*}
and
\begin{align*}
	\partial_{b}\big(\Omega_{n}(\lambda b)\big)&=\lambda\big[I_{0}(\lambda b)K_{1}(\lambda b)-I_{1}(\lambda b)K_{0}(\lambda b)+I_{n}(\lambda b)K_{n-1}(\lambda b)-I_{n-1}(\lambda b)K_{n}(\lambda b)\big]\nonumber\\
	&\quad+\frac{2}{b}\big[nI_{n}(\lambda b)K_{n}(\lambda b)-I_{1}(\lambda b)K_{1}(\lambda b)\big].
\end{align*}
According to the asymptotic \eqref{asymptotic expansion of small argument} and the fact that $K_{0}$ behaves like a logarithm at $0$, we deduce that $\mathcal{D}_{n,\lambda}'$ admits a limit at $0$, which implies that $\mathcal{D}_{n,\lambda}$ admits an extension of class $C^1$ to $[0,1]$ and  
\begin{equation}\label{expression:Dnprime0}
    \mathcal{D}_{n,\lambda}'(0)=\Omega_{n}(\lambda)\frac{n(\lambda K_{1}(\lambda)-1)+1}{2n}=\Omega_{n}(\lambda)\Omega_{n}^{-}(\lambda).
\end{equation}
Consider the function $\varphi$ defined by 
$$\forall \lambda>0,\quad\varphi(\lambda)\triangleq\lambda K_{1}(\lambda).$$
From \eqref{Bessel derivatives}, we get 
$$\varphi'(\lambda)=K_{1}(\lambda)+\lambda K_{1}'(\lambda)=-\lambda K_{0}(\lambda)<0.$$
Hence $\varphi$ is strictly decreasing on $(0,\infty).$ Moreover, in view of the asymptotic \eqref{asymptotic expansion of small argument}, we infer 
$$\lim_{\lambda\rightarrow 0^{+}}\varphi(\lambda)=1.$$
Thus,
\begin{equation}\label{e-K1}
	\forall\lambda>0,\quad\varphi(\lambda)\in(0,1).
\end{equation}
In particular,
$$1-\lambda K_1(\lambda)>0.$$
Passing to the limit $n\to\infty$ in \eqref{expression:Dnprime0}, we find
$$\lim_{n\to\infty}\mathcal{D}_{n,\lambda}'(0)=\tfrac{1}{2}I_1(\lambda)K_1(\lambda)\big(\lambda K_1(\lambda)-1\big)<0.$$
On the other hand, by continuity of the modified Bessel functions on $(0,\infty),$ we obtain after straightforward simplifications
$$\mathcal{D}_{n,\lambda}'(1)=\lambda I_{n}(\lambda)K_{n}(\lambda)\Big[I_{n-1}(\lambda)K_{n}(\lambda)+I_{n}(\lambda)K_{n-1}(\lambda)-I_{0}(\lambda)K_{1}(\lambda)-I_{1}(\lambda)K_{0}(\lambda)\Big].$$
Combining the previous expression with \eqref{wronskian}, we finally get
$$\mathcal{D}_{n,\lambda}'(1)=I_{n}(\lambda)K_{n}(\lambda)(1-1)=0.$$
$(iii)$ Differentiating \eqref{svi:diffbL} and using \eqref{svi:diffkLbd}, we infer
\begin{equation}\label{svi:diff2bL}
    \begin{aligned}
        \partial_b^2\Big(b\Lambda_n^2(\lambda,b)\Big)&\,\,\,=4\Lambda_n(\lambda,b)\partial_b\Lambda_n(\lambda,b)+2b\big(\partial_b\Lambda_n(\lambda,b)\big)^2+2b\Lambda_n(\lambda,b)\partial_b^2\Lambda_n(\lambda,b)\\
        &\underset{n\to\infty}{=}b^{2n-1}+O\left(\frac{b^{2n-1}}{n}\right).
    \end{aligned}
\end{equation}
    Note that only the last two terms contribute to the main order asymptotic. In particular,
    \begin{equation}\label{diff2BL-at-1}
        \partial_b^2\Big(b\Lambda_n^2(\lambda,b)\Big)|_{b=1}\underset{n\to\infty}{\longrightarrow}1.
    \end{equation}
    Putting together \eqref{decomp:Dn}, \eqref{diff2Dinfty}, \eqref{decay:InKnHR} and \eqref{diff2BL-at-1}, we get the following pointwise convergence
    $$\mathcal{D}_{n,\lambda}''(1)\underset{n\to\infty}{\longrightarrow}\mathcal{D}_{\infty,\lambda}''(1)+1=1-2\lambda^2I_0(\lambda)K_0(\lambda)I_1(\lambda)K_1(\lambda).$$
    For simplicity, we denote
    $$P_n(x)\triangleq I_n(x)K_n(x)$$
and
    \begin{equation}\label{deff}
        f(\lambda)\triangleq1-2\lambda^2P_0(\lambda)P_1(\lambda).
    \end{equation}
    One easily has
    $$f'(\lambda)=-2\Big(2\lambda P_0(\lambda)P_1(\lambda)+\lambda^2\big(P_0'(\lambda)P_1(\lambda)+P_0(\lambda)P_1'(\lambda)\big)\Big).$$
    Using the relations \eqref{Bessel derivatives}, we find
    \begin{equation}\label{diffP0}
        \begin{aligned}
            P_0'&=I_0'K_0+I_0K_0'=I_1K_0-I_0K_1
        \end{aligned}
    \end{equation}
    and
    \begin{equation*}
        \begin{aligned}
            P_1'(\lambda)&=I_1'(\lambda)K_1(\lambda)+I_1(\lambda)K_1'(\lambda)\\
            &=\left(I_0(\lambda)-\frac{I_1(\lambda)}{\lambda}\right)K_1(\lambda)+I_1(\lambda)\left(-K_0(\lambda)-\frac{K_1(\lambda)}{\lambda}\right)\\
            &=-P_0'(\lambda)-\frac{2}{\lambda}P_1(\lambda).
        \end{aligned}
    \end{equation*}
    With this in hand we can rewrite $f'$ as
    $$f'(\lambda)=-2\lambda^2\big(P_1(\lambda)-P_0(\lambda)\big)P_0'(\lambda).$$
    Referring to Appendix \ref{appendix Bessel}, the function $(n,x)\mapsto P_n(x)$ is strictly decreasing in each argument. Therefore, $f'<0$ on $(0,\infty),$ which implies in turn that 
    \begin{equation}\label{decayf}
        f\textnormal{ is strictly decreasing on }(0,\infty).
    \end{equation} In addition, the asymptotics \eqref{asymptotic expansion of small argument} and \eqref{asymp large z} provide
    \begin{equation}\label{limitsf}
        \lim_{\lambda\to0}f(\lambda)=1\qquad\textnormal{and}\qquad\lim_{\lambda\to\infty}f(\lambda)=0.
    \end{equation}
    This proves that 
    $$\forall\lambda>0,\quad1-2\lambda^2I_0(\lambda)K_0(\lambda)I_1(\lambda)K_1(\lambda)=f(\lambda)\in(0,1).$$
    $(iv)$ Differentiating \eqref{svi:diff2bL} and using \eqref{svi:diffkLbd}, we infer
    \begin{align*}
        \partial_b^3\Big(b\Lambda_n^2(\lambda,b)\Big)&\,\,\,=6\big(\partial_b\Lambda_n(\lambda,b)\big)^2+6\Lambda_n(\lambda,b)\partial_b^2\Lambda_n(\lambda,b)+6b\partial_b\Lambda_n(\lambda,b)\partial_b^2\Lambda_n(\lambda,b)+2b\Lambda_n(\lambda,b)\partial_b^3\Lambda_n(\lambda,b)\\
        &\underset{n\to\infty}{=}2nb^{2n-2}+O(b^{2n-2}).
    \end{align*}
    Note that only the last two terms contribute to the main order asymptotic. This combined with \eqref{decomp:Dn} and \eqref{decay:InKnHR} gives \eqref{asymp:Dn3}.
    This concludes the proof of Lemma \ref{lem:cvDnb}.
\end{proof}

With this in hand, we can now study the existence of candidates bifurcation points. In what follows, we denote 
$$Z_{n,\lambda}\triangleq\Big\{b\in(0,1)\quad\textnormal{s.t.}\quad \mathcal{D}_{n,\lambda}(b)=0\Big\}.$$
We have the following result.
\begin{lem}\label{lem:accu1}
    There exists $N(\lambda)\in\mathbb{N}^*$ such that, for any $n\in\mathbb{N}^*$, $n\geqslant N(\lambda)$, we have
    \begin{equation}\label{nonvoid zeroset}
        Z_{n,\lambda}\neq\varnothing.
    \end{equation}
    Moreover, the set $Z_{n,\lambda}$ is discrete and
    $$\mathtt{diam}(Z_{n,\lambda}\cup\{1\})\underset{n\to\infty}{\longrightarrow}0,$$
    where $\mathtt{diam}$ is the diameter defined by
    $$\mathtt{diam}(A)\triangleq\sup_{a,b\in A}|a-b|.$$
    This means that the zeros of $\mathcal{D}_{n,\lambda}$ inside $(0,1)$ accumulate to $1$ as $n\to\infty.$
\end{lem}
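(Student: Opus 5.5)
Existence comes from a sign change of $\mathcal{D}_{n,\lambda}$ near $b=1$. Fix $\lambda>0$. By Lemma \ref{lem:cvDnb}, $\mathcal{D}_{n,\lambda}(1)=\mathcal{D}_{n,\lambda}'(1)=0$ and $\mathcal{D}_{n,\lambda}''(1)\to f(\lambda)\in(0,1)$. Moreover, \eqref{asymp:Dn3} gives control of the third derivative near $1$ of size $O(n)$. Choose a window of width $\delta_n\ll 1/n$, say $\delta_n=n^{-2}$. Taylor's formula with integral remainder at $b=1$ then gives, for $b=1-\delta_n$,
$$\mathcal{D}_{n,\lambda}(1-\delta_n)=\tfrac12\mathcal{D}_{n,\lambda}''(1)\delta_n^2+O\big(n\delta_n^3\big)>0$$
for $n$ large. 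This needs a uniform bound $\sup_{b\in[1-\delta_n,1]}|\mathcal{D}_{n,\lambda}'''(b)|\lesssim n$. I would get it from the same expansion \eqref{svi:diffkLbd} used for \eqref{asymp:Dn3}, together with \eqref{decay:InKnHR} for the $I_nK_n$ terms and smoothness of $\mathcal{D}_\infty$; the remainders are uniform in $b\in[1/2,1]$, since only powers $b^{2n-k}\le 1$ appear.

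Next, pick any fixed $b_0\in(0,1)$. By \eqref{sign Dinfty} and the uniform convergence of Lemma \ref{lem:cvDnb}(i), $\mathcal{D}_{n,\lambda}(b_0)<0$ for $n$ large. The intermediate value theorem then gives a zero in $(b_0,1-\delta_n)$, which proves \eqref{nonvoid zeroset}. Discreteness of $Z_{n,\lambda}$ follows from the analyticity of $b\mapsto\mathcal{D}_{n,\lambda}(b)$ on $(0,\infty)$ noted in the proof of Lemma \ref{lem:elemD}, provided $\mathcal{D}_{n,\lambda}\not\equiv0$, which holds since it is negative at $b_0$. Zeros could only accumulate at $0$ or $1$, so it remains to confine them near $1$.

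Confinement near $0$ uses \eqref{limDnprime0} and \eqref{unifCV:Dprime}: there are $\eta>0$ and $c>0$ with $\mathcal{D}_{n,\lambda}'\le -c$ on $[0,\eta]$ for all large $n$. This holds because $\mathcal{D}_{\infty,\lambda}'$ is continuous on $[0,1]$ with negative value at $0$; I would check that $\mathcal{D}_{\infty,\lambda}$ is $C^1$ up to $0$ using the small-argument asymptotics. Since $\mathcal{D}_{n,\lambda}(0)=0$, it follows that $\mathcal{D}_{n,\lambda}<0$ on $(0,\eta]$. On any compact $[\eta,1-\varepsilon]$, $\mathcal{D}_{\infty,\lambda}\le -c_\varepsilon<0$, so uniform convergence makes $\mathcal{D}_{n,\lambda}<0$ there for $n\ge N_\varepsilon$. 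Hence $Z_{n,\lambda}\subset(1-\varepsilon,1)$ for $n\ge N_\varepsilon$, which is exactly $\mathtt{diam}(Z_{n,\lambda}\cup\{1\})\le\varepsilon$.

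The main obstacle is the uniform third-derivative bound on $[1-\delta_n,1]$, because derivatives of $\Lambda_n$ grow polynomially in $n$. If this is awkward, an alternative is to avoid Taylor and show $\mathcal{D}_{n,\lambda}(1-t/n^2)>0$ directly. That route requires the large-order asymptotics of $I_n(\lambda b)K_n(\lambda)$, which behave like $b^n/(2n)$ with $b^n\approx 1$ in this window, plugged into \eqref{decomp:Dn}, where the $\mathcal{D}_\infty$ part is $O(\delta_n^2)$ with the coefficient from \eqref{diff2Dinfty}.
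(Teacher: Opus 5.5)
Your proof is correct. Its confinement half is essentially the paper's argument, and its existence half takes a more quantitative route than necessary.

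\textbf{Confinement.} The paper divides by $b$ and uses $F_{n,\lambda}(b)=\mathcal{D}_{n,\lambda}(b)/b=\int_0^1\mathcal{D}_{n,\lambda}'(\tau b)\,d\tau$. By \eqref{unifCV:Dprime} this converges uniformly on $[0,1]$ to $F_{\infty,\lambda}$, which is negative on $[0,1)$. That is your split (derivative bound on $[0,\eta]$, value bound on $[\eta,1-\varepsilon]$) packaged into a single step.

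\textbf{Existence.} This is where you differ. The paper gets the sign change from endpoint data at each fixed $n$:
\begin{itemize}
\item $\mathcal{D}_{n,\lambda}(0)=0$ and $\mathcal{D}_{n,\lambda}'(0)<0$ give negativity just to the right of $0$;
\item $\mathcal{D}_{n,\lambda}(1)=\mathcal{D}_{n,\lambda}'(1)=0$ and $\mathcal{D}_{n,\lambda}''(1)>0$ give positivity on some $n$-dependent left neighbourhood of $1$.
\end{itemize}
Existence only needs one positive value for each fixed $n$, and $\mathcal{D}_{n,\lambda}$ is analytic at $b=1$. So the uniform third-derivative bound you call the main obstacle is not actually needed; the fixed-$n$ second-derivative test suffices.

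Your quantitative version is still valid and easy to close. By \eqref{deriv:iterate}, $\partial_b^k\Lambda_n(\lambda,b)=\lambda^kI_n^{(k)}(\lambda b)K_n(\lambda)$, and $I_n^{(k)}$ is a positive combination of the $I_j$'s, hence nonnegative and increasing. Therefore $\sup_{b\in[0,1]}\partial_b^k\Lambda_n(\lambda,b)=\partial_b^k\Lambda_n(\lambda,1)=O(n^{k-1})$ by \eqref{svi:diffkLbd}. Together with \eqref{decay:InKnHR} and the smoothness of the remaining terms near $b=1$, this gives $\sup_{[1/2,1]}|\mathcal{D}_{n,\lambda}'''|=O(n)$.

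Your route also buys more than the lemma asks. The same Taylor estimate shows $\mathcal{D}_{n,\lambda}>0$ on all of $[1-c/n,1)$ for some small $c>0$. So the zeros stay at distance $\gtrsim 1/n$ from $1$, which anticipates the scaling in Lemma \ref{lem:asybn}.

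One small imprecision: zeros cannot accumulate at $1$ either, since $1$ lies inside the analyticity domain $(0,\infty)$ of $b\mapsto\mathcal{D}_{n,\lambda}(b)$. Only $0$ is a possible accumulation point, and your near-$0$ argument rules that out for large $n$.
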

\begin{proof}
$\blacktriangleright$ \textbf{Existence :} According to \eqref{Dn01} and \eqref{limDnprime0}, we have for $n$ large enough,
$$\mathcal{D}_{n,\lambda}(0)=0\qquad\textnormal{and}\qquad\mathcal{D}_{n,\lambda}'(0)<0.$$
Therefore, $\mathcal{D}_{n,\lambda}$ is strictly negative near $b=0.$ On the other hand, by virtue of \eqref{Dn01}, \eqref{Dnprime01} and \eqref{limit:Dn2}, we have that for $n$ large enough
$$\mathcal{D}_{n,\lambda}(1)=\mathcal{D}_{n,\lambda}'(1)=0\qquad\textnormal{and}\qquad\mathcal{D}_{n,\lambda}''(1)>0.$$
Therefore, $\mathcal{D}_{n,\lambda}$ is strictly positive near $b=1.$ The intermediate value theorem implies \eqref{nonvoid zeroset}. The fact that $Z_{n,\lambda}$ is discrete is a consequence of the real analyticity of the modified functions and the definition of $\mathcal{D}_{n,\lambda}$ in \eqref{def:Dn}.\\
$\blacktriangleright$ \textbf{Renormalized function :} We consider the function $F_{n,\lambda}$ defined on $[0,1]$ by 
    $$F_{n,\lambda}(b)\triangleq\begin{cases}
        \frac{\mathcal{D}_{n,\lambda}(b)}{b}, & \textnormal{if }b\in(0,1],\\
        \Omega_n(\lambda)\Omega_n^-(\lambda), & \textnormal{if }b=0.
    \end{cases}$$
    The function $F_{n,\lambda}$ is continuous on $(0,1]$ and prolongates by continuity in $0$ by virtue of \eqref{Dnprime01} and the l'H\^opital rule since
$$\lim_{b\to0^+}F_{n,\lambda}(b)=\mathcal{D}_n'(0)=\Omega_n(\lambda)\Omega_n^-(\lambda)=F_{n,\lambda}(0).$$
By construction, one has
    $$Z_{n,\lambda}=\Big\{b\in(0,1)\quad\textnormal{s.t.}\quad F_{n,\lambda}(b)=0\Big\}.$$
    Note that
$$\lim_{n\to\infty}F_{n,\lambda}(0)=\tfrac{1}{2}I_1(\lambda)K_1(\lambda)\big(\lambda K_1(\lambda)-1\big)<0.$$
$\blacktriangleright$ \textbf{Limiting profile :} We consider the function $F_{\infty,\lambda}$ defined on $[0,1]$ by
$$F_{\infty,\lambda}(b)\triangleq\begin{cases}
    \frac{\mathcal{D}_{\infty,\lambda}(b)}{b}, & \textnormal{if }b\in(0,1],\\
    \tfrac{1}{2}I_1(\lambda)K_1(\lambda)\big(\lambda K_1(\lambda)-1\big), & \textnormal{if }b=0.
\end{cases}$$
Using the l'H\^opital rule and \eqref{limDnprime0}-\eqref{unifCV:Dprime}, we find  
$$\lim_{b\to0^+}F_{\infty,\lambda}(b)=\mathcal{D}_{\infty,\lambda}'(0)=\tfrac{1}{2}I_1(\lambda)K_1(\lambda)\big(\lambda K_1(\lambda)-1\big)=F_{\infty,\lambda}(0).$$
Therefore, the function $F_{\infty,\lambda}$ is continuous on $[0,1].$ Besides, \eqref{e-K1} and \eqref{symmetry Bessel} imply that 
\begin{equation}\label{Finfty0}
    F_{\infty,\lambda}(0)<0.
\end{equation}
Moreover, making appeal to \eqref{sign Dinfty}, we have that
\begin{equation}\label{Finftyneg}
    \forall b\in(0,1),\quad F_{\infty,\lambda}(b)<0.
\end{equation}
$\blacktriangleright$ \textbf{Uniform convergence toward the limiting profile :}
    By Taylor's integral formula, one has 
$$F_{n,\lambda}(b)-F_{\infty,\lambda}(b)=\int_0^1\big(\mathcal{D}_{n,\lambda}'(\tau b)-\mathcal{D}_{\infty\lambda}'(\tau b)\big)d\tau.$$
Therefore, using the uniform convergence \eqref{unifCV:Dprime}, we find
$$\|F_{n,\lambda}-F_{\infty,\lambda}\|_{L^{\infty}([0,1])}\leqslant\|\mathcal{D}_{n,\lambda}'-\mathcal{D}_{\infty,\lambda}'\|_{L^{\infty}([0,1])}\underset{n\to\infty}{\longrightarrow}0.$$
This proves that $(F_{n,\lambda})_{n\in\mathbb{N}^*}$ converges uniformly towards $F_{\infty,\lambda}$ on $[0,1].$\\
$\blacktriangleright$ \textbf{Conclusion :}
 Fix $\varepsilon\in(0,1).$ By virtue of \eqref{Finfty0} and \eqref{Finftyneg}, one has
 $$\max_{b\in[0,1-\varepsilon]}F_{\infty,\lambda}(b)<0.$$
 Combining this with the uniform convergence, one obtains the existence of $N(\varepsilon,\lambda)\in\mathbb{N}^*$ such that
$$\forall n\in\mathbb{N}^*,\quad n\geqslant N(\varepsilon,\lambda)\quad\Rightarrow\quad\forall b\in[0,1-\varepsilon],\quad F_{n,\lambda}(b)<0.$$
This proves that
$$\forall n\geqslant N(\varepsilon,\lambda),\quad Z_{n,\lambda}\subset(1-\varepsilon,1).$$
The arbitrariness of $\varepsilon>0$ gives desired claim.
\end{proof}
In what follows, we consider $b_{n,\lambda}\in Z_{n,\lambda}$, for $n\geqslant N(\lambda)$, any possible zero of $\mathcal{D}_{n,\lambda}$.
In view of Lemma \ref{lem:accu1}, we can write
$$b_{n,\lambda}=1-\beta_{n,\lambda}\qquad\textnormal{with}\qquad\beta_{n,\lambda}>0\qquad\textnormal{and}\qquad\beta_{n,\lambda}\underset{n\to\infty}{\longrightarrow}0.$$
Our next goal is to give an asymptotic of $\beta_{n,\lambda}$ as $n\to\infty.$
\begin{lem}\label{lem:asybn}
    The following asymptotic holds
    \begin{equation}\label{asy:bn}
        \beta_{n,\lambda}\underset{n\to\infty}{=}\frac{\beta(\lambda)}{n}+O\left(\frac{1}{n^2}\right).
    \end{equation}
    The function $\beta:(0,\infty)\to(0,\infty)$ is strictly decreasing and satisfies the lower bound
    \begin{equation}\label{bndbeta}
        \beta(\lambda)>\frac{1}{2\lambda I_0(\lambda)K_{1}(\lambda)}>0.
    \end{equation}
\end{lem}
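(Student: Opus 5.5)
The plan is to rescale the variable near $b=1$ as $b=1-\frac{s}{n}$ and to show that $n^2\mathcal{D}_{n,\lambda}\big(1-\tfrac{s}{n}\big)$ converges, uniformly for $s$ in compact sets and with an $O(1/n)$ error, to an explicit profile $g_\lambda(s)$. The number $\beta(\lambda)$ will then be the unique positive zero of $g_\lambda$.

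\textbf{Handling the $\Lambda_n$ term.} As explained in the introduction, a Taylor expansion of $\Lambda_n$ around $b=1$ cannot be truncated. Instead I would use the power series of $I_n$. From $I_n(x)=\frac{(x/2)^n}{n!}\big(1+O(x^2/n)\big)$, uniformly for $x$ in compact sets, one gets
$$I_n(\lambda b)=b^nI_n(\lambda)\big(1+O(1/n)\big)$$
uniformly in $b\in[0,1]$. Hence
$$\Lambda_n(\lambda,b)=b^nP_n(\lambda)\big(1+O(1/n)\big),\qquad P_n(\lambda)=\tfrac{1}{2n}+O(n^{-2}),$$
which gives $b\Lambda_n^2\big(\lambda,1-\tfrac{s}{n}\big)=\frac{e^{-2s}}{4n^2}+O(n^{-3})$ locally uniformly in $s$. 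This sums all Taylor orders at once and bypasses the obstacle.

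\textbf{The two brackets.} I rewrite the factors of \eqref{def:Dn} as
$$\Omega_n(\lambda)-b\Lambda_1=K_1(\lambda)\big(I_1(\lambda)-bI_1(\lambda b)\big)-P_n(\lambda),$$
$$\Lambda_1-b\Omega_n(\lambda b)=I_1(\lambda b)\big(K_1(\lambda)-bK_1(\lambda b)\big)+bP_n(\lambda b).$$
The derivative formulas $\partial_b\big(bI_1(\lambda b)\big)=\lambda bI_0(\lambda b)$ and $\partial_b\big(bK_1(\lambda b)\big)=-\lambda bK_0(\lambda b)$ give, with $a\triangleq\lambda I_0(\lambda)K_1(\lambda)$ and $c\triangleq\lambda I_1(\lambda)K_0(\lambda)$,
$$\Omega_n(\lambda)-b\Lambda_1=\frac{as-\frac12}{n}+O(n^{-2}),\qquad \Lambda_1-b\Omega_n(\lambda b)=\frac{\frac12-cs}{n}+O(n^{-2}).$$
Here $P_n(\lambda b)-P_n(\lambda)$ is negligible by \eqref{decay:InKnHR}, whose bound improves to $O(n^{-3})$ for the derivative. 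The Wronskian \eqref{wronskian} gives $a+c=1$, so
$$n^2\mathcal{D}_{n,\lambda}\big(1-\tfrac sn\big)\underset{n\to\infty}{=}g_\lambda(s)+O\Big(\frac{1+s^2}{n}\Big),\qquad g_\lambda(s)\triangleq-acs^2+\frac s2-\frac{1-e^{-2s}}{4}.$$

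\textbf{Study of $g_\lambda$.} We have $g_\lambda(0)=g_\lambda'(0)=0$ and $g_\lambda''(0)=1-2ac=f(\lambda)>0$, with $f$ as in \eqref{deff}, which is consistent with \eqref{limit:Dn2}. Moreover $g_\lambda'''(s)=-2e^{-2s}<0$ and $g_\lambda(s)\to-\infty$ as $s\to\infty$. So $g_\lambda$ is positive and then negative on $(0,\infty)$, has exactly one zero $\beta(\lambda)>0$, and $g_\lambda'(\beta(\lambda))<0$, so the zero is simple. For the lower bound, at $s_0=\frac1{2a}$ the relation $c=1-a$ gives $-acs_0^2+\frac{s_0}{2}=\frac14$, hence $g_\lambda(s_0)=\frac{e^{-2s_0}}{4}>0$ and therefore $\beta(\lambda)>\frac{1}{2a}$, which is \eqref{bndbeta}. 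For monotonicity, $ac=\lambda^2P_0P_1=\frac{1-f(\lambda)}{2}$ is strictly increasing by \eqref{decayf}. Thus $\partial_\lambda g_\lambda(\beta)=-(ac)'\beta^2<0$, and the implicit function theorem gives $\beta'(\lambda)=-\partial_\lambda g/\partial_sg<0$.

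\textbf{Localization of the zeros and the error term.} Lemma \ref{lem:accu1} only gives $\beta_{n,\lambda}\to0$, whereas I need $n\beta_{n,\lambda}$ to stay bounded. I expect this a priori bound to be the main obstacle. For $s\in[S,\varepsilon n]$ I would show $\mathcal{D}_{n,\lambda}(1-\frac sn)<0$. The term $\mathcal{D}_\infty$ behaves like $-ac\,s^2/n^2$, using \eqref{diff2Dinfty} and Taylor expansion on $[1-\varepsilon,1]$. The correction terms in \eqref{decomp:Dn} are $O\big((1+s)/n^2\big)$, by the bracket estimates above together with $P_n\leqslant\frac1{2n}$, and $b\Lambda_n^2\leqslant\frac{1}{4n^2}$. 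So negativity holds once $S$ is large. The positivity of $g_\lambda$ on $(0,\beta(\lambda))$ then needs care near $s=0$. There the bound $n^2\mathcal{D}_{n,\lambda}\geqslant c_0s^2-O(s^2/n)$ follows from $\mathcal{D}_{n,\lambda}(1)=\mathcal{D}_{n,\lambda}'(1)=0$ together with a uniform control of $\mathcal{D}_{n,\lambda}''$ near $1$ coming from the same $b^n$ representation. This excludes zeros with $s$ small. Every zero therefore satisfies $n\beta_{n,\lambda}\in[\delta,S]$. Finally, since $g_\lambda'(\beta(\lambda))\neq0$ and the error is $O(1/n)$ uniformly on $[\delta,S]$, every zero satisfies $n\beta_{n,\lambda}=\beta(\lambda)+O(1/n)$, which is \eqref{asy:bn}.
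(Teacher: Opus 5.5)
Your proposal is correct, and it follows a genuinely different route from the paper.

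The paper never expands $\mathcal{D}_{n,\lambda}$ itself near $b=1$. It divides out the double zero through the Taylor identity $\mathcal{D}_{n,\lambda}''(1)=\beta_{n,\lambda}\int_0^1(1-t)^2\mathcal{D}_{n,\lambda}'''(1-t\beta_{n,\lambda})\,dt$ and inserts $\mathcal{D}_{n,\lambda}'''(b)\approx 2nb^{2n-2}$. This leaves an equation of the form $\Phi(n\beta_{n,\lambda})\approx f(\lambda)$, where $\Phi(x)=2x\int_0^1(1-t)^2e^{-2xt}\,dt$. Boundedness of $n\beta_{n,\lambda}$ is then obtained by contradiction (the integral tends to $1>f(\lambda)$ if $n\beta_{n,\lambda}\to\infty$). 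The limit follows by compactness and injectivity of $\Phi$, the rate by the mean value theorem, and \eqref{bndbeta} by evaluating $1-\Phi$ at $\frac{1}{2a}$.

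Your profile is the same object in disguise, since $g_\lambda(s)=\frac{s^2}{2}\big(f(\lambda)-\Phi(s)\big)$, so both constructions produce the same $\beta(\lambda)$.

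The paper's normalization by $\beta_{n,\lambda}^2$ buys economy: one equation rules out both degenerate regimes $n\beta_{n,\lambda}\to0$ and $n\beta_{n,\lambda}\to\infty$, because $\Phi(0^+)=0<f(\lambda)<1=\lim_{x\to\infty}\Phi(x)$. Your additive $O(1/n)$ error on compact sets cannot see the $s^2$ degeneracy at $s=0$. That is what forces your three-region argument. Your small-$s$ step also genuinely needs derivative asymptotics of $\Lambda_n$, for instance \eqref{svi:diffkLbd} or termwise differentiation of the power series of $I_n(\lambda b)$, because the factor $1+O(1/n)$ cannot be differentiated as it stands.

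Your route gains the following in exchange:
\begin{enumerate}
\item Your main step only uses asymptotics of $\Lambda_n$ itself, not of its third derivative.
\item Your a priori bound $n\beta_{n,\lambda}\leqslant S$ is direct rather than by contradiction.
\item \eqref{bndbeta} becomes the one-line evaluation $g_\lambda\big(\frac{1}{2a}\big)=\frac14e^{-1/a}>0$.
\item $g_\lambda'(\beta(\lambda))<0$ comes for free.
\end{enumerate}

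That last fact is exactly the input of Lemma \ref{lem:asyan}. The same bracket computation performed on $\partial_b$ gives $\mathcal{D}_{n,\lambda}'\big(1-\frac{s}{n}\big)=\frac{1}{2n}\big(4ac\,s-1+e^{-2s}\big)+O(n^{-2})$. At the zero this means $\alpha(\lambda)=-g_\lambda'(\beta(\lambda))=\frac12\big(4ac\,\beta(\lambda)-1+e^{-2\beta(\lambda)}\big)>0$.

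This differs from \eqref{def:alphalbd}, which has $c-a$ in place of $-1$. When \eqref{decomp:Dn} is differentiated there, the term $P_n(\lambda)\partial_b[\Lambda_1-b\Omega_n(\lambda b)]$ should enter with a minus sign. Only your expression vanishes at $s=0$, as $\mathcal{D}_{n,\lambda}'(1)=0$ requires. The positivity of $\alpha(\lambda)$ is unaffected.
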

\begin{rem} \label{rem_beta_lambda}
    As we shall see in the proof, the number $\beta(\lambda)$ is explicitly given by the formula \eqref{def:betalbd}.
\end{rem}
\begin{proof}
    Recall from \eqref{Dn01} and \eqref{Dnprime01} that $\mathcal{D}_{n,\lambda}(1)=\mathcal{D}_{n,\lambda}'(1)=0.$ Therefore, Taylor's integral formula gives
    \begin{align*}
        0&=\mathcal{D}_{n,\lambda}(b_{n,\lambda})=\mathcal{D}_{n,\lambda}(1-\beta_{n,\lambda})=\frac{\beta_{n,\lambda}^2}{2}\mathcal{D}_{n,\lambda}''(1)-\frac{\beta_{n,\lambda}^3}{2}\int_{0}^{1}(1-t)^2\mathcal{D}_{n,\lambda}'''(1-t\beta_{n,\lambda}) \, dt.
    \end{align*}
    Since $\beta_{n,\lambda}\neq0$, we obtain
    $$\mathcal{D}_{n,\lambda}''(1)=\beta_{n,\lambda}\int_{0}^{1}(1-t)^2\mathcal{D}_{n,\lambda}'''(1-t\beta_{n,\lambda})\,dt.$$
    Now making appeal to \eqref{asymp:Dn3}, we infer 
    \begin{align*}
        \beta_{n,\lambda}\int_{0}^{1}(1-t)^2\mathcal{D}_{n,\lambda}'''(1-t\beta_{n,\lambda})\,dt\underset{n\to\infty}{=}2n\beta_{n,\lambda}\int_{0}^1(1-t)^2(1-t\beta_{n,\lambda})^{2n-2}\,dt+O(\beta_{n,\lambda}).
    \end{align*}
    In what follows, we denote
    $$c_{n,\lambda}\triangleq n\beta_{n,\lambda},$$
    which we expect to be of order $1$ as $n$ increases.
    Now, recalling \eqref{limit:Dn2}, we deduce
    \begin{equation}\label{lim:calIn}
        1-2\lambda^2I_0(\lambda)K_0(\lambda)I_1(\lambda)K_1(\lambda)=\lim_{n\to\infty}\mathcal{I}_{n,\lambda},\qquad\mathcal{I}_{n,\lambda}\triangleq 2c_{n,\lambda}\int_{0}^{1}(1-t)^2\left(1-t\frac{c_{n,\lambda}}{n}\right)^{2n-2}\,dt.
    \end{equation}
    $\blacktriangleright$ \textbf{Boundedness of $(c_{n,\lambda})_{n\in\mathbb{N}^*}$ :} Assume, in view of a contradiction, that the sequence $(c_{n,\lambda})_{n\in\mathbb{N}^*}$ is unbounded. Up to taking a subsequence, we can assume $c_{n,\lambda}\underset{n\to\infty}{\longrightarrow}\infty.$ Performing the linear change of variables $t=\frac{u}{2c_{n,\lambda}}$, we obtain
    $$\mathcal{I}_{n,\lambda}=\int_{0}^{2c_{n,\lambda}}\left(1-\frac{u}{2c_{n,\lambda}}\right)^2\left(1-\frac{u}{2n}\right)^{2n-2}\,du.$$
    By dominated convergence theorem, we deduce that
    $$\lim_{n\to\infty}\mathcal{I}_{n,\lambda}=\int_{0}^{\infty}e^{-u}\,du=1.$$
    This enters in contradiction with \eqref{lim:calIn} and \eqref{limit:Dn2}. Therefore, the sequence $(c_{n,\lambda})_{n\in\mathbb{N}^*}$ is bounded.\\
    $\blacktriangleright$ \textbf{First asymptotic :} Applying Bolzano-Weierstrass theorem, there exists $(c_{n_k,\lambda})_{k\in\mathbb{N}}$ a converging subsequence with limit $c_{\infty,\lambda}\geqslant0$. Still by dominated convergence, one gets
    $$1-2\lambda^2I_0(\lambda)K_0(\lambda)I_1(\lambda)K_1(\lambda)=\lim_{k\to\infty}\mathcal{I}_{n_k,\lambda}=2c_{\infty,\lambda}\int_{0}^{1}(1-t)^2e^{-2c_{\infty,\lambda}t}\,dt\triangleq\Phi(c_{\infty,\lambda}).$$
    By integrations by parts, we find
    \begin{equation}\label{def:Phi}
        \Phi(x)=\frac{2x^2-2x+1-e^{-2x}}{2x^2}\cdot
    \end{equation}
    One readily has
    $$\Phi'(x)=\frac{g(x)}{x^3},\qquad g(x)\triangleq(x-1)+(x+1)e^{-2x}.$$
    It is straightforward that $$g'(x)=1-(2x+1)e^{-2x}\qquad\textnormal{and}\qquad g''(x)=4xe^{-2x}.$$
    From this, we deduce that $g'$ is strictly increasing on $(0,\infty)$ and that $g'(0)=0.$ Therefore, $g'>0$ on $(0,\infty),$ which means that $g$ is strictly increasing on $(0,\infty).$ But $g(0)=0$, so $g$ is strictly positive on $(0,\infty).$ This implies in turn that $\Phi$ is strictly increasing on $(0,\infty).$ In addition, using the Taylor expansion of the exponential at $0$ and computing directly the limit at infinity, we find
    \begin{equation}\label{limitsPhi}
        \lim_{x\to0}\Phi(x)=0\qquad\textnormal{and}\qquad\lim_{x\to\infty}\Phi(x)=1.
    \end{equation} Invoking the bijection theorem, the application $\Phi:(0,\infty)\to(0,1)$ is a bijection and, recalling \eqref{limit:Dn2},
    $$c_{\infty,\lambda}=\Phi^{-1}\Big(1-2\lambda^2I_0(\lambda)K_0(\lambda)I_1(\lambda)K_1(\lambda)\Big)>0.$$
    The limit of the subsequence being uniquely determined, then the bounded sequence $(c_{n,\lambda})_{n\in\mathbb{N}^*}$ converges. Hence, we get the asymptotic 
    \begin{equation}\label{def:betalbd}
        \beta_{n,\lambda}\underset{n\to\infty}{\sim}\frac{\beta(\lambda)}{n},\qquad\beta(\lambda)\triangleq\lim_{n\to\infty}c_{n,\lambda}=\Phi^{-1}\Big(1-2\lambda^2I_0(\lambda)K_0(\lambda)I_1(\lambda)K_1(\lambda)\Big)>0.
    \end{equation}
    $\blacktriangleright$ \textbf{Properties of the function $\beta$ :} We can write $\beta=\Phi^{-1}\circ f$, where $f$ has been introduced in \eqref{deff}. By monotonicity of the function $\Phi$, \eqref{limitsPhi},  \eqref{decayf} and \eqref{limitsf}, the function $\beta:(0,\infty)\to(0,\infty)$ is strictly decreasing. Now, let us consider
    $$\varpi(x)\triangleq1-\Phi(x)=\frac{2x-1+e^{-2x}}{2x^2}\cdot$$
    Note that according to \eqref{def:betalbd},
    $$\varpi\big(\beta(\lambda)\big)=1-\Phi\big(\beta(\lambda)\big)=2 \lambda^2 I_0(\lambda) K_0(\lambda) I_1(\lambda) K_1(\lambda).$$
    Since $\Phi$ is strictly increasing, then $\varpi$ is strictly decreasing.  Furthermore, we notice that 
    \begin{align*}
        \varpi\left(\frac{1}{2x}\right) &= \frac{\frac{1}{x}-1+e^{-\frac{1}{x}}}{\frac{1}{2x^2}} = 2x^2 \left(\frac{1}{x}-1+e^{-\frac{1}{x}}\right) = 2x(1 - x) + 2x^2 e^{-\frac{1}{x}} > 2x(1-x).
    \end{align*}
    Recall from \eqref{wronskian}, the Wronskian identity
\begin{equation}\label{wron}
    \lambda I_0(\lambda)K_1(\lambda)+\lambda I_1(\lambda)K_0(\lambda)=1.
\end{equation}
    Therefore, thanks to \eqref{wron}, we get
    \begin{align*}
        \varpi\left(\frac{1}{2\lambda I_0(\lambda) K_1(\lambda)}\right) &> 2\lambda I_0(\lambda) K_1(\lambda)\big(1 - \lambda I_0(\lambda) K_1(\lambda)\big) \\
        & = 2 \lambda^2 I_0(\lambda) K_0(\lambda) I_1(\lambda) K_1(\lambda)\\
        &=\varpi\big(\beta(\lambda)\big).
    \end{align*}
    Since $\varpi$ is strictly decreasing, we have obtained \eqref{bndbeta}.\\
    $\blacktriangleright$ \textbf{Next order asymptotic :} From what precedes, we can write
    $$\mathcal{D}_n''(1)\underset{n\to\infty}{=}\Phi\big(\beta(\lambda)\big)+O\left(\frac{1}{n}\right)$$
    and
    $$\beta_{n,\lambda}\int_{0}^{1}(1-t)^2\mathcal{D}_{n,\lambda}'''(1-t\beta_{n,\lambda})\,dt\underset{n\to\infty}{=}\Phi(c_{n,\lambda})+O\left(\frac{1}{n}\right).$$
    We deduce that
    $$\Phi(c_{n,\lambda})-\Phi\big(\beta(\lambda)\big)\underset{n\to\infty}{=}O\left(\frac{1}{n}\right).$$
    By definition, for $N(\lambda)\in\mathbb{N}^*$ large enough, there exists $M>0$ such that, for any $n\in\mathbb{N}^*$, $n\geqslant N(\lambda)$,
    $$\left|\Phi(c_{n,\lambda})-\Phi\big(\beta(\lambda)\big)\right|\leqslant \frac{M}{n}\cdot$$
    Applying the mean value theorem, there exists $d_{n,\lambda}\in\big(c_{n,\lambda},\beta(\lambda)\big)$ such that
    $$\Phi'(d_{n,\lambda})=\frac{\Phi(c_{n,\lambda})-\Phi\big(\beta(\lambda)\big)}{c_{n,\lambda}-\beta(\lambda)}\cdot$$
    Since $\Phi'$ is strictly positive on $(0,\infty)$, we have in particular $\Phi'\big(\beta(\lambda)\big)\neq0.$ According to \eqref{def:betalbd}, we have $\displaystyle\lim_{n\to\infty}d_{n,\lambda}=\beta(\lambda).$ So, by continuity of $\Phi'$ at $\beta(\lambda)$, there exists $\mathtt{m}_0>0$ such that, for any $n\geqslant N(\lambda)$ with $N(\lambda)$ large enough, we get
    $$|\Phi'(d_{n,\lambda})|\geqslant \mathtt{m}_0>0.$$ We deduce that
    $$\left|c_{n,\lambda}-\beta(\lambda)\right|=\left|\frac{\Phi(c_{n,\lambda})-\Phi\big(\beta(\lambda)\big)}{\Phi'(d_{n,\lambda})}\right|\leqslant\frac{M}{\mathtt{m}_0n}\cdot$$
    This means
    $$c_{n,\lambda}\underset{n\to\infty}{=}\beta(\lambda)+O\left(\frac{1}{n}\right),\qquad\textnormal{i.e.}\qquad\beta_{n,\lambda}\underset{n\to\infty}{=}\frac{\beta(\lambda)}{n}+O\left(\frac{1}{n^2}\right).$$
    This concludes the proof of Lemma \ref{lem:asybn}.
\end{proof}

We shall now prove the uniqueness of the zero for $n$ large enough.
\begin{lem}\label{lem:asyan}
    There exists $\alpha(\lambda)>0$ such that the following asymptotic holds
    \begin{equation}\label{asy:Dnprimebn}
        \mathcal{D}_{n,\lambda}'(b_{n,\lambda})\underset{n\to\infty}{=}\frac{\alpha(\lambda)}{n}+O\left(\frac{1}{n^2}\right).
    \end{equation}
    In particular, asymptotically the set $Z_{n,\lambda}$ is a singleton, namely there is a unique zero
    $$Z_{n,\lambda}=\big\{b_{n,\lambda}\big\}.$$
    In addition, this zero is simple. Moreover, the sequence $(b_{n,\lambda})_{n}$ is asymptotically strictly increasing and converges to $1.$
\end{lem}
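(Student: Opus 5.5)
The plan is to compute $\mathcal{D}_{n,\lambda}'(b_{n,\lambda})$ by a Taylor expansion around $b=1$, just as in the proof of Lemma \ref{lem:asybn}. Since $\mathcal{D}_{n,\lambda}'(1)=0$ by \eqref{Dnprime01}, we have
\begin{equation*}
\mathcal{D}_{n,\lambda}'(1-\beta_{n,\lambda})=-\beta_{n,\lambda}\mathcal{D}_{n,\lambda}''(1)+\beta_{n,\lambda}^2\int_0^1(1-t)\,\mathcal{D}_{n,\lambda}'''(1-t\beta_{n,\lambda})\,dt .
\end{equation*}
For the first term, I will write $\mathcal{D}_{n,\lambda}''(1)=\Phi(\beta(\lambda))+O(\frac1n)$; this is a quantitative version of \eqref{limit:Dn2} and was already used in the last step of Lemma \ref{lem:asybn}. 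For the second term, I will use \eqref{asymp:Dn3}, assuming its remainder is uniform for $b$ in a left neighbourhood of $1$, as was implicitly used before. Writing $c_{n,\lambda}=n\beta_{n,\lambda}=\beta(\lambda)+O(\frac1n)$, this gives
\begin{equation*}
\mathcal{D}_{n,\lambda}'(b_{n,\lambda})=\frac{1}{n}\Big[-c_{n,\lambda}\Phi(c_{n,\lambda})+2c_{n,\lambda}^2\int_0^1(1-t)\Big(1-\tfrac{tc_{n,\lambda}}{n}\Big)^{2n-2}dt\Big]+O\Big(\frac1{n^2}\Big).
\end{equation*}
Replacing $(1-\frac{tc}{n})^{2n-2}$ by $e^{-2ct}$ costs $O(\frac1n)$ uniformly for $c$ bounded. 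Replacing $c_{n,\lambda}$ by $\beta(\lambda)$ costs $O(\frac1n)$ by Lipschitz continuity, so the bracket equals $\Psi(\beta(\lambda))+O(\frac1n)$.

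It remains to compute $\Psi$ explicitly. We have $\int_0^1(1-t)e^{-2ct}dt=\frac{2c-1+e^{-2c}}{4c^2}$, and $\Phi$ is given by \eqref{def:Phi}. Putting these together,
\begin{equation*}
\Psi(c)=\frac{2c-1+e^{-2c}}{2}-\frac{2c^2-2c+1-e^{-2c}}{2c}=\frac{(c-1)+(c+1)e^{-2c}}{2c}=\frac{g(c)}{2c},
\end{equation*}
where $g$ is exactly the function introduced in the proof of Lemma \ref{lem:asybn}, which was shown to be strictly positive on $(0,\infty)$. Hence $\alpha(\lambda)\triangleq\frac{g(\beta(\lambda))}{2\beta(\lambda)}>0$, which yields \eqref{asy:Dnprimebn}.

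Uniqueness and simplicity then follow by a topological argument. Every element of $Z_{n,\lambda}$ satisfies the asymptotic above, with constants that do not depend on the chosen zero: the bounds in Lemma \ref{lem:asybn} only use $b_{n,\lambda}\in Z_{n,\lambda}$ and Lemma \ref{lem:accu1}. So for $n\geqslant N(\lambda)$, every zero is a strict sign change of $\mathcal{D}_{n,\lambda}$ from negative to positive. Recall that $Z_{n,\lambda}$ is discrete and, by Lemma \ref{lem:accu1}, contained in some $(1-\varepsilon,1)$ on which $\mathcal{D}_{n,\lambda}$ has no zero accumulating at $1$ (because $\mathcal{D}_{n,\lambda}''(1)>0$). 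Two consecutive zeros of this type would force an intermediate zero where $\mathcal{D}_{n,\lambda}'\leqslant0$, a contradiction. Hence $Z_{n,\lambda}$ is a singleton and its element is simple since $\mathcal{D}_{n,\lambda}'(b_{n,\lambda})\neq0$. Convergence to $1$ is Lemma \ref{lem:accu1}.

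The main obstacle is the strict monotonicity of $(b_{n,\lambda})_n$. From \eqref{asy:bn} we get $b_{n+1,\lambda}-b_{n,\lambda}=\frac{\beta(\lambda)}{n(n+1)}+O(\frac1{n^2})$, and the error is of the same order as the main term, so \eqref{asy:bn} alone does not suffice. My plan is to push the expansion one order further and prove $c_{n,\lambda}=\beta(\lambda)+\frac{\gamma(\lambda)}{n}+o(\frac1n)$. For this I will expand $\mathcal{D}_{n,\lambda}''(1)$ to order $\frac1n$ using \eqref{decomp:Dn}, \eqref{decay:InKnHR} and the next term in \eqref{svi:diffkLbd}. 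I will also expand $\mathcal{D}_{n,\lambda}'''$ to order $b^{2n-2}$ with an explicit constant, together with $(1-\frac{tc}{n})^{2n-2}=e^{-2ct}(1+\frac{2ct-c^2t^2}{n}+O(\frac{1}{n^2}))$. Inverting via $\Phi'(\beta(\lambda))>0$ then gives $\beta_{n,\lambda}=\frac{\beta}{n}+\frac{\gamma}{n^2}+o(\frac1{n^2})$, hence $b_{n+1,\lambda}-b_{n,\lambda}=\frac{\beta(\lambda)}{n^2}+o(\frac1{n^2})>0$ for $n$ large. If the second-order constants prove too heavy to track, the fallback is to show that $n\mapsto c_{n,\lambda}$ is Lipschitz in a continuous extension $n\in[N,\infty)$ with derivative $O(\frac1{n^2})$, using the implicit function theorem on $\mathcal{D}_{\nu,\lambda}(b)=0$ with real order $\nu$ and the nondegeneracy $\mathcal{D}'\sim\frac{\alpha}{\nu}$ established above.
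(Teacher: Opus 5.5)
Your derivation of \eqref{asy:Dnprimebn} is correct and takes a different, shorter route than the paper. The paper differentiates the decomposition \eqref{decomp:Dn} term by term at $b_{n,\lambda}$. It then needs a separate, lengthy argument (the functions $h$ and $Q_x$) to get the sign of the constant \eqref{def:alphalbd}.

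You instead Taylor-expand $\mathcal{D}_{n,\lambda}'$ at $b=1$ and reuse exactly the two inputs of Lemma \ref{lem:asybn}:
\begin{itemize}
\item $\mathcal{D}_{n,\lambda}''(1)=\Phi(\beta(\lambda))+O(\frac1n)$;
\item \eqref{asymp:Dn3} on $[1-\frac Cn,1]$, which is legitimate because there its remainder is simply $O(1)$.
\end{itemize}
Your constant $\alpha(\lambda)=\frac{g(\beta)}{2\beta}$ equals $\frac{\beta^2}{2}\Phi'(\beta)$, so it is positive with no further work, since $\Phi'>0$ was already proved.

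Be aware that your value differs from \eqref{def:alphalbd}, and it is yours that is right. When differentiating the term $-P_n(\lambda)\big[\Lambda_1(\lambda,b)-b\Omega_n(\lambda b)\big]$ of \eqref{decomp:Dn}, the paper flips its sign. With the correct sign, the contribution is $-\frac{\lambda I_1(\lambda)K_0(\lambda)}{2n}$. It combines with $-\frac{\lambda I_0(\lambda)K_1(\lambda)}{2n}$ through \eqref{wron} into $-\frac1{2n}$. This gives $\alpha(\lambda)=\frac12\big(4\lambda^2\beta P_0(\lambda)P_1(\lambda)-1+e^{-2\beta}\big)$, which equals $\frac{g(\beta)}{2\beta}$ after using $2\lambda^2P_0(\lambda)P_1(\lambda)=1-\Phi(\beta)$.

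You can check this independently from $\mathcal{D}_{n,\lambda}'=A'B+AB'+\partial_b(b\Lambda_n^2)$, with $A=\Omega_n(\lambda)-b\Lambda_1$ and $B=\Lambda_1-b\Omega_n(\lambda b)$. At $b_{n,\lambda}$ one has $nA\to\lambda\beta I_0K_1-\frac12$, $nB\to\frac12-\lambda\beta I_1K_0$, $A'\to-\lambda I_0K_1$, $B'\to\lambda I_1K_0$ and $n\,\partial_b(b\Lambda_n^2)\to\frac{e^{-2\beta}}{2}$. The paper's formula exceeds the true coefficient by $\lambda I_1(\lambda)K_0(\lambda)>0$. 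So its positivity proof does not cover the true coefficient, while yours does.

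Your uniqueness and simplicity argument is the paper's. Your remark that the asymptotics hold uniformly over all of $Z_{n,\lambda}$ makes explicit what the paper leaves implicit.

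The one genuine gap is strict monotonicity, which you correctly flag. From \eqref{asy:bn} one only gets $\beta_{n,\lambda}-\beta_{n+1,\lambda}=\frac{\beta(\lambda)}{n(n+1)}+O(n^{-2})$, so the paper's one-line deduction is not a proof either. But your second-order expansion is only announced, not carried out, so this part of the lemma is still unproved in your proposal.

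A clean way to carry out your plan is to rescale. At $b=1-\frac cn$, every ingredient expands in powers of $\frac1n$ with coefficients smooth in $c$: $nP_n$, $n\Lambda_n$, $(1-\frac cn)^n$, and the $n$-independent parts of $A$ and $B$. Locally uniformly in $c$, this gives
\[
n^2\mathcal{D}_{n,\lambda}\big(1-\tfrac cn\big)=\tfrac{c^2}{2}\big(\Phi(\beta(\lambda))-\Phi(c)\big)+\tfrac1n H_1(c)+o\big(\tfrac1n\big).
\]
Since $n\mathcal{D}_{n,\lambda}'(1-\frac cn)=-\partial_c\big[n^2\mathcal{D}_{n,\lambda}(1-\frac cn)\big]$, the leading term also recovers your $\alpha=\frac{\beta^2}{2}\Phi'(\beta)$. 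The leading term has a simple zero at $c=\beta(\lambda)$, and $c_{n,\lambda}-\beta(\lambda)=O(\frac1n)$. Expanding around $\beta(\lambda)$ then gives $c_{n,\lambda}=\beta(\lambda)+\frac{\gamma(\lambda)}{n}+o(\frac1n)$ with $\gamma=\frac{2H_1(\beta)}{\beta^2\Phi'(\beta)}$. Hence $\beta_{n,\lambda}-\beta_{n+1,\lambda}=\frac{\beta(\lambda)}{n^2}+o(n^{-2})>0$.

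Finally, Proposition \ref{prop:CRb} does not actually need strict monotonicity. Non-resonance only requires $b_{k\mathbf{m},\lambda}\neq b_{\mathbf{m},\lambda}$ for $k\geqslant2$. This already follows from \eqref{asy:bn}, since $\frac{1-b_{k\mathbf{m},\lambda}}{1-b_{\mathbf{m},\lambda}}=\frac{c_{k\mathbf{m},\lambda}}{k\,c_{\mathbf{m},\lambda}}\leqslant\frac12+o(1)$ uniformly in $k\geqslant2$.
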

\begin{rem}
    As we shall see in the proof, the number $\alpha(\lambda)$ is explicitly given by the formula \eqref{def:alphalbd}.
\end{rem}
\begin{proof}
$\blacktriangleright$ \textbf{Asymptotic expansion :} Differentiating \eqref{decomp:Dn} with respect to $b$, we get
\begin{align*}
    \mathcal{D}_{n,\lambda}'(b)&=\mathcal{D}_{\infty,\lambda}'(b)+P_n(\lambda)\partial_b\big[\Lambda_1(\lambda,b)-b\Omega_n(\lambda b)\big]+P_n(\lambda b)\big[\Omega_n(\lambda)-b\Lambda_1(\lambda,b)\big]+\lambda bP_n'(\lambda b)\big[\Omega_n(\lambda)-b\Lambda_1(\lambda,b)\big]\\
    &\quad-bP_n(\lambda b)\partial_b\big[b\Lambda_1(\lambda,b)\big]+\partial_b\big[b\Lambda_n^2(\lambda,b)\big].
\end{align*}
First, from \eqref{asy:bn} and \eqref{diff2Dinfty}, we have
$$\mathcal{D}_{\infty,\lambda}'(b_{n,\lambda})\underset{n\to\infty}{=}\mathcal{D}_{\infty,\lambda}'(1)-\frac{\beta(\lambda)}{n}\mathcal{D}_{\infty,\lambda}''(1)+O\left(\frac{1}{n^2}\right)\underset{n\to\infty}{=}\frac{2\lambda^2\beta(\lambda)I_0(\lambda)K_0(\lambda)I_1(\lambda)K_1(\lambda)}{n}+O\left(\frac{1}{n^2}\right).$$
Now, recall the asymptotic
\begin{equation}\label{expand:Pn}
    P_n(x)\underset{n\to\infty}{=}\frac{1}{2n}-\frac{x^2}{4n^3}+O\left(\frac{1}{n^5}\right).
\end{equation}
Differentiating this asymptotic, we get, on any compact subset of $[0,\infty)$,
\begin{equation}\label{expand:Pnprima}
    P_n'(x)\underset{n\to\infty}{=}O\left(\frac{1}{n^3}\right).
\end{equation}
Then, using the asymptotic expansions \eqref{asy:bn} and \eqref{expand:Pn}, it results
\begin{align*}
    P_n(\lambda)\partial_b\big[\Lambda_1(\lambda,b)-b\Omega_n(\lambda b)\big]|_{b=b_{n,\lambda}}\underset{n\to\infty}{=}\frac{\lambda I_1(\lambda)K_0(\lambda)}{2n}+O\left(\frac{1}{n^2}\right).
\end{align*}
Besides, still using \eqref{asy:bn} and \eqref{expand:Pn}, we observe cancellation of the first order asymptotic and obtain
$$P_n(\lambda b_{n,\lambda})\big[\Omega_n(\lambda)-b_{n,\lambda}\Lambda_1(\lambda,b_{n,\lambda})\big]\underset{n\to\infty}{=}O\left(\frac{1}{n^2}\right).$$
Now, by virtue of \eqref{expand:Pnprima} and \eqref{asy:bn}, we infer
$$\lambda b_{n,\lambda}P_n'(\lambda b_{n,\lambda})[\Omega_{n}(\lambda)-b_{n,\lambda}\Lambda_{1}(\lambda,b_{n,\lambda})]\underset{n \to \infty}{=}O\left(\frac{1}{n^2}\right).$$
Next, we have, by \eqref{expand:Pn} and \eqref{asy:bn},
\begin{align*}
    b_{n,\lambda}P_n(\lambda b_{n,\lambda})\partial_b\big[b\Lambda_1(\lambda,b)\big]|_{b=b_{n,\lambda}}&\,\,\,=b_{n,\lambda}P_n(\lambda b_{n,\lambda})\lambda I_0(\lambda b_{n,\lambda})K_1(\lambda)\underset{n\to\infty}{=}\frac{\lambda I_0(\lambda)K_1(\lambda)}{2n}+O\left(\frac{1}{n^2}\right).
\end{align*}
According to \eqref{svi:diffbL} and \eqref{asy:bn}, we infer
$$\partial_b\big[b\Lambda_n^2(\lambda,b)\big]|_{b=b_{n,\lambda}}\underset{n\to\infty}{=}\frac{e^{-2\beta(\lambda)}}{2n}+O\left(\frac{1}{n^2}\right).$$
Combining the foregoing calculation yields
$$\mathcal{D}_{n,\lambda}'(b_{n,\lambda})\underset{n\to\infty}{=}\frac{\alpha(\lambda)}{n}+O\left(\frac{1}{n^2}\right),$$
with
$$\alpha(\lambda)\triangleq\frac{1}{2}\Big(4\lambda^2\beta(\lambda)I_0(\lambda)K_0(\lambda)I_1(\lambda)K_1(\lambda)+\lambda\big(I_1(\lambda)K_0(\lambda)-I_0(\lambda)K_1(\lambda)\big)+e^{-2\beta(\lambda)}\Big).$$
The $\alpha(\lambda)$ can be written
$$\alpha(\lambda)=\frac{1}{2}\Big(4\lambda^2\beta(\lambda)P_0(\lambda)P_1(\lambda)+\lambda P'_0(\lambda)+e^{-2\beta(\lambda)}\Big).$$
$\blacktriangleright$ \textbf{Positivity of $\alpha(\lambda)$ :} Combining \eqref{wron} with the definition \eqref{def:betalbd} and the relation \eqref{diffP0}, we infer
\begin{align*}
    2\Phi\big(\beta(\lambda)\big)-1&=1-4\lambda^2I_0(\lambda)K_0(\lambda)I_1(\lambda)K_1(\lambda)\\
    &=1-4\lambda I_1(\lambda)K_0(\lambda)\big(1-\lambda I_1(\lambda)K_0(\lambda)\big)\\
    &=\big(2\lambda I_1(\lambda)K_0(\lambda)-1\big)^2\\
    &=\lambda^2\big(I_1(\lambda)K_0(\lambda)-I_0(\lambda)K_1(\lambda)\big)^2\\
    &=\big(\lambda P_0'(\lambda)\big)^2.
\end{align*}
Since $P_0$ is strictly decreasing, then $P_0'<0$ and therefore
$$\lambda P_0'(\lambda)=-\sqrt{2\Phi\big(\beta(\lambda)\big)-1}.$$
Hence, still using the definition \eqref{def:betalbd}, we can write $\alpha(\lambda)$ as a closed formula in $\beta(\lambda)$ through
\begin{equation}\label{def:alphalbd}
    \alpha(\lambda)=\frac{1}{2}\Psi\big(\beta(\lambda)\big),\qquad\Psi(x)\triangleq2x\big(1-\Phi(x)\big)-\sqrt{2\Phi(x)-1}+e^{-2x}.
\end{equation}
Coming back to the definition of $\Phi$ in \eqref{def:Phi}, we have
\begin{align*}
    2x\big(1-\Phi(x)\big)=\frac{2x-1+e^{-2x}}{x}
\end{align*}
and 
\begin{align*}
    2\Phi(x)-1=\frac{x^2-2x+1-e^{-2x}}{x^2}=\frac{(x-1)^2-e^{-2x}}{x^2}\cdot
\end{align*}
Therefore,
$$x\Psi(x)=2x-1+(1+x)e^{-2x}-\sqrt{(x-1)^2-e^{-2x}}.$$
We have the equivalence
\begin{align}\label{sgn-cond-Psi}
    \Psi(x)>0\quad\Leftrightarrow\quad\big(2x-1+(1+x)e^{-2x}\big)^2>(x-1)^2-e^{-2x}.
\end{align}
Let us consider the following function
\begin{align*}
    h:\mathbb{R}\to\mathbb{R},\qquad h(x)\triangleq\big(2x-1+(1+x)e^{-2x}\big)^2+e^{-2x}-(x-1)^2.
\end{align*}
Notice that
$$h(x) = (x+1)^2 e^{-4x} + (4x^2 + 2x - 1) e^{-2x} + 3x^2-2x,$$
meaning that the expression of $h$ is quadratic in $e^{-2x}$. Hence, for any fixed $x > 0$, we define the function
$$Q_x : \mathbb{R} \to \mathbb{R}, \qquad Q_x(t) \triangleq (x+1)^2 t^2 + (4x^2+2x-1)t + 3x^2-2x.$$
As long as $(x+1)^2 > 0$, this is a parabola that is increasing for $t \geqslant t_*(x)$, with
$$t_*(x)\triangleq - \frac{4x^2+2x-1}{2(x+1)^2}\cdot$$
One has, for every $x>0$, 
$$e^{-2x}>\frac{1-x}{x+1}>t_*(x).$$
Indeed, on one side,
\begin{align*}
    \frac{1-x}{x+1} > t_*(x) & \iff \frac{1-x}{x+1} + \frac{4x^2+2x-1}{2(x+1)^2}> 0 \iff \frac{2(1-x)(1+x)+4x^2+2x-1}{2(x+1)^2} > 0 \\
    & \iff \frac{2-2x^2+4x^2+2x-1}{2(x+1)^2} > 0 \iff \frac{2x^2+2x+1}{2(x+1)^2} >0 
\end{align*}
and the last inequality is clearly true. On the other side, since for $x \geqslant 1$ it results
$$\frac{1-x}{x+1} \leqslant 0,$$
it is sufficient to show that, if $x \in (0, 1)$, then
$$e^{-2x}>\frac{1-x}{x+1}\cdot$$
One has, being both side positive,
\begin{align*}
    e^{-2x} > \frac{1-x}{x+1} & \iff \ln\left(e^{-2x}\right) > \ln\left(\frac{1-x}{x+1}\right) \iff -2x > \ln\left(\frac{1-x}{x+1}\right) \\
    & \iff \ln\left(\frac{1+x}{1-x}\right) - 2x > 0 \iff \ln(1+x) - \ln(1-x) -2x > 0.
\end{align*}
Let
$$k:[0,1)\to\mathbb{R},\qquad k(x)\triangleq\ln(1+x)-\ln(1-x)-2x.$$
It holds $k(0)=0$. Moreover,
$$k'(x)=\frac{1}{1+x}+\frac{1}{1-x}-2=\frac{2}{1-x^2}-2=\frac{2x^2}{1-x^2}>0.$$
Therefore, for every $x\in(0,1)$, the function $k$ is strictly increasing and $0=k(0)<k(x)$. This proves the inequality. Using the monotonicity of $Q_x$, we finally get, for every $x>0$,
\begin{align*}
    h(x)=Q_x\left(e^{-2x}\right)>Q_x\left(\frac{1-x}{x+1}\right)=\frac{2x^2}{1+x}>0.
\end{align*}
Thus, the condition \eqref{sgn-cond-Psi} is true for any $x>0.$ This allows to conclude that $\alpha(\lambda)>0.$\\
$\blacktriangleright$ \textbf{Uniqueness :} The expansion \eqref{asy:Dnprimebn} shows that asymptotically 
\begin{equation}\label{pos:Dnp}
    \mathcal{D}_{n,\lambda}'(b_{n,\lambda})>0.
\end{equation} Since this is true for any zero $b_{n,\lambda}$, this implies the asymptotic uniqueness. Indeed, take $n$ sufficiently large and assume in view of a contradiction that 
$$|Z_{n,\lambda}|\geqslant2.$$
Let us recall from Lemma \ref{lem:accu1} that the set $Z_{n,\lambda}$ is discrete and denote
$$b_{n,\lambda}^{[1]}\triangleq\min(Z_{n,\lambda})\qquad\textnormal{and}\qquad b_{n,\lambda}^{[2]}\triangleq\min\left(Z_{n,\lambda}\setminus\left\{b_{n,\lambda}^{[1]}\right\}\right).$$
By what precedes, we have 
\begin{equation}\label{sgnDpNb12}
    \mathcal{D}_{n,\lambda}'\left(b_{n,\lambda}^{[1]}\right)>0\qquad\textnormal{and}\qquad\mathcal{D}_{n,\lambda}'\left(b_{n,\lambda}^{[2]}\right)>0.
\end{equation}
By Taylor formula, the first condition in \eqref{sgnDpNb12} implies that $\mathcal{D}_{n,\lambda}>0$ locally near $b_{n,\lambda}^{[1]}$ inside $\left(b_{n,\lambda}^{[1]},b_{n,\lambda}^{[2]}\right).$ Since $b_{n,\lambda}^{[1]}$ and $b_{n,\lambda}^{[2]}$ are two consecutive zeros  and the function $\mathcal{D}_{n,\lambda}$ is continuous, by intermediate value theorem, we have
$$\forall\, b\in\left(b_{n,\lambda}^{[1]},b_{n,\lambda}^{[2]}\right),\quad\mathcal{D}_{n,\lambda}(b)>0.$$
From the mean value theorem, for any $b\in\left(b_{n,\lambda}^{[1]},b_{n,\lambda}^{[2]}\right)$ there exists $c\in\left(b,b_{n,\lambda}^{[2]}\right)$ such that
$$\mathcal{D}_{n,\lambda}'(c)=\frac{\mathcal{D}_{n,\lambda}(b)-\mathcal{D}_{n,\lambda}\left(b_{n,\lambda}^{[2]}\right)}{b-b_{n,\lambda}^{[2]}}<0.$$
Taking the limit $b\to b_{n,\lambda}^{[2]}$ in the previous expression, yields by continuity of $\mathcal{D}_{n,\lambda}'$,
$$\mathcal{D}_{n,\lambda}'\left(b_{n,\lambda}^{[2]}\right)\leqslant0.$$
This is a contradiction with the second condition in \eqref{sgnDpNb12}. Thus, $\left|Z_{n,\lambda}\right|=1.$\\
$\blacktriangleright$ \textbf{Last properties :} The simplicity of the zero is a consequence of \eqref{pos:Dnp}. Finally, the asymptotic \eqref{asy:bn} implies the desired monotonicity property. This ends the proof of Lemma \ref{lem:asyan}.
\end{proof}

With Proposition \ref{prop seq b} in hand, we can now state the bifurcation hypothesis allowing to prove Theorem \ref{thm stationary Vstates QGSW}-$(i)$ after application of the Crandall-Rabinowitz Theorem \ref{Crandall-Rabinowitz theorem}.

\begin{prop}\label{prop:CRb}
	Let $\lambda>0$, $\alpha\in(0,1)$ and $\mathbf{m}\in\mathbb{N}^*$ such that $\mathbf{m}\geqslant N(\lambda)$, with $N(\lambda)$ defined as in Proposition \ref{prop seq b}. Then the following assertions hold true.
	\begin{enumerate}[label=(\roman*)]
		\item There exists $r>0$ such that $G_{\lambda}:(0,1)\times B_{r,\mathbf{m}}^{1+\alpha}\times B_{r,\mathbf{m}}^{1+\alpha}\rightarrow Y_{\mathbf{m}}^{\alpha}$ is well-defined and of class $C^1.$
		\item The kernel $\ker\Big(DG_\lambda(b_{\mathbf{m},\lambda},0,0)\Big)$ is one-dimensional and generated by
		\begin{equation}\label{genker:b}
		    v_{\mathbf{m},\lambda}:\begin{array}[t]{rcl}
			\mathbb{T} & \rightarrow & \mathbb{C}^2\\
			w & \mapsto & \begin{pmatrix}
        -b_{\mathbf{m},\lambda}\Lambda_{\mathbf{m}}(\lambda,b_{\mathbf{m},\lambda})\\
			\Omega_{\mathbf{m}}(\lambda)-b_{\mathbf{m},\lambda}\Lambda_{1}(\lambda,b_{\mathbf{m},\lambda})
		\end{pmatrix}\overline{w}^{\mathbf{m}-1}.
		\end{array}
		\end{equation}
		\item The range $R\Big(DG_\lambda(b_{\mathbf{m},\lambda},0,0)\Big)$ is closed and of codimension one in $Y_{\mathbf{m}}^{\alpha}.$ It is given by the following orthogonality condition with respect to the scalar product in \eqref{scalar product},
		\begin{equation}\label{genim:b}
		    R\Big(DG_\lambda(b_{\mathbf{m},\lambda},0,0)\Big)=\mathtt{span}( y_{\mathbf{m},\lambda})^{\perp},\qquad y_{\mathbf{m},\lambda}\triangleq\begin{pmatrix}
        \Lambda_{\mathbf{m}}(\lambda,b_{\mathbf{m},\lambda})\\
			\Omega_{\mathbf{m}}(\lambda)-b_{\mathbf{m},\lambda}\Lambda_{1}(\lambda,b_{\mathbf{m},\lambda})
		\end{pmatrix}e_{\mathbf{m}}.
		\end{equation}
		\item Transversality condition :
		\begin{equation}\label{tras:b}
        \partial_{b}DG_\lambda(b_{\mathbf{m},\lambda},0,0)[v_{\mathbf{m},\lambda}]\not\in R\Big(DG_\lambda(b_{\mathbf{m},\lambda},0,0)\Big).
		\end{equation}
	\end{enumerate}
\end{prop}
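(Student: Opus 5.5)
Items (i)–(iii) are essentially bookkeeping from earlier results; the real work is (iv).

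\textbf{Item (i).} This is Proposition \ref{reg G and lin op}(1) with $\lambda$ frozen.

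\textbf{Item (ii).} By Proposition \ref{reg G and lin op}(2), $(h_1,h_2)$ lies in the kernel iff $M_{n\mathbf{m}}(\lambda,b_{\mathbf{m},\lambda})(h_{1,n},h_{2,n})^T=0$ for every $n\geqslant1$.
\begin{itemize}
\item For $n=1$: $\mathcal{D}_{\mathbf{m}}(\lambda,b_{\mathbf{m},\lambda})=0$ by Proposition \ref{prop seq b}. The first row of $M_{\mathbf{m}}$ is $(\Omega_{\mathbf{m}}(\lambda)-b\Lambda_1,\ b\Lambda_{\mathbf{m}})$, which is nonzero because $\Lambda_{\mathbf{m}}>0$. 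So the matrix has rank one, with kernel spanned by $(-b\Lambda_{\mathbf{m}},\ \Omega_{\mathbf{m}}(\lambda)-b\Lambda_1)$. This gives \eqref{genker:b}.
\item For $n\geqslant2$: we need $\mathcal{D}_{n\mathbf{m}}(\lambda,b_{\mathbf{m},\lambda})\neq0$. Since $n\mathbf{m}\geqslant N(\lambda)$, Proposition \ref{prop seq b} says the unique zero of $\mathcal{D}_{n\mathbf{m},\lambda}$ in $(0,1)$ is $b_{n\mathbf{m},\lambda}$. Strict monotonicity of the sequence gives $b_{n\mathbf{m},\lambda}>b_{\mathbf{m},\lambda}$, so $\mathcal{D}_{n\mathbf{m},\lambda}(b_{\mathbf{m},\lambda})\neq0$ and those modes vanish.
\end{itemize}

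\textbf{Item (iii).} Proposition \ref{prop-fredholmness} gives index zero, so the range is closed of codimension one. Let $\phi=(\phi_1,\phi_2)=\sum_n(\phi_{1,n},\phi_{2,n})e_{n\mathbf{m}}$ be in the range. Its first mode $(\phi_{1,1},\phi_{2,1})$ must lie in the range of $M_{\mathbf{m}}$. That range is the orthogonal complement of $\ker M_{\mathbf{m}}^T$. Using the zero determinant, $\ker M_{\mathbf{m}}^T$ is spanned by $(\Lambda_{\mathbf{m}},\ \Omega_{\mathbf{m}}(\lambda)-b\Lambda_1)$: check this against the first column $(\Omega_{\mathbf{m}}-b\Lambda_1,\,-\Lambda_{\mathbf{m}})$ and the second column. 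So the range is contained in $\mathtt{span}(y_{\mathbf{m},\lambda})^\perp$. Both spaces have codimension one, so they are equal.

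\textbf{Item (iv).} Differentiate the Fourier representation in $b$. Transversality reduces to
$$\big\langle \partial_bM_{\mathbf{m}}(\lambda,b_{\mathbf{m},\lambda})\,\hat v,\ \hat y\big\rangle\neq0,$$
where $\hat v,\hat y$ are the coefficient vectors of $v_{\mathbf{m},\lambda}$ and $y_{\mathbf{m},\lambda}$. Write $a=\Omega_{\mathbf{m}}(\lambda)-b\Lambda_1$, $\ell=\Lambda_{\mathbf{m}}$, $d=\Lambda_1-b\Omega_{\mathbf{m}}(\lambda b)$, so $M_{\mathbf{m}}=\begin{pmatrix}a&b\ell\\-\ell&d\end{pmatrix}$. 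Expanding gives a quadratic expression in $a,\ell$ and their $b$-derivatives.

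The simplest route is the classical identity for a rank-one matrix with kernel vector $\hat v$ and cokernel vector $\hat y$: the quantity $\hat y^T\partial_bM\,\hat v$ is, up to the normalising factor, $\partial_b\mathcal{D}_{\mathbf{m}}$. Concretely, at a simple zero one has
$$\partial_b\det M=\mathrm{tr}(\mathrm{adj}(M)\,\partial_bM),$$
and $\mathrm{adj}(M)$ equals, up to a sign and a scalar, $\hat v\hat y^T$ (possibly with an extra scalar weight, since $\hat v$ and $\hat y$ differ by a factor $b$ and a sign in the first entry). I will first try to verify this identity precisely, keeping in mind that the pairing here is the plain Euclidean product. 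If it holds, the scalar product equals $c\,\mathcal{D}_{\mathbf{m},\lambda}'(b_{\mathbf{m},\lambda})$ for some factor $c$, and Lemma \ref{lem:asyan} gives $\mathcal{D}'=\alpha(\lambda)/\mathbf{m}+O(\mathbf{m}^{-2})\neq0$. The remaining task is to show $c\neq0$.

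\textbf{Main obstacle.} The difficulty is that $\mathrm{adj}(M)$ is $\begin{pmatrix}d&-b\ell\\ \ell&a\end{pmatrix}$, whereas $\hat v\hat y^T$ has entries built from $b\ell$, $a$, $\ell$. These two agree only after using $ad=-b\ell^2$, since $\mathcal{D}=0$. Both $a$ and $\ell$ are $O(1/\mathbf{m})$ (as noted in the introduction, the kernel generator tends to $0$). So the proportionality factor is a ratio such as $a/d$ or $\ell/d$. I must control it using the asymptotics of Lemma \ref{lem:asybn} and \eqref{svi:diffkLbd}: $\ell\sim e^{-\beta(\lambda)}/(2\mathbf{m})$, while $d$ is of order $1/\mathbf{m}$ with an explicit leading coefficient computed as in Lemma \ref{lem:asyan}. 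The lower bound \eqref{bndbeta} on $\beta(\lambda)$ should guarantee that the leading coefficients do not cancel. If the adjugate shortcut fails, the fallback is to expand the four-term scalar product directly to order $\mathbf{m}^{-3}$ and check the sign of the leading coefficient with the same tools.
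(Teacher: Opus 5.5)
Your items (i)--(iii) follow the paper's proof: Fourier diagonalization, uniqueness plus strict monotonicity of $(b_{n,\lambda})_n$ to exclude the modes $n\geqslant2$, the left kernel vector of $M_{\mathbf{m}}$, and the codimension count.

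For (iv) you take a genuinely different route. It works, and the ``main obstacle'' you anticipate does not exist. In your notation, $\hat v=(-b\ell,a)^{\top}$, $\hat y=(\ell,a)^{\top}$ and $\mathrm{adj}(M)=\begin{pmatrix} d&-b\ell\\ \ell&a\end{pmatrix}$. The relation $ad=-b\ell^2$, which is just $\mathcal{D}_{\mathbf{m},\lambda}(b_{\mathbf{m},\lambda})=0$, gives exactly
\[
\hat v\,\hat y^{\top}=\begin{pmatrix}-b\ell^2&-ab\ell\\ a\ell&a^2\end{pmatrix}=a\,\mathrm{adj}(M),
\qquad\text{hence}\qquad
T_{\mathbf{m},\lambda}=\mathbf{m}\,\mathrm{tr}\big(\hat v\hat y^{\top}\partial_bM_{\mathbf{m}}\big)=\mathbf{m}\,a\,\mathcal{D}_{\mathbf{m},\lambda}'(b_{\mathbf{m},\lambda}).
\]
The proportionality factor is $a$ itself, not a ratio needing asymptotic control. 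Moreover $a\neq0$ for a purely algebraic reason: $ad=-b\ell^2<0$ because $b>0$ and $\Lambda_{\mathbf{m}}>0$. So transversality is equivalent to the simplicity of $b_{\mathbf{m},\lambda}$, which Proposition \ref{prop seq b} already provides. You need neither \eqref{bndbeta} nor any expansion of $\partial_bM_{\mathbf{m}}$, $v_{\mathbf{m},\lambda}$ or $y_{\mathbf{m},\lambda}$.

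The paper instead expands these three objects at $b_{\mathbf{m},\lambda}$ using \eqref{asy:bn}. It obtains $T_{\mathbf{m},\lambda}=-\tau(\lambda)/(4\mathbf{m})+\dots$ and signs $\tau(\lambda)$ with the Wronskian and \eqref{bndbeta}. That route gives the size of $T_{\mathbf{m},\lambda}$, but it is longer and only asymptotic. Your identity is exact and applies verbatim to Proposition \ref{prop:CRlbd}.

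The identity also explains the structure of the paper's computation. At leading order $a\simeq(\lambda\beta I_0K_1-\tfrac12)/\mathbf{m}$, and its positivity is exactly \eqref{bndbeta}. Hence $-\tau(\lambda)/4=(\lambda\beta I_0K_1-\tfrac12)\,\alpha(\lambda)$, where $\alpha(\lambda)$ is the constant of Lemma \ref{lem:asyan}.

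Carrying out this consistency check shows that the correct value is $\alpha(\lambda)=\tfrac12\big(4\lambda^2\beta P_0P_1-1+e^{-2\beta}\big)=\tfrac12\beta^2\Phi'(\beta)$. The term $P_n(\lambda)\partial_b[\Lambda_1-b\Omega_n(\lambda b)]$ enters $\mathcal{D}_{n,\lambda}'$ with a minus sign. So $\lambda P_0'(\lambda)$ in the paper's displayed formula should read $-\lambda(I_1K_0+I_0K_1)=-1$. This value is still positive since $\Phi'>0$, so the simplicity your argument relies on is unaffected.
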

\begin{proof}
	$(i)$ Follows from Proposition \ref{reg G and lin op}-1.\\
	$(ii)$ Let $(h_{1},h_{2})\in X_{\mathbf{m}}^{1+\alpha}.$  We write
	\begin{equation}\label{sym h1 h2 b}
		h_{1}(w)=\sum_{n=1}^{\infty}h_{1,n}\overline{w}^{n\mathbf{m}-1}\quad\mbox{ and }\quad h_{2}(w)=\sum_{n=1}^{\infty}h_{2,n}\overline{w}^{n\mathbf{m}-1}.
	\end{equation}
	Proposition \ref{reg G and lin op}-2 gives 
	\begin{equation}\label{DGlbd}
		DG_\lambda(b,0,0)[h_{1},h_{2}]=\sum_{n=1}^{\infty}n\mathbf{m}M_{n\mathbf{m}}(\lambda,b)\begin{pmatrix}
			h_{1,n}\\
			h_{2,n}
		\end{pmatrix}e_{n\mathbf{m}}.
	\end{equation}
	For $b=b_{\mathbf{m},\lambda},$ we have 
		$$\det\Big(M_{\mathbf{m}}\big(\lambda,b_{\mathbf{m},\lambda}\big)\Big)=0.$$
	Thus, the kernel of $DG_\lambda\big(b_{\mathbf{m},\lambda},0,0\big)$ is non trivial and it is one dimensional if and only if 
	$$\forall n\in\mathbb{N}^{*},\quad n\geqslant 2\quad\Rightarrow\quad\det\Big(M_{n\mathbf{m}}\big(\lambda,b_{\mathbf{m},\lambda}\big)\Big)\neq 0.$$
	The previous condition is satisfied in view the simplicity and monotonicity properties in Proposition \ref{prop seq b}. Hence, we have the equivalence 
	$$(h_{1},h_{2})\in\ker\Big(DG_\lambda\big(b_{\mathbf{m},\lambda},0,0\big)\Big)\Leftrightarrow\left\lbrace\begin{array}{l}
		\forall n\in\mathbb{N}^{*},\quad n\geqslant 2\,\Rightarrow \,h_{1,n}=0=h_{2,n},\\
		\begin{pmatrix}
			h_{1,1}\\
			h_{2,1}
		\end{pmatrix}\in\ker\Big(M_{\mathbf{m}}\big(\lambda,b_{\mathbf{m},\lambda}\big)\Big).
	\end{array}\right.$$
	Therefore, we can select as generator of $\ker\Big(DG_\lambda\big(b_{\mathbf{m},\lambda},0,0\big)\Big)$ the following function $$v_{\mathbf{m},\lambda}:\begin{array}[t]{rcl}
		\mathbb{T} & \rightarrow & \mathbb{C}^2\\
		w & \mapsto & \begin{pmatrix}
        -b_{\mathbf{m},\lambda}\Lambda_{\mathbf{m}}(\lambda,b_{\mathbf{m},\lambda})\\
			\Omega_{\mathbf{m}}(\lambda)-b_{\mathbf{m},\lambda}\Lambda_{1}(\lambda,b_{\mathbf{m},\lambda})
		\end{pmatrix}\overline{w}^{\mathbf{m}-1}.
	\end{array}$$
$(iii)$ First remark that the point $(ii)$ together with Proposition \ref{prop-fredholmness} imply that $R\Big(DG_\lambda(b_{\mathbf{m},\lambda},0,0)\Big)$ is closed and of codimension one in $Y_{\mathbf{m}}^{\alpha}.$ Now consider $f\in R\Big(DG_\lambda(b_{\mathbf{m},\lambda},0,0)\Big).$ There exists $h$ as in \eqref{sym h1 h2 b} such that
$$f=DG_\lambda(b_{\mathbf{m},\lambda},0,0)[h]=\sum_{n=1}^{\infty}n\mathbf{m}M_{n\mathbf{m}}(\lambda,b_{\mathbf{m},\lambda})\begin{pmatrix}
	h_{1, n}\\
	h_{2, n}
\end{pmatrix}e_{n\mathbf{m}}.$$
Now taking the scalar product \eqref{scalar product} of $f$ with $y_{\mathbf{m},\lambda}$ yields
\begin{align*}
	\big\langle f,y_{\mathbf{m},\lambda}\big\rangle&=\left\langle M_{\mathbf{m}}(\lambda,b_{\mathbf{m},\lambda})\begin{pmatrix}
		h_{1,1}\\
		h_{2,1}
	\end{pmatrix},\begin{pmatrix}
        \Lambda_{\mathbf{m}}(\lambda,b_{\mathbf{m},\lambda})\\
			\Omega_{\mathbf{m}}(\lambda)-b_{\mathbf{m},\lambda}\Lambda_{1}(\lambda,b_{\mathbf{m},\lambda})
		\end{pmatrix}\right\rangle_{\mathbb{R}^2}\\
	&=\left\langle\begin{pmatrix}
		h_{1,1}\\
		h_{2,1}
	\end{pmatrix},M_{\mathbf{m}}^{\top}(\lambda,b_{\mathbf{m},\lambda})\begin{pmatrix}
        \Lambda_{\mathbf{m}}(\lambda,b_{\mathbf{m},\lambda})\\
			\Omega_{\mathbf{m}}(\lambda)-b_{\mathbf{m},\lambda}\Lambda_{1}(\lambda,b_{\mathbf{m},\lambda})
		\end{pmatrix}\right\rangle_{\mathbb{R}^2}\\
	&=0,
\end{align*}
because by construction 
$$\begin{pmatrix}
        \Lambda_{\mathbf{m}}(\lambda,b_{\mathbf{m},\lambda})\\
			\Omega_{\mathbf{m}}(\lambda)-b_{\mathbf{m},\lambda}\Lambda_{1}(\lambda,b_{\mathbf{m},\lambda})
		\end{pmatrix}\in\ker\big(M_{\mathbf{m}}^{\top}(\lambda,b_{\mathbf{m},\lambda})\big).$$
This proves 
$$R\Big(DG_\lambda(b_{\mathbf{m},\lambda},0,0)\Big)\subset\mathtt{span}\big( y_{\mathbf{m},\lambda}\big)^{\perp}.$$
The equality is obtained by the codimension condition, by virtue of \cite[Lem. B.1]{R25}.\\
	$(iv)$ From \eqref{DGlbd}, one has
	$$\partial_{b}DG_\lambda(b,0,0)[h_{1},h_{2}]=\sum_{n=1}^{\infty}n\mathbf{m}\partial_{b}M_{n\mathbf{m}}(\lambda,b)\begin{pmatrix}
		h_{1,n}\\
		h_{2,n}
	\end{pmatrix}e_{n\mathbf{m}}.$$
In view of \eqref{scalar product}, \eqref{genker:b} and \eqref{genim:b}, the transversality condition \eqref{tras:b} is equivalent to 
\begin{equation}\label{def:Tmlbd}
    T_{\mathbf{m},\lambda}\triangleq\mathbf{m}\left\langle\partial_{b}M_{\mathbf{m}}(\lambda,b_{\mathbf{m},\lambda})\begin{pmatrix}
        -b_{\mathbf{m},\lambda}\Lambda_{\mathbf{m}}(\lambda,b_{\mathbf{m},\lambda})\\
			\Omega_{\mathbf{m}}(\lambda)-b_{\mathbf{m},\lambda}\Lambda_{1}(\lambda,b_{\mathbf{m},\lambda})
		\end{pmatrix},\begin{pmatrix}
        \Lambda_{\mathbf{m}}(\lambda,b_{\mathbf{m},\lambda})\\
			\Omega_{\mathbf{m}}(\lambda)-b_{\mathbf{m},\lambda}\Lambda_{1}(\lambda,b_{\mathbf{m},\lambda})
		\end{pmatrix}\right\rangle_{\mathbb{R}^2}\neq0.
\end{equation}
Differentiating \eqref{matrix Mn}, we infer
$$\partial_bM_n(\lambda,b)=\begin{pmatrix}
    -\partial_b\big(b\Lambda_1(\lambda,b)\big) & \partial_b\big(b\Lambda_n(\lambda,b)\big)\\
    -\partial_b\big(\Lambda_n(\lambda,b)\big) & \partial_b\Big(\Lambda_1(\lambda,b)-b\Omega_n(\lambda b)\Big)
\end{pmatrix}.$$
We study each term separately. First, from \eqref{Bessel derivatives}, we have
\begin{align*}
    \partial_b\Big(b\Lambda_1(\lambda,b)\Big)&=K_1(\lambda)\Big(I_1(\lambda b)+\lambda bI_1'(\lambda b)\Big)=\lambda bI_{0}(\lambda b)K_0(\lambda).
\end{align*}
Together with the asymptotic \eqref{asy:bn} and continuity of the modified Bessel functions, we obtain
\begin{align*}
    \partial_b\Big(b\Lambda_1(\lambda,b)\Big)|_{b=b_{\mathbf{m},\lambda}}\underset{\mathbf{m}\to\infty}{=}\lambda I_0(\lambda)K_1(\lambda)+O\left(\frac{1}{\mathbf{m}}\right).
\end{align*}
Next, one readily has from \eqref{svi:diffkLbd} that
$$\partial_b\big(\Lambda_n(\lambda,b)\big)\underset{n\to\infty}{=}\frac{b^{n-1}}{2}+O\left(\frac{b^{n-1}}{2n}\right).$$
Hence, together with \eqref{asy:bn}, we get
\begin{align*}
    \partial_b\big(\Lambda_{\mathbf{m}}(\lambda,b)\big)|_{b=b_{\mathbf{m},\lambda}}\underset{\mathbf{m}\to\infty}{=}\frac{e^{-\beta(\lambda)}}{2}+O\left(\frac{1}{\mathbf{m}}\right).
\end{align*}
Then, still from \eqref{svi:diffkLbd}, we have
$$\partial_b\big(b\Lambda_n(\lambda,b)\big)=\Lambda_n(\lambda,b)+b\partial_b\big(\Lambda(\lambda,b)\big)\underset{n\to\infty}{=}\frac{b^{n}}{2}+O\left(\frac{b^{n}}{2n}\right).$$
Thus, with \eqref{asy:bn}, we infer
\begin{align*}
    \partial_b\big(b\Lambda_n(\lambda,b)\big)|_{b=b_{\mathbf{m},\lambda}}\underset{\mathbf{m}\to\infty}{=}\frac{e^{-\beta(\lambda)}}{2}+O\left(\frac{1}{\mathbf{m}}\right).
\end{align*}
Finally,
\begin{align*}
    \partial_b\Big(\Lambda_1(\lambda,b)-b\Omega_n(\lambda b)\Big)=\lambda I_1'(\lambda b)K_1(\lambda)-\big(P_1(\lambda b)-P_n(\lambda b)\big)-\lambda b\big(P_1'(\lambda b)-P_n'(\lambda b)\big).
\end{align*}
Making appeal to \eqref{decay:InKnHR}, we find
\begin{align*}
    \partial_b\Big(\Lambda_1(\lambda,b)-b\Omega_n(\lambda b)\Big)\underset{n\to\infty}{=}\lambda I_1'(\lambda b)K_1(\lambda)-P_1(\lambda b)-\lambda b P_1'(\lambda b)+O\left(\frac{1}{n}\right).
\end{align*}
Using the asymptotic \eqref{asy:bn} and the relation \eqref{useful:diff}, we deduce that
\begin{align*}
    \partial_b\Big(\Lambda_1(\lambda,b)-b\Omega_{\mathbf{m}}(\lambda b)\Big)|_{b=b_{\mathbf{m},\lambda}}&\underset{\mathbf{m}\to\infty}{=}\lambda I_1'(\lambda)K_1(\lambda)-P_1(\lambda)-\lambda P_1'(\lambda)+O\left(\frac{1}{\mathbf{m}}\right)\\
    &\underset{\mathbf{m}\to\infty}{=}-I_1(\lambda)\Big(K_1(\lambda)+\lambda K_1'(\lambda)\Big)+O\left(\frac{1}{\mathbf{m}}\right)\\
    &\underset{\mathbf{m}\to\infty}{=}\lambda I_1(\lambda)K_0(\lambda)+O\left(\frac{1}{\mathbf{m}}\right).
\end{align*}
Consequently, we can write
$$\partial_{b}M_{\mathbf{m}}(\lambda,b_{\mathbf{m},\lambda})\underset{\mathbf{m}\to\infty}{=}\mathbb{M}(\lambda)+O\left(\frac{1}{\mathbf{m}}\right),$$
where
$$\mathbb{M}(\lambda)\triangleq\frac{1}{2}\begin{pmatrix}
    -2\lambda I_0(\lambda)K_1(\lambda) & e^{-\beta(\lambda)}\\
    -e^{-\beta(\lambda)} & 2\lambda I_1(\lambda)K_0(\lambda)
\end{pmatrix}.$$
Let us pass to find the asymptotic of $v_{\mathbf{m}, \lambda}$ and $y_{\mathbf{m}, \lambda}$, whose expressions are given in \eqref{genker:b} and \eqref{genim:b}, respectively. Recalling \eqref{svi:diffkLbd} and \eqref{asy:bn}, it results
    \begin{align*}
        -b_{\mathbf{m},\lambda}\Lambda_{\mathbf{m}}(\lambda,b_{\mathbf{m},\lambda})&\underset{\mathbf{m}\to\infty}{=}-\frac{\left(1-\frac{\beta(\lambda)}{\mathbf{m}}+O\left(\frac{1}{\mathbf{m}^2}\right)\right)^{\mathbf{m}+1}}{2\mathbf{m}}+O\left(\frac{1}{\mathbf{m}^2}\right)\\
        &\underset{\mathbf{m}\to\infty}{=}-\frac{e^{-\beta(\lambda)}}{2\mathbf{m}}+O\left(\frac{1}{\mathbf{m}^2}\right)
    \end{align*}
    and
    $$\Lambda_{\mathbf{m}}(\lambda,b_{\mathbf{m},\lambda})\underset{\mathbf{m}\to\infty}{=}\frac{e^{-\beta(\lambda)}}{2\mathbf{m}}+O\left(\frac{1}{\mathbf{m}^2}\right).$$
    Besides, by virtue of \eqref{def:Lbd and Omg}, \eqref{expand:Pn} and \eqref{asy:bn},
    \begin{align*}
        \Omega_{\mathbf{m}}(\lambda)-b_{\mathbf{m},\lambda}\Lambda_{1}(\lambda,b_{\mathbf{m},\lambda})&\underset{\mathbf{m}\to\infty}{=}I_1(\lambda)K_1(\lambda)-\frac{1}{2\mathbf{m}}-\left(1-\frac{\beta(\lambda)}{\mathbf{m}}\right)I_1(\lambda)K_1(\lambda)+\frac{\beta(\lambda)}{\mathbf{m}}\lambda I_1'(\lambda)K_1(\lambda)+O\left(\frac{1}{\mathbf{m}^2}\right)\\
        &\underset{\mathbf{m}\to\infty}{=}\frac{1}{\mathbf{m}}\left(\lambda\beta(\lambda)I_0(\lambda)K_1(\lambda)-\frac{1}{2}\right)+O\left(\frac{1}{\mathbf{m}^2}\right).
    \end{align*}
Thus, the transversality function admits the following asymptotic
$$T_{\mathbf{m},\lambda}\underset{\mathbf{m}\to\infty}{=}\frac{-1}{4\mathbf{m}}\left\langle\mathbb{M}(\lambda)\begin{pmatrix}
    e^{-\beta(\lambda)}\\
    1-2\lambda\beta(\lambda)I_0(\lambda)K_1(\lambda)
\end{pmatrix},\begin{pmatrix}
    e^{-\beta(\lambda)}\\
    2\lambda\beta(\lambda) I_0(\lambda)K_1(\lambda)-1
\end{pmatrix}\right\rangle+O\left(\frac{1}{\mathbf{m}^3}\right).$$
In what follows, we study the sign of
\begin{align*}
    \tau(\lambda)&\triangleq\left\langle\mathbb{M}(\lambda)\begin{pmatrix}
    e^{-\beta(\lambda)}\\
    1-2\lambda\beta(\lambda)I_0(\lambda)K_1(\lambda)
\end{pmatrix},\begin{pmatrix}
    e^{-\beta(\lambda)}\\
    2\lambda\beta(\lambda) I_0(\lambda)K_1(\lambda)-1
\end{pmatrix}\right\rangle\\
&=e^{-2\beta(\lambda)}\big(1-\lambda I_0(\lambda)K_1(\lambda)-2\lambda\beta(\lambda)I_0(\lambda)K_1(\lambda)\big)-\lambda I_1(\lambda)K_0(\lambda)\big(1-2\lambda\beta(\lambda)I_0(\lambda)K_1(\lambda)\big)^2.
\end{align*}
Using the wronskian identity \eqref{wron}, we can write
\begin{equation*}
    \tau(\lambda)=e^{-2\beta(\lambda)} \big(\lambda I_1(\lambda)K_0(\lambda)-2\lambda\beta(\lambda)I_0(\lambda)K_1(\lambda)\big)-\lambda I_1(\lambda)K_0(\lambda)\big(1-2\lambda\beta(\lambda)I_0(\lambda)K_1(\lambda)\big)^2.
\end{equation*}
Coming back to \eqref{diffP0} and using the fact that $P_0$ is strictly decreasing, we find 
$$I_1K_0<I_0K_1.$$
Inserting this into \eqref{wron}, we find
\begin{equation}\label{B12}
    0<\lambda I_1(\lambda)K_0(\lambda)<\frac{1}{2}<\lambda I_0(\lambda)K_1(\lambda)<1.
\end{equation}
Combining \eqref{B12} with \eqref{bndbeta}, we deduce that
$$\lambda I_1(\lambda) K_0(\lambda) - 2 \lambda \beta(\lambda) I_0(\lambda) K_1(\lambda) < \frac{1}{2} - 1 = - \frac{1}{2}<0.$$
Thus, 
$$\tau(\lambda)\leqslant e^{-2\beta(\lambda)} \big(\lambda I_1(\lambda)K_0(\lambda)-2\lambda\beta(\lambda)I_0(\lambda)K_1(\lambda)\big)<0.$$
This implies that for $\mathbf{m}$ large enough,
$$T_{\mathbf{m},\lambda}>0.$$
In particular, \eqref{def:Tmlbd} holds. This achieves the proof of Proposition \ref{prop:CRb}.
\end{proof}

\subsection{Bifurcation with the inverse Rossby radius} \label{sec:bif_lambda}
In this subsection, we assume that $b\in(0,1)$ is fixed and we denote
$$\mathcal{D}_{n,b}(\lambda)\triangleq\mathcal{D}_{n}(\lambda,b),\qquad\mathcal{D}_{\infty,b}(\lambda)\triangleq\mathcal{D}_{\infty}(\lambda,b),\qquad G_{b}(\lambda,f_1,f_2)\triangleq G(\lambda,b,f_1,f_2).$$
As before, in order to find a nontrivial kernel and then possible bifurcation points, we have to answer the following question:
\begin{center}
    {\it Do there exist zeros $\lambda$ of $\mathcal{D}_{n,b}$ in $(0,\infty)$ ?}
\end{center}
The following numerical simulations answer positively to this question and moreover the zero seems to be unique.

\begin{figure}[!ht]
\centering
	\begin{subfigure}[l]{0.3\textwidth}
		\includegraphics[width=\textwidth]{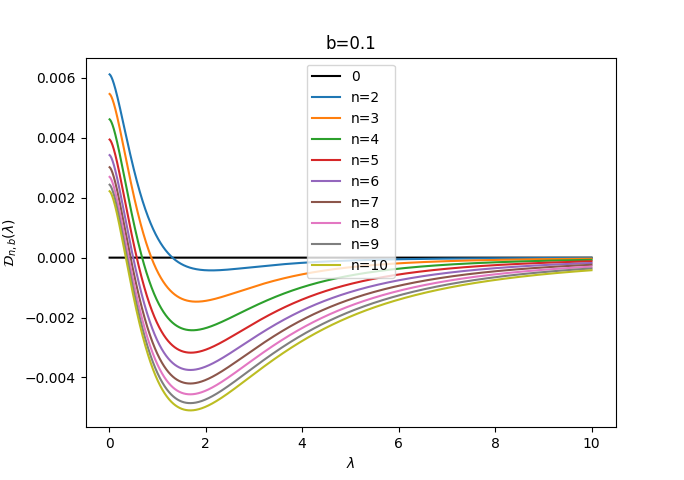}
	\end{subfigure}
	\begin{subfigure}[c]{0.3\textwidth}
		\includegraphics[width=\textwidth]{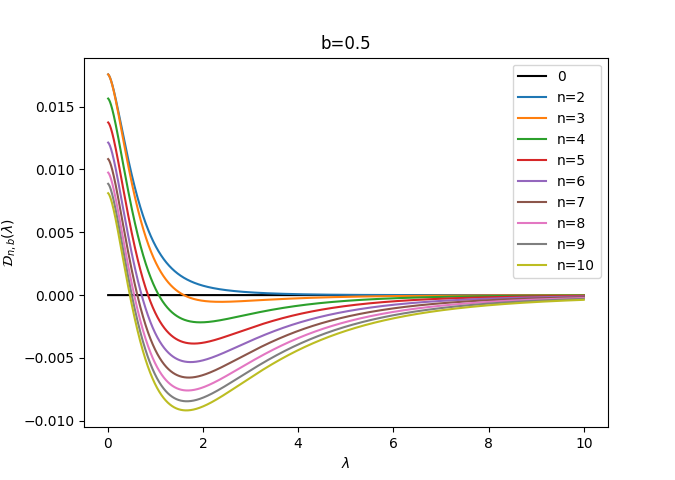}
	\end{subfigure}
	\begin{subfigure}[r]{0.3\textwidth}
		\includegraphics[width=\textwidth]{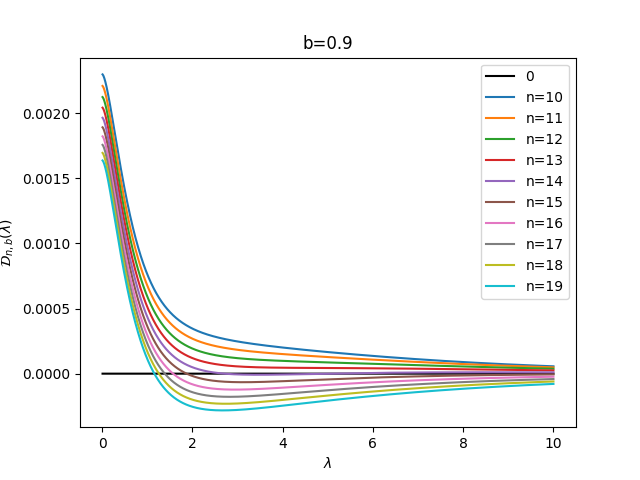}
	\end{subfigure}
	\caption{Graphs of $\lambda\mapsto\mathcal{D}_{n,b}(\lambda)$ for $b\in\{0.1,0.5,0.9\}$ and for different values of $n.$}\label{detlbd}
\end{figure}
\begin{figure}[!ht]
	\centering
	\begin{subfigure}[l]{0.3\textwidth}
		\includegraphics[width=\textwidth]{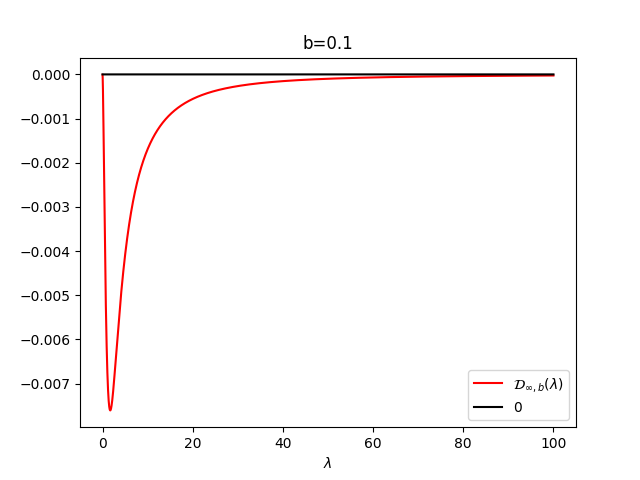}
	\end{subfigure}
	\begin{subfigure}[c]{0.3\textwidth}
		\includegraphics[width=\textwidth]{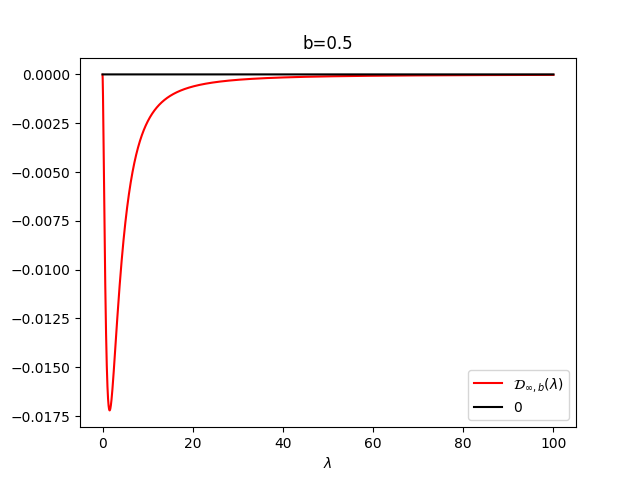}
	\end{subfigure}
	\begin{subfigure}[r]{0.3\textwidth}
		\includegraphics[width=\textwidth]{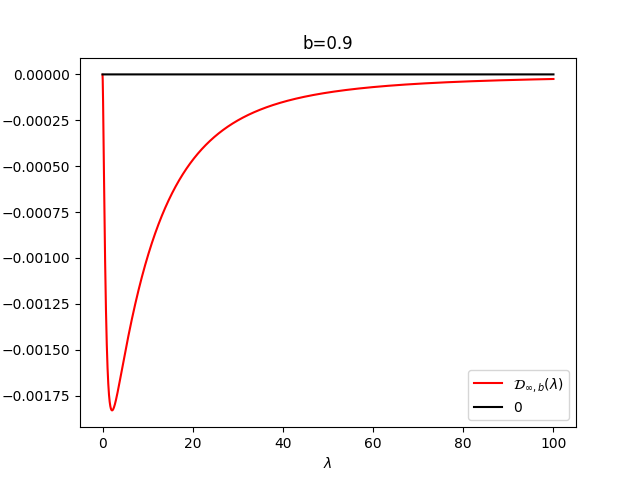}
	\end{subfigure}
	\caption{Graphs of the limiting profile $\lambda\mapsto\mathcal{D}_{\infty,b}(\lambda)$ for $b\in\{0.1,0.5,0.9\}.$}\label{detlimlbd}
\end{figure}

Our first goal is to prove the following analytical result, validating the above mentioned numerical observations in the asymptotic $n\to \infty$. 
\begin{prop}\label{prop seq lbd}
Let $b\in(0,1)$ and $\lambda_{\textnormal{max}}>0.$ There exists $N(b)\triangleq N(b,\lambda_{\textnormal{max}})\in\mathbb{N}^{*}$ such that, for all $n\in\mathbb{N}^{*}$, $n\geqslant N(b)$, the equation 
$$\mathcal{D}_{n,b}(\lambda)=0,$$
admits a unique solution $\lambda_{n,b}\in(0,\lambda_{\textnormal{max}}).$ Moreover, the sequence $(\lambda_{n,b})_{n\geqslant N(b)}$ is strictly decreasing and converges to $0$.
\end{prop}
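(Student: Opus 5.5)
The plan is to follow the strategy of Lemmas \ref{lem:accu1}--\ref{lem:asyan}, with the roles of $b$ and $\lambda$ exchanged. The sign change comes from the two ends of $[0,\lambda_{\textnormal{max}}]$, and uniqueness will come from a uniform sign of $\partial_\lambda\mathcal{D}_{n,b}$ near $\lambda=0$.

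\emph{Existence and localisation.} By \eqref{lim det},
$$\mathcal{D}_{n,b}(0)=\frac{b}{4n^2}\big[n(1-b^2)-1+b^{2n}\big],$$
which is strictly positive for $n$ large (of order $\frac{b(1-b^2)}{4n}$). By \eqref{sign Dinfty} and continuity, for every $\varepsilon\in(0,\lambda_{\textnormal{max}})$ we have $\max_{[\varepsilon,\lambda_{\textnormal{max}}]}\mathcal{D}_{\infty,b}<0$. The uniform convergence \eqref{unifCV:D-2var} then yields $N(\varepsilon,b)$ such that $\mathcal{D}_{n,b}<0$ on $[\varepsilon,\lambda_{\textnormal{max}}]$ for all $n\geqslant N(\varepsilon,b)$. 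The intermediate value theorem gives a zero in $(0,\varepsilon)$, and every zero in $(0,\lambda_{\textnormal{max}})$ lies in $(0,\varepsilon)$. Since $\varepsilon$ is arbitrary, all zeros tend to $0$ as $n\to\infty$.

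\emph{Small-$\lambda$ expansion, uniform in $n$.} I will use the small-argument asymptotics \eqref{asymptotic expansion of small argument}, in particular
$$K_1(x)=\frac{1}{x}+\frac{x}{2}\log\frac{x}{2}+O(x),\qquad \Lambda_1(\lambda,b)=\frac{b}{2}+\frac{b\lambda^2}{4}\log\lambda+O(\lambda^2).$$
For $n\geqslant 2$ I also use \eqref{expand:Pn}, namely $P_n(x)=\frac{1}{2n}+O(x^2n^{-3})$, together with $\Lambda_n(\lambda,b)=\frac{b^n}{2n}\big(1+O(\lambda^2/n)\big)$. The goal is the expansion
\begin{equation*}
\mathcal{D}_{n,b}(\lambda)=\mathcal{D}_{n,b}(0)+\frac{b(1-b^2)^2}{8}\lambda^2\log\lambda+O\big(\lambda^2\big)+O\Big(\frac{\lambda^2}{n}\Big),
\end{equation*}
with constants uniform in $n$ and $\lambda\in[0,\varepsilon]$. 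The leading term comes from
$$\mathcal{D}_{\infty,b}(\lambda)\approx\frac b2\cdot\frac{\lambda(1-b^2)}{2}\cdot\frac{\lambda(1-b^2)}{2}\log\lambda,$$
since $I_1(\lambda)-bI_1(\lambda b)\approx\frac{\lambda(1-b^2)}{2}$ and $K_1(\lambda)-bK_1(\lambda b)\approx\frac{\lambda}{2}(1-b^2)\log\lambda$. The $n$-dependent corrections in \eqref{decomp:Dn} contribute only $O(\lambda^2)$.

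I want the matching expansion for $\partial_\lambda\mathcal{D}_{n,b}$, namely $\frac{b(1-b^2)^2}{8}\lambda(2\log\lambda+O(1))$ uniformly in $n$. Differentiating the Bessel series term by term, using \eqref{Bessel derivatives} and \eqref{decay:InKnHR}, should give it. This step is the main obstacle: the logarithmic terms of $K_0$ and $K_1$ must be tracked through every product in \eqref{decomp:Dn}, with error bounds independent of $n$. I would first attempt it by writing each factor as an analytic part plus an explicit $\log$ part and bounding the remainders crudely.

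\emph{Conclusion.}
\begin{itemize}
\item \emph{Uniqueness and simplicity.} The derivative expansion gives $\partial_\lambda\mathcal{D}_{n,b}<0$ on $(0,\varepsilon]$ for $\varepsilon$ small, uniformly in $n$. So $\mathcal{D}_{n,b}$ is strictly decreasing there, the zero $\lambda_{n,b}$ is unique in $(0,\lambda_{\textnormal{max}})$, and it is simple.
\item \emph{Asymptotics.} Inserting $\lambda_{n,b}$ into the expansion gives $\lambda_{n,b}^2\log(\lambda_{n,b}^2)\sim-\frac{4}{(1-b^2)n}$, hence \eqref{lbdnasy}.
\item \emph{Strict monotonicity of the sequence.} I will compare consecutive determinants rather than rely on the asymptotic law, whose error may not beat $\lambda_n-\lambda_{n+1}\sim\lambda_n/n$. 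I will show that $\mathcal{D}_{n+1,b}(\lambda)-\mathcal{D}_{n,b}(\lambda)=-\frac{b(1-b^2)}{4n^2}+o(n^{-2})<0$ uniformly for $\lambda\in[0,\varepsilon]$. This uses the exact $n$-structure of $P_n$ and $\Lambda_n$, whose $\lambda$-dependent parts are $O(\lambda^2n^{-2})$. Then $\mathcal{D}_{n+1,b}(\lambda_{n,b})<0=\mathcal{D}_{n+1,b}(\lambda_{n+1,b})$, and the strict decrease of $\mathcal{D}_{n+1,b}$ on $(0,\varepsilon]$ forces $\lambda_{n+1,b}<\lambda_{n,b}$.
\end{itemize}
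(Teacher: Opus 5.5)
Your proposal is correct. For existence, localisation near $\lambda=0$ and the key expansion $\partial_\lambda\mathcal{D}_{n,b}(\lambda)=\frac{b(1-b^2)^2}{4}\lambda\log\lambda+O(\lambda)$ (uniform in $n$), you follow the paper. You handle uniqueness and the monotonicity of $(\lambda_{n,b})_n$ differently.

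\textbf{Uniqueness.} The paper evaluates the derivative only at an arbitrary zero, finds it negative, and then rules out two consecutive zeros as in Lemma \ref{lem:asyan}. You instead take $\partial_\lambda\mathcal{D}_{n,b}<0$ on a fixed interval $(0,\varepsilon]$ directly from the uniform expansion, which gives uniqueness and simplicity at once. The ingredient is the same; your packaging is slightly cleaner.

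\textbf{Strict decrease of the sequence.} This is where the routes genuinely differ. The paper reads strict decrease off from $\lambda_{n,b}\sim\frac{2}{\sqrt{(1-b^2)n\log n}}$, and it also develops the refined law \eqref{asy_lambda}, which it needs later for transversality. However, an equivalence cannot resolve a consecutive gap of relative size about $\frac{1}{2n}$, and neither can the $O(1/\log n)$ relative error in \eqref{asy_lambda}. Your comparison does close this step: $\mathcal{D}_{n+1,b}<\mathcal{D}_{n,b}$ near $0$, combined with the strict decrease of $\mathcal{D}_{n+1,b}$ on $(0,\varepsilon]$, forces $\lambda_{n+1,b}<\lambda_{n,b}$. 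It needs no asymptotic law for $\lambda_{n,b}$ at all.

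\textbf{First correction.} Write $\delta_n=P_{n+1}-P_n=-\frac{1}{2n(n+1)}+O(x^2n^{-4})$. Then
$\mathcal{D}_{n+1,b}-\mathcal{D}_{n,b}=bA_{n,b}\,\delta_n(\lambda b)-\delta_n(\lambda)B_{n+1,b}+b\big(\Lambda_{n+1}^2-\Lambda_n^2\big)$.
The $\lambda^2\log\lambda$ contributions cancel, because $(1+b^2)\Lambda_1-bP_1(\lambda)-bP_1(\lambda b)$ has no such term. You are left with
$-\frac{b(1-b^2)}{4n(n+1)}+O(\lambda^2n^{-2})+O(n^{-3})$.
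The middle term is not $o(n^{-2})$ uniformly on $[0,\varepsilon]$, so your stated bound is slightly off. Either shrink $\varepsilon$, or evaluate only at $\lambda=\lambda_{n,b}\to0$, which is all you actually use.

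\textbf{Second correction.} The leading balance $\lambda_{n,b}^2\log(\lambda_{n,b}^2)\sim-\frac{4}{(1-b^2)n}$ yields only the first-order law. The $\log\log$ correction in \eqref{lbdnasy} requires the paper's second-order step. The proposition does not need it, so this only affects your side remark.
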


\begin{rem}
    The threshold $\lambda_{\textnormal{max}}$ is introduced only to simplify the proof by avoiding the possible accumulation of zeros at infinity. However, the numerical simulations reported in Figure \ref{detlbd} suggest that such a phenomenon does not occur and therefore we strongly believe that the result remains valid on the entire half-line $(0,\infty)$. The main difficulty is to prove, uniformly with respect to $\lambda$ in an interval of the form $\left[\underline{\lambda},\infty\right)$, that $\mathcal{D}_{n,b}$ does not vanish for all $n$ large enough. Since this last property is satisfied by the limiting profile $\mathcal{D}_{\infty,b}$, one expects that the desired conclusion can be achieved by combining the uniform convergence \eqref{unifCV:D-2var} with asymptotic expansions of large arguments for $\mathcal{D}_{n,b},$ using \eqref{asymp large z}. In addition, as can be observed in Figure \ref{detlbd}, one may also try to show and exploit that the sequence of functions $(\mathcal{D}_{n,b})_n$ is asymptotically decreasing. This latter fact can easily be obtained pointwise in $\lambda$, but is hard to get uniformly for $\lambda$ large.
\end{rem}

\begin{proof}
	$\blacktriangleright$ \textbf{Existence :} Recall from \eqref{lim det} that
	$$\mathcal{D}_{n,b}(0)=\frac{b}{4n^2}\Big[n(1-b^2)-1+b^{2n}\Big].$$
	Therefore, we deduce the existence of $N_1(b)\in\mathbb{N}^*$ such that
	$$\forall n\in\mathbb{N}^*,\quad n\geqslant N_1(b)\quad\Rightarrow\quad\mathcal{D}_{n,b}(0)>0.$$
	Since \eqref{sign Dinfty} implies in particular that $\mathcal{D}_{\infty,b}(\lambda_{\textnormal{max}})<0,$ then by pointwise convergence we have that there exists $N_2(b)\triangleq N_2(b,\lambda_{\textnormal{max}})\in\mathbb{N}^*$ such that
	$$\forall n\in\mathbb{N}^*,\quad n\geqslant N_2(b)\quad\Rightarrow\quad\mathcal{D}_{n,b}(\lambda_{\textnormal{max}})<0.$$
	Setting $N_3(b)\triangleq\max(N_1(b),N_2(b))$, by virtue of the intermediate value theorem, we obtain for all integer $n\geqslant N_3(b)$ the existence of (at least) one solution $\lambda_{n,b}\in(0,\lambda_{\textnormal{max}})$ of the equation
	\begin{equation*}
		\mathcal{D}_{n,b}(\lambda_{n,b})=0.
	\end{equation*}
    Now, proceeding similarly to the proof of Lemma \ref{lem:accu1} (actually here is simpler), we get
    $$\lambda_{n,b}\underset{n\to\infty}{\longrightarrow}0.$$
    $\blacktriangleright$ \textbf{First order asymptotic :} Using the expression \eqref{definition of modified Bessel function of first kind}, we have the following asymptotics
$$I_n(x)\underset{x\to0}{=}\frac{\left(\frac{1}{2}x\right)^n}{\Gamma(n+1)}+\frac{\left(\frac{1}{2}x\right)^{n+2}}{\Gamma(n+2)}+O\left(x^{n+4}\right).$$
As for the second kind modified Bessel functions, we use the expression \eqref{power:Kn} to obtain
$$K_1(x)\underset{x\to0}{=}\frac{1}{x}+\frac{x}{2}\log(x)+O(x)$$
and, for $n\geqslant2,$
$$K_n(x)\underset{x\to0}{=}\frac{\Gamma(n)}{2\left(\frac{1}{2}x\right)^n}+O\left(\frac{1}{x^{n-2}}\right).$$
Combining the foregoing expansions, we obtain, for $b\in(0,1],$
\begin{equation*}
    \Lambda_1(\lambda,b)=I_1(\lambda b)K_1(\lambda)\underset{\lambda\to0}{=}\frac{b}{2}+\frac{b\lambda^2}{4}\log(\lambda)+O\left(\lambda^2\right)
\end{equation*}
and, for $n\geqslant2,$
\begin{equation}\label{svi0:Lbdn}
    \Lambda_n(\lambda,b)\underset{\lambda\to0}{=}b^n\left(\frac{1}{2n}+O\left(\lambda^2\right)\right).
\end{equation}
In what follows, we denote
$$A_{n,b}(\lambda)\triangleq\Omega_{n}(\lambda)-b\Lambda_{1}(\lambda,b)\qquad\textnormal{and}\qquad B_{n,b}(\lambda)\triangleq\Lambda_{1}(\lambda,b)-b\Omega_{n}(\lambda b).$$
Putting together the previous asymptotics, we find
\begin{equation}\label{svi0:An}
    A_{n,b}(\lambda)\underset{\lambda\to0}{=}\frac{1-b^2}{2}-\frac{1}{2n}+\frac{1-b^2}{4}\lambda^2\log(\lambda)+O(\lambda^2)
\end{equation}
and
\begin{equation}\label{svi0:Bn}
    B_{n,b}(\lambda)\underset{\lambda\to0}{=}\frac{b}{2n}+\frac{b(1-b^2)}{4}\lambda^2\log(\lambda)+O(\lambda^2).
\end{equation}
Gathering \eqref{def:Dn}, \eqref{svi0:Lbdn}, \eqref{svi0:An} and \eqref{svi0:Bn}, we infer
\begin{equation}\label{asympto:Dnl0}
    \mathcal{D}_{n,b}(\lambda)\underset{\lambda\to0}{=}\mathcal{D}_{n,b}(0)+\frac{b(1-b^2)^2}{8}\lambda^2\log(\lambda)+O(\lambda^2).
\end{equation}
Note that the second term of the previous expansion is independent of $n$ and the $O(\lambda^2)$ contains terms bounded in the variable $n.$ Hence, using in particular \eqref{lim det}, we have the equivalence
$$\lambda_{n,b}^2\log\left(\lambda_{n,b}^2\right)\underset{n\to\infty}{\sim}-\frac{16\mathcal{D}_{n,b}(0)}{b(1-b^2)^2}\underset{n\to\infty}{\sim}-\frac{4}{(1-b^2)n}\cdot$$
Let us consider
\begin{equation}\label{def:yn}
    y_{n,b}\triangleq\frac{1}{\lambda_{n,b}^2}\cdot
\end{equation}
Then,
\begin{equation}\label{equiv:fyn}
    \frac{\log(y_{n,b})}{y_{n,b}}\underset{n\to\infty}{\sim}\frac{4}{(1-b^2)n}\cdot
\end{equation}
Therefore,
\begin{equation}\label{svi:logy}
    \log(y_{n,b})\underset{n\to\infty}{=}\log(n)+\log\big(\log(y_{n,b})\big)+\log\left(\frac{1-b^2}{4}\right)+o(1).
\end{equation}
From \eqref{equiv:fyn}, we also get
$$y_{n,b}\underset{n\to\infty}{=}O\big(n\log(y_{n,b})\big),$$
from which we deduce
\begin{equation}\label{y:bnd:n2}
    y_{n,b}\underset{n\to\infty}{=}O(n^2).
\end{equation}
Since the $\log$ is increasing, we obtain from \eqref{y:bnd:n2} that
$$\log(y_{n,b})\underset{n\to\infty}{=}O\big(\log(n)\big)\qquad\textnormal{and}\qquad\log\big(\log(y_{n,b})\big)\underset{n\to\infty}{=}O\Big(\log\big(\log(n)\big)\Big)\underset{n\to\infty}{=}o\big(\log(n)\big).$$
Inserting this last information into \eqref{svi:logy}, we infer
\begin{equation}\label{equiv:logyn}
    \log(y_{n,b})\underset{n\to\infty}{=}\log(n)+o\big(\log(n)\big),\qquad\textnormal{i.e.}\qquad\log(y_{n,b})\underset{n\to\infty}{\sim}\log(n).
\end{equation}
Putting together \eqref{def:yn}, \eqref{equiv:fyn} and \eqref{equiv:logyn}, we finally get
\begin{equation}\label{svi:lbdn}
    \lambda_{n,b}^2\underset{n\to\infty}{\sim}\frac{4}{(1-b^2)n\log(n)},\qquad\textnormal{i.e.}\qquad\lambda_{n,b}\underset{n\to\infty}{\sim}\frac{2}{\sqrt{(1-b^2)n\log(n)}}\cdot
\end{equation}
The asymptotic \eqref{svi:lbdn} gives the desired asymptotic strict monotonicity of the sequence $(\lambda_{n,b})_n.$\\
$\blacktriangleright$ \textbf{Next order asymptotic :} Observe, by virtue of \eqref{lim det} and \eqref{svi:lbdn}, that
$$\mathcal{D}_{n,b}(0)\underset{n\to\infty}{=}\frac{b(1-b^2)}{4n}+O\left(\frac{1}{n^2}\right)\underset{n\to\infty}{=}\frac{b(1-b^2)}{4n}+O\left(\lambda_{n,b}^2\right).$$
Therefore, coming back to \eqref{asympto:Dnl0}, we infer
$$\lambda_{n,b}^2\log\left(\lambda_{n,b}^2\right)\underset{n\to\infty}{=}-\frac{4}{(1-b^2)n}+O(\lambda_{n,b}^2).$$
Dividing by $\lambda_{n,b}^2$, we find
$$\log\left(\lambda_{n,b}^2\right)\underset{n\to\infty}{=}-\frac{4}{(1-b^2)n\lambda_{n,b}^2}+O(1).$$
But, from \eqref{def:yn}, \eqref{svi:logy} and \eqref{equiv:logyn}, we have
\begin{equation}\label{loglbd2}
    \log\left(\lambda_{n,b}^2\right)\underset{n\to\infty}{=}-\log(n)-\log\big(\log(n)\big)+O(1).
\end{equation}
Combining the last two asymptotics and rearranging terms, we get
\begin{align*}
    \lambda_{n,b}^2&\underset{n\to\infty}{=}\frac{4}{(1-b^2)n}\left(\frac{1}{\log(n)+\log\big(\log(n)\big)+O(1)}\right)\\
    &\underset{n\to\infty}{=}\frac{4}{(1-b^2)n\log(n)}\left(1-\frac{\log\big(\log(n)\big)}{\log(n)}+O\left(\frac{1}{\log(n)}\right)\right).
\end{align*}
From this, we deduce that
\begin{equation} \label{asy_lambda}
    \lambda_{n,b}\underset{n\to\infty}{=}\frac{2}{\sqrt{(1-b^2)n\log(n)}}\left(1-\frac{\log\big(\log(n)\big)}{2\log(n)}+O\left(\frac{1}{\log(n)}\right)\right).
\end{equation}
	$\blacktriangleright$ \textbf{Uniqueness :} Differentiating the relation \eqref{asympto:Dnl0}, we find
    $$\mathcal{D}_{n,b}'(\lambda)\underset{\lambda\to0}{=}\frac{b(1-b^2)^2}{4}\lambda\log(\lambda) +O(\lambda).$$
    According to \eqref{loglbd2}, we have
    $$\log(\lambda_{n,b})\underset{n\to\infty}{=}-\frac{1}{2}\log(n)+O\Big(\log\big(\log(n)\big)\Big).$$
    Combined with \eqref{asy_lambda}, we get
    $$\lambda_{n,b}\log\left(\lambda_{n,b}\right)\underset{n\to\infty}{=}-\frac{1}{\sqrt{1-b^2}} \sqrt{\frac{\log(n)}{n}}+O\left(\frac{\log\big(\log(n)\big)}{\sqrt{n\log(n)}}\right).$$
    From this, it follows that
    $$\mathcal{D}_{n,b}'\left(\lambda_{n,b}\right)\underset{n\to\infty}{=}-\frac{b\left(1-b^2\right)^{\frac{3}{2}}}{4} \sqrt{\frac{\log(n)}{n}}+O\left(\frac{\log\big(\log(n)\big)}{\sqrt{n\log(n)}}\right).$$
    In particular, the following asymptotic holds
    $$\mathcal{D}_{n,b}'(\lambda_{n,b})\underset{n\to\infty}{\sim}-\frac{b(1-b^2)^{\frac{3}{2}}}{4}\sqrt{\frac{\log(n)}{n}}\cdot$$
    From this and the fact that $b\in(0,1)$, we have asymptotically
    $$\mathcal{D}_{n,b}'(\lambda_{n,b})<0.$$
    This being true for any zero $\lambda_{n,b}$, reasoning as in the proof of Lemma \ref{lem:asyan}, we conclude the uniqueness. This achieves the proof of Proposition \ref{prop seq lbd}.
\end{proof}
With Proposition \ref{prop seq lbd} in hand, we can now state the bifurcation hypothesis allowing to prove Theorem \ref{thm stationary Vstates QGSW}-$(ii)$ after application of the Crandall-Rabinowitz Theorem \ref{Crandall-Rabinowitz theorem}.
\begin{prop}\label{prop:CRlbd}
	Let $b\in(0,1)$, $\alpha\in(0,1)$, $\lambda_{\textnormal{max}}>0$ and $\mathbf{m}\in\mathbb{N}^*$ such that $\mathbf{m}\geqslant N(b),$ with $N(b)$ defined as in Proposition \ref{prop seq lbd}. Then the following assertions hold true.
	\begin{enumerate}[label=(\roman*)]
		\item There exists $r>0$ such that $G_{b}:(0,\lambda_{\textnormal{max}})\times B_{r,\mathbf{m}}^{1+\alpha}\times B_{r,\mathbf{m}}^{1+\alpha}\rightarrow Y_{\mathbf{m}}^{\alpha}$ is well-defined and of class $C^1.$
		\item The kernel $\ker\Big(DG_b(\lambda_{\mathbf{m},b},0,0)\Big)$ is one-dimensional and generated by
		\begin{equation}\label{genker:lbd}
		    v_{\mathbf{m},b}:\begin{array}[t]{rcl}
			\mathbb{T} & \rightarrow & \mathbb{C}^2\\
			w & \mapsto & \begin{pmatrix}
        -b\Lambda_{\mathbf{m}}(\lambda_{\mathbf{m},b},b)\\
			\Omega_{\mathbf{m}}(\lambda_{\mathbf{m},b})-b\Lambda_{1}(\lambda_{\mathbf{m},b},b)
		\end{pmatrix}\overline{w}^{\mathbf{m}-1}.
		\end{array}
		\end{equation}
	\item The range $R\Big(DG_b(\lambda_{\mathbf{m},b},0,0)\Big)$ is closed and of codimension one in $Y_{\mathbf{m}}^{\alpha}.$ It is given by the following orthogonality condition with respect to the scalar product in \eqref{scalar product},
	\begin{equation}\label{genim:lbd}
	    R\Big(DG_b(\lambda_{\mathbf{m},b},0,0)\Big)=\mathtt{span}(y_{\mathbf{m},b})^{\perp},\qquad y_{\mathbf{m},b}\triangleq\begin{pmatrix}
        \Lambda_{\mathbf{m}}(\lambda_{\mathbf{m},b},b)\\
			\Omega_{\mathbf{m}}(\lambda_{\mathbf{m},b})-b\Lambda_{1}(\lambda_{\mathbf{m},b},b)
		\end{pmatrix}e_{\mathbf{m}}.
	\end{equation}
	\item Transversality condition :
	\begin{equation}\label{tras:lbd}
	    \partial_{\lambda}DG_b(\lambda_{\mathbf{m},b},0,0)[v_{\mathbf{m},b}]\not\in R\Big(DG_b(\lambda_{\mathbf{m},b},0,0)\Big).
	\end{equation}
	\end{enumerate}
\end{prop}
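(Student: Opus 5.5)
The plan is to follow the scheme of the proof of Proposition \ref{prop:CRb}, with the roles of $b$ and $\lambda$ exchanged, and to use Proposition \ref{prop seq lbd} in place of Proposition \ref{prop seq b}.

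\textbf{Points $(i)$ to $(iii)$.} Point $(i)$ is Proposition \ref{reg G and lin op}-1 restricted to $\lambda\in(0,\lambda_{\textnormal{max}})$. For $(ii)$, I use the Fourier multiplier form \eqref{DGlbd}. This reduces the kernel to the modes $n$ for which $\det M_{n\mathbf{m}}(\lambda_{\mathbf{m},b},b)=0$. Uniqueness of the zero of $\mathcal{D}_{n\mathbf{m},b}$ in $(0,\lambda_{\textnormal{max}})$ (valid since $n\mathbf{m}\geqslant\mathbf{m}\geqslant N(b)$), together with strict monotonicity of $(\lambda_{n,b})_n$, gives $\lambda_{n\mathbf{m},b}\neq\lambda_{\mathbf{m},b}$ for $n\geqslant2$. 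Only the mode $n=1$ survives. Since $\Lambda_{\mathbf{m}}\neq0$, $M_{\mathbf{m}}$ is a nonzero singular matrix with one-dimensional kernel, spanned by $(-b\Lambda_{\mathbf{m}},\,\Omega_{\mathbf{m}}-b\Lambda_1)^\top$; this gives \eqref{genker:lbd}. For $(iii)$, closedness and codimension one follow from Proposition \ref{prop-fredholmness} and $(ii)$. The vector $(\Lambda_{\mathbf{m}},\,\Omega_{\mathbf{m}}-b\Lambda_1)^\top$ spans $\ker M_{\mathbf{m}}^\top$, which gives the inclusion of the range in $\mathtt{span}(y_{\mathbf{m},b})^\perp$. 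Equality then follows from the codimension count and \cite[Lem. B.1]{R25}, exactly as before.

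\textbf{Point $(iv)$: reduction.} Transversality is equivalent to $T_{\mathbf{m},b}\neq0$, where
$$T_{\mathbf{m},b}\triangleq\mathbf{m}\big\langle\partial_\lambda M_{\mathbf{m}}(\lambda_{\mathbf{m},b},b)\,V,\,Y\big\rangle_{\mathbb{R}^2},$$
with $V=(-b\Lambda_{\mathbf{m}},A_{\mathbf{m},b})^\top$ and $Y=(\Lambda_{\mathbf{m}},A_{\mathbf{m},b})^\top$, all evaluated at $\lambda_{\mathbf{m},b}$. The structure here is the opposite of the $b$-bifurcation: $b$ is fixed and $\lambda_{\mathbf{m},b}\to0$.

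\textbf{Size of each term.}
\begin{enumerate}[label=\textbullet]
\item By \eqref{svi0:Lbdn}, $\Lambda_{\mathbf{m}}=O(b^{\mathbf{m}}/\mathbf{m})$ is exponentially small.
\item By \eqref{svi0:An}, $A_{\mathbf{m},b}(\lambda_{\mathbf{m},b})\to\frac{1-b^2}{2}\neq0$.
\item The only non-negligible product in $T_{\mathbf{m},b}$ is therefore $\mathbf{m}A_{\mathbf{m},b}^2\,\partial_\lambda B_{\mathbf{m},b}(\lambda_{\mathbf{m},b})$.
\item The remaining terms involve at least one factor $\Lambda_{\mathbf{m}}$ or $\partial_\lambda\Lambda_{\mathbf{m}}$. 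Using \eqref{deriv:iterate} with the small-argument expansions, $\partial_\lambda\Lambda_{\mathbf{m}}=O(b^{\mathbf{m}}\lambda)$. Multiplied by $\mathbf{m}$ and by bounded quantities such as $\partial_\lambda A_{\mathbf{m},b}$, these terms are $O(\mathbf{m}b^{\mathbf{m}})$.
\end{enumerate}

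\textbf{Point $(iv)$: the leading term.} I differentiate \eqref{svi0:Bn} to get
$$\partial_\lambda B_{\mathbf{m},b}(\lambda)=\frac{b(1-b^2)}{2}\lambda\log(\lambda)+O(\lambda)$$
as $\lambda\to0$, with the $O(\lambda)$ uniform in $\mathbf{m}$. Inserting the asymptotic \eqref{asy_lambda}, in the form
$$\lambda_{\mathbf{m},b}\log(\lambda_{\mathbf{m},b})\sim-\frac{1}{\sqrt{1-b^2}}\sqrt{\frac{\log\mathbf{m}}{\mathbf{m}}},$$
I expect
$$T_{\mathbf{m},b}\underset{\mathbf{m}\to\infty}{\sim}-\frac{b(1-b^2)^{5/2}}{8}\sqrt{\mathbf{m}\log\mathbf{m}}.$$
This is nonzero, indeed negative, and dominates the exponentially small corrections, which gives \eqref{tras:lbd} for $\mathbf{m}$ large.

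\textbf{Main obstacle.} The delicate step is justifying the differentiated small-$\lambda$ expansion of $B_{n,b}$ uniformly in $n$. $B_{n,b}$ contains $b\,P_n(\lambda b)$, and I need $\partial_\lambda P_n(\lambda b)=O(\lambda/n^3)$ uniformly, not merely the crude bound \eqref{decay:InKnHR}. I would obtain this from \eqref{expand:Pnprima}, which holds on compact sets. The remaining parts of $B_{n,b}$ are $\Lambda_1$ and $bP_1(\lambda b)$, which do not depend on $n$. Their $\lambda\log\lambda$ term can be obtained by termwise differentiation of the convergent series \eqref{power:Kn} and \eqref{definition of modified Bessel function of first kind}. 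Termwise differentiation is legitimate because these are convergent power series in $\lambda$ and $\lambda^k\log\lambda$.
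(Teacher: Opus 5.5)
Your proposal is correct and follows the paper's proof essentially step by step. Points (i)–(iii) are argued identically. For (iv) the paper likewise reduces to $T_{\mathbf{m},b}\neq0$, expands $\partial_\lambda M_{\mathbf{m}}(\lambda_{\mathbf{m},b},b)$ via \eqref{svi0:An}, \eqref{svi0:Bn}, \eqref{svi0:Lbdn} and \eqref{asy_lambda}, and keeps only the contribution $\mathbf{m}A_{\mathbf{m},b}^2\,\partial_\lambda B_{\mathbf{m},b}$.

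Your constant $-\frac{b(1-b^2)^{5/2}}{8}$ is in fact the right one. The paper drops a factor $\frac12$ when inserting $\lambda_{\mathbf{m},b}\log(\lambda_{\mathbf{m},b})$ into the diagonal entries and gets $\frac14$; this is harmless for the sign. You can cross-check it through the identity $\langle\partial_\lambda M_{\mathbf{m}}V,Y\rangle=A_{\mathbf{m},b}\,\mathcal{D}_{\mathbf{m},b}'(\lambda_{\mathbf{m},b})$.

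Your ``main obstacle'' is milder than you state. \eqref{expand:Pnprima} only gives $O(1/\mathbf{m}^3)$, not the $O(\lambda/\mathbf{m}^3)$ you claim from it. But you do not need that: even the crude bound \eqref{decay:InKnHR} gives $b^2P_{\mathbf{m}}'(\lambda_{\mathbf{m},b}b)=O(1/\mathbf{m})$, which is negligible against $\sqrt{\log(\mathbf{m})/\mathbf{m}}$.
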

\begin{proof}
	$(i)$ Follows from Proposition \ref{reg G and lin op}-1.\\
	$(ii)$ Let $(h_{1},h_{2})\in X_{\mathbf{m}}^{1+\alpha}.$  We write
	\begin{equation*}
		h_{1}(w)=\sum_{n=1}^{\infty}h_{1,n}\overline{w}^{n\mathbf{m}-1}\quad\mbox{ and }\quad h_{2}(w)=\sum_{n=1}^{\infty}h_{2,n}\overline{w}^{n\mathbf{m}-1}.
	\end{equation*}
	Proposition \ref{reg G and lin op}-2 gives 
	\begin{equation}\label{DGb}
		DG_b(\lambda,0,0)[h_{1},h_{2}]=\sum_{n=1}^{\infty}n\mathbf{m}M_{n\mathbf{m}}(\lambda,b)\begin{pmatrix}
			h_{1,n}\\
			h_{2,n}
		\end{pmatrix}e_{n\mathbf{m}}.
	\end{equation}
	For $\lambda=\lambda_{\mathbf{m},b},$ we have 
	$$\det\Big(M_{\mathbf{m}}\big(\lambda_{\mathbf{m},b},b\big)\Big)=0.$$
	Thus, the kernel of $DG_b\big(\lambda_{\mathbf{m},b},0,0\big)$ is non trivial and it is one dimensional if and only if 
	\begin{equation*}
		\forall\, n\in\mathbb{N}^{*},\quad n\geqslant 2\quad\Rightarrow\quad\det\Big(M_{n\mathbf{m}}\big(\lambda_{\mathbf{m},b},b\big)\Big)\neq 0.
	\end{equation*}
	The previous condition is satisfied in view of Proposition \ref{prop seq lbd}.
	Hence, we have the equivalence 
	$$(h_{1},h_{2})\in\ker\Big(DG_b\big(\lambda_{\mathbf{m},b},0,0\big)\Big)\Leftrightarrow\left\lbrace\begin{array}{l}
		\forall n\in\mathbb{N}^{*},\quad n\geqslant 2\,\Rightarrow \,h_{1,n}=0=h_{2,n}\\
		\begin{pmatrix}
			h_{1,1}\\
			h_{2,1}
		\end{pmatrix}\in\ker\Big(M_{\mathbf{m}}\big(\lambda_{\mathbf{m},b},b\big)\Big).
	\end{array}\right.$$
	Therefore, we can select as generator of $\ker\Big(DG_b\big(\lambda_{\mathbf{m},b},0,0\big)\Big)$ the following function
	$$v_{\mathbf{m},b}:\begin{array}[t]{rcl}
		\mathbb{T} & \rightarrow & \mathbb{C}^{2}\\
		w & \mapsto & \begin{pmatrix}
        -b\Lambda_{\mathbf{m}}(\lambda_{\mathbf{m},b},b)\\
			\Omega_{\mathbf{m}}(\lambda_{\mathbf{m},b})-b\Lambda_{1}(\lambda_{\mathbf{m},b},b)
		\end{pmatrix}\overline{w}^{\mathbf{m}-1}.
	\end{array}$$
\textbf{(iii)} First remark that the point $(ii)$ together with Proposition \ref{prop-fredholmness} imply that $R\Big(DG_b(\lambda_{\mathbf{m},b},0,0)\Big)$ is closed and of codimension one in $Y_{\mathbf{m}}^{\alpha}.$ Now consider $f\in R\Big(DG_b(\lambda_{\mathbf{m},b},0,0)\Big).$ There exists $h$ as in \eqref{sym h1 h2 b} such that
$$f=DG_b(\lambda_{\mathbf{m},b},0,0)[h]=\sum_{n=1}^{\infty}n\mathbf{m}M_{n\mathbf{m}}(\lambda_{\mathbf{m},b},b)\begin{pmatrix}
	a_n\\
	b_n
\end{pmatrix}e_{n\mathbf{m}}.$$
Now taking the scalar product \eqref{scalar product} of $f$ with $y_{\mathbf{m},b}$ yields
\begin{align*}
	\big\langle f,y_{\mathbf{m},b}\big\rangle&=\left\langle M_{\mathbf{m}}(\lambda_{\mathbf{m},b},b)\begin{pmatrix}
		h_{1,1}\\
		h_{2,1}
	\end{pmatrix},\begin{pmatrix}
        \Lambda_{\mathbf{m}}(\lambda_{\mathbf{m},b},b)\\
			\Omega_{\mathbf{m}}(\lambda_{\mathbf{m},b})-b\Lambda_{1}(\lambda_{\mathbf{m},b},b)
		\end{pmatrix}\right\rangle_{\mathbb{R}^2}\\
	&=\left\langle\begin{pmatrix}
		h_{1,1}\\
		h_{2,1}
	\end{pmatrix},M_{\mathbf{m}}^{\top}(\lambda_{\mathbf{m},b},b)\begin{pmatrix}
        \Lambda_{\mathbf{m}}(\lambda_{\mathbf{m},b},b)\\
			\Omega_{\mathbf{m}}(\lambda_{\mathbf{m},b})-b\Lambda_{1}(\lambda_{\mathbf{m},b},b)
		\end{pmatrix}\right\rangle_{\mathbb{R}^2}\\
	&=0,
\end{align*}
because by construction 
$$\begin{pmatrix}
        \Lambda_{\mathbf{m}}(\lambda_{\mathbf{m},b},b)\\
			\Omega_{\mathbf{m}}(\lambda_{\mathbf{m},b})-b\Lambda_{1}(\lambda_{\mathbf{m},b},b)
		\end{pmatrix}\in\ker\big(M_{\mathbf{m}}^{\top}(\lambda_{\mathbf{m},b},b)\big).$$
This proves
$$R\Big(DG_b(\lambda_{\mathbf{m},b},0,0)\Big)\subset\mathtt{span}(y_{\mathbf{m},b})^{\perp}.$$
The converse inclusion is obtained by the codimension condition, making appeal to \cite[Lem. B.1]{R25}.\\
$(iv)$ From \eqref{DGb}, one has
$$\partial_{\lambda}DG_b(\lambda,0,0)[h_{1},h_{2}]=\sum_{n=1}^{\infty}n\mathbf{m}\partial_{\lambda}M_{n\mathbf{m}}(\lambda,b)\begin{pmatrix}
	h_{1,n}\\
	h_{2,n}
\end{pmatrix}e_{n\mathbf{m}}.$$
In view of \eqref{scalar product}, \eqref{genker:lbd} and \eqref{genim:lbd}, the transversality condition \eqref{tras:lbd} is equivalent to 
\begin{equation}\label{def:Tmb}
    T_{\mathbf{m},b}\triangleq\mathbf{m}\left\langle\partial_{\lambda}M_{\mathbf{m}}(\lambda_{\mathbf{m},b},b)\begin{pmatrix}
        -b\Lambda_{\mathbf{m}}(\lambda_{\mathbf{m},b},b)\\
			\Omega_{\mathbf{m}}(\lambda_{\mathbf{m},b})-b\Lambda_{1}(\lambda_{\mathbf{m},b},b)
		\end{pmatrix},\begin{pmatrix}
        \Lambda_{\mathbf{m}}(\lambda_{\mathbf{m},b},b)\\
			\Omega_{\mathbf{m}}(\lambda_{\mathbf{m},b})-b\Lambda_{1}(\lambda_{\mathbf{m},b},b)
		\end{pmatrix}\right\rangle_{\mathbb{R}^2}\neq0.
\end{equation}
Differentiating \eqref{matrix Mn}, we infer
$$\partial_{\lambda}M_n(\lambda,b)=\begin{pmatrix}
    \partial_{\lambda}\big(\Omega_{n}(\lambda)-b\Lambda_1(\lambda,b)\big) & \partial_{\lambda}\big(b\Lambda_n(\lambda,b)\big)\\
    -\partial_{\lambda}\big(\Lambda_n(\lambda,b)\big) & \partial_{\lambda}\Big(\Lambda_1(\lambda,b)-b\Omega_n(\lambda b)\Big)
\end{pmatrix}.$$
We study each term separately. Differentiating \eqref{svi0:An}, we find
\begin{equation*}
    \partial_{\lambda}\big(\Omega_{n}(\lambda)-b\Lambda_1(\lambda,b)\big)\underset{\lambda\to0}{=}\frac{1-b^2}{2}\lambda\log(\lambda)+O(\lambda).
\end{equation*}
Combining this with \eqref{asy_lambda}, we obtain
\begin{equation*}
    \partial_{\lambda}\big(\Omega_{\mathbf{m}}(\lambda)-b\Lambda_1(\lambda,b)\big)|_{\lambda=\lambda_{\mathbf{m},b}}\underset{\mathbf{m}\to\infty}{=}-\sqrt{1-b^2}\sqrt{\frac{\log(\mathbf{m})}{\mathbf{m}}}+O\left(\frac{\log\big(\log(\mathbf{m})\big)}{\sqrt{\mathbf{m}\log(\mathbf{m})}}\right).
\end{equation*}
Differentiating \eqref{svi0:Bn}, we find
\begin{equation*}
    \partial_{\lambda}\big(\Lambda_1(\lambda,b)-b\Omega_{n}(\lambda b)\big)\underset{\lambda\to0}{=}\frac{b(1-b^2)}{2}\lambda\log(\lambda)+O(\lambda).
\end{equation*}
Combining this with \eqref{asy_lambda}, we get
\begin{equation*}
    \partial_{\lambda}\big(\Lambda_1(\lambda,b)-b\Omega_{\mathbf{m}}(\lambda b)\big)|_{\lambda=\lambda_{\mathbf{m},b}}\underset{\mathbf{m}\to\infty}{=}-b\sqrt{1-b^2}\sqrt{\frac{\log(\mathbf{m})}{\mathbf{m}}}+O\left(\frac{\log\big(\log(\mathbf{m})\big)}{\sqrt{\mathbf{m}\log(\mathbf{m})}}\right).
\end{equation*}
Besides, from \eqref{svi0:Lbdn}, we infer
\begin{align*}
    \partial_{\lambda}\big(\Lambda_{n}(\lambda,b)\big)\underset{\lambda\to0}{=}O\left(b^n\right).
\end{align*}
Therefore, in view of \eqref{asy_lambda}, we get
$$\partial_{\lambda}\big(\Lambda_{\mathbf{m}}(\lambda,b)\big)|_{\lambda=\lambda_{\mathbf{m},b}}\underset{\mathbf{m}\to\infty}{=}O\left(\frac{\log\big(\log(\mathbf{m})\big)}{\sqrt{\mathbf{m}\log(\mathbf{m})}}\right).$$
Putting together the foregoing calculations yields
$$\partial_{\lambda}M_{\mathbf{m}}(\lambda_{\mathbf{m},b},b)\underset{\mathbf{m}\to\infty}{=}-\sqrt{1-b^2}\sqrt{\frac{\log(\mathbf{m})}{\mathbf{m}}}\begin{pmatrix}
     1 & 0\\
     0 & b
\end{pmatrix}+O\left(\frac{\log\big(\log(\mathbf{m})\big)}{\sqrt{\mathbf{m}\log(\mathbf{m})}}\right).$$
Therefore, the transversality admits the asymptotic
\begin{align*}
    T_{\mathbf{m},b}&\underset{\mathbf{m}\to\infty}{=}-\frac{(1-b^2)^{\frac{5}{2}}}{4}\sqrt{\mathbf{m}\log(\mathbf{m})}\left\langle\begin{pmatrix}
     1 & 0\\
     0 & b
\end{pmatrix}\begin{pmatrix}
    0\\
    1
\end{pmatrix},\begin{pmatrix}
    0\\
    1
\end{pmatrix}\right\rangle_{\mathbb{R}^2}+O\left(\frac{\sqrt{\mathbf{m}}\log\big(\log(\mathbf{m})\big)}{\sqrt{\log(\mathbf{m})}}\right).\\
&\underset{\mathbf{m}\to\infty}{=}-b\frac{(1-b^2)^{\frac{5}{2}}}{4}\sqrt{\mathbf{m}\log(\mathbf{m})}+O\left(\frac{\sqrt{\mathbf{m}}\log\big(\log(\mathbf{m})\big)}{\sqrt{\log(\mathbf{m})}}\right).
\end{align*}
Since $b\in(0,1)$, the previous asymptotic implies
$$T_{\mathbf{m},b}\underset{\mathbf{m}\to\infty}{\longrightarrow}-\infty.$$
In particular, \eqref{def:Tmb} holds. This concludes the proof of Proposition \ref{prop:CRlbd}.
\end{proof}

\section{Radial symmetry for simply-connected V-states} \label{section_rigidity}
In this section, we study necessary radial symmetry properties of simply-connected V-states, thus proving Theorem \ref{thm sym Vstates QGSW}. We set 
$$\mathcal{K}_{\lambda}(x)\triangleq-\frac{1}{2\pi}K_0(\lambda|x|).$$
We start with the following observation : A simply-connected bounded domain $D_0\subset\mathbb{R}^2$ generates a V-state with angular velocity $\Omega\in\mathbb{R}$ for $(QGSW)_{\lambda}$ equations if and only if there exists $C\in\mathbb{R}$ such that
\begin{equation}\label{sta eq}
	\forall x\in\partial D_0,\quad\mathbf{1}_{D_0}\ast \mathcal{K}_{\lambda}(x)-\frac{\Omega}{2}|x|^2=C.
\end{equation}
The equation \eqref{sta eq} is obtained by integrating the contour dynamics equation \eqref{CDE} over the arclength. For more details, we refer to \cite{HMV13}.

\begin{proof}[Proof of Theorem \ref{thm sym Vstates QGSW}]
	$(i)$ Follows directly from \cite[Thm. 4.2]{GPSY20}. Indeed,
	if we denote
	$$g(r)\triangleq-\frac{1}{2\pi}K_0(\lambda r)$$
	then $g(|x|)=\mathcal{K}_{\lambda}(x)$ and the formulae \eqref{Bessel derivatives}, \eqref{symmetry Bessel} and \eqref{e-K1} imply 
	$$0<g'(r)=\frac{\lambda}{2\pi}K_1(\lambda r)\leqslant\frac{1}{2\pi r}\cdot$$
	$(ii)$ The proof is inspired from \cite[Thm. 5.1]{GPSY20}. There exists $x_0\in\partial D_{0}$ such that $R=|x_0|.$ Assume by contradiction that $D_0\neq R\cdot\mathbb{D}.$ We have $D_0\subset R\cdot\mathbb{D}$ and we set $U\triangleq (R\cdot\mathbb{D})\setminus D_0.$ We have
	\begin{equation}\label{1UKlbd}
		\forall x\in\partial D_0,\quad\mathbf{1}_{U}\ast\mathcal{K}_{\lambda}(x)=\mathbf{1}_{R\cdot\mathbb{D}}\ast\mathcal{K}_{\lambda}(x)-\frac{\Omega}{2}|x|^2-C.
	\end{equation}
	$\blacktriangleright$ On one hand, for $x\in D_0$, we have
	\begin{align*}
		\nabla\big(\mathbf{1}_U\ast\mathcal{K}_{\lambda}\big)(x_0)\cdot x_0&=\frac{\lambda}{2\pi}\int_{U}\frac{K_{1}(\lambda|x_0-y|)}{|x_0-y|}(x_0-y)\cdot x_0\, dy.
	\end{align*}
For any $y\in U\subset R\cdot\mathbb{D}$, Cauchy-Schwarz inequality implies
$$(x_0-y)\cdot x_0=|x_0|^2-y\cdot x_0\geqslant R(R-|y|)>0.$$
Using \eqref{symmetry Bessel} one deduces that
\begin{equation}\label{plus nbl1K}
	\nabla\big(\mathbf{1}_U\ast\mathcal{K}_{\lambda}\big)(x_0)\cdot x_0>0.
\end{equation}
$\blacktriangleright$ On the other hand, straightforward computations based on \eqref{Bessel derivatives} yield, for all $i\in\{1,2\}$,
$$\partial_{x_i}^2\big(\mathbf{1}_{U}\ast\mathcal{K}_{\lambda}\big)(x)=\frac{\lambda}{2\pi}\left[\int_{U}\frac{K_1(\lambda|x-y|)}{|x-y|}\,dy+\lambda\int_{U}\frac{K_1'(\lambda|x-y|)(x_i-y_i)^2}{|x-y|^2}\,dy-\int_{U}\frac{K_1(\lambda|x-y|)(x_i-y_i)^2}{|x-y|^3}\,dy\right].$$
Therefore,
\begin{align*}
	\Delta\big(\mathbf{1}_{U}\ast\mathcal{K}_{\lambda}\big)(x)&=\left(\partial_{x_1}^2+\partial_{x_2}^2\right)\big(\mathbf{1}_{U}\ast\mathcal{K}_{\lambda}\big)(x)\\
	&=\frac{\lambda}{2\pi}\left[\int_{U}\frac{K_1(\lambda|x-y|)}{|x-y|} \, dy+\lambda\int_{U}K_1'(\lambda|x-y|)\, dy\right].
\end{align*}
Using \eqref{useful:diff} and the positivity of $K_0$, we get
\begin{equation}\label{Lap neg}
	\forall x\in D_0,\quad\Delta\big(\mathbf{1}_{U}\ast\mathcal{K}_{\lambda}\big)(x)=-\frac{\lambda^2}{2\pi}\int_{U}|x-y|K_0(\lambda|x-y|)\,dy<0.
\end{equation} 
Fix $x=\widetilde{r}e^{\ii\eta}\in R\cdot\mathbb{D}.$ Then, by polar change of variable, we infer
\begin{align*}
	\big(\mathbf{1}_{R\cdot\mathbb{D}}\ast\mathcal{K}_{\lambda}\big)(x)&=-\frac{1}{2\pi}\int_{R\cdot\mathbb{D}}K_0(\lambda|x-y|)\,dy\\
	&=-\frac{1}{2\pi}\int_{\widetilde{r}\cdot\mathbb{D}}K_0(\lambda|x-y|)\,dy-\frac{1}{2\pi}\int_{(R\cdot\mathbb{D})\setminus (\widetilde{r}\cdot\mathbb{D})}K_0(\lambda|x-y|)\,dy\\
	&=-\frac{1}{2\pi}\int_{0}^{\widetilde{r}}\int_{0}^{2\pi}K_0\big(\lambda\big|\widetilde{r}e^{\ii\eta}-re^{\ii\theta}\big|\big)\,r\,dr\,d\theta-\frac{1}{2\pi}\int_{\widetilde{r}}^{R}\int_{0}^{2\pi}K_0\big(\lambda\big|\widetilde{r}e^{\ii\eta}-re^{\ii\theta}\big|\big)\,r\,dr\,d\theta.
\end{align*}
Now, by using \eqref{Beltrami's summation formula} and \eqref{Bessel and anti-derivatives}, we obtain
\begin{align*}
	\big(\mathbf{1}_{R\cdot\mathbb{D}}\ast\mathcal{K}_{\lambda}\big)(x)&=-K_0(\lambda\widetilde{r})\int_{0}^{\widetilde{r}}rI_0(\lambda r)\,dr-I_0(\lambda\widetilde{r})\int_{\widetilde{r}}^{R}rK_0(\lambda r)\,dr\\
	&=-\frac{K_0(\lambda\widetilde{r})}{\lambda^2}\int_{0}^{\lambda\widetilde{r}}uI_0(u)\,du-\frac{I_0(\lambda\widetilde{r})}{\lambda^2}\int_{\lambda\widetilde{r}}^{\lambda R}uK_0(u)\,du\\
	&=-\frac{K_0(\lambda\widetilde{r})}{\lambda^2}\big[uI_1(u)\big]_{0}^{\lambda\widetilde{r}}-\frac{I_0(\lambda\widetilde{r})}{\lambda^2}\big[-uK_1(u)\big]_{\lambda\widetilde{r}}^{\lambda R}\\
	&=\frac{1}{\lambda}\Big[RK_1(\lambda R)I_0(\lambda\widetilde{r})-\widetilde{r}\Big(K_0(\lambda\widetilde{r})I_1(\lambda\widetilde{r})+I_0(\lambda\widetilde{r})K_1(\lambda\widetilde{r})\Big)\Big].
\end{align*}
The identity \eqref{wronskian} finally gives
$$\big(\mathbf{1}_{R\cdot\mathbb{D}}\ast\mathcal{K}_{\lambda}\big)(x)=\frac{R}{\lambda}K_1(\lambda R)I_0(\lambda|x|)-\frac{1}{\lambda^2}\cdot$$
For $\Omega\geqslant I_{1}(\lambda R)K_1(\lambda R)$, we get
\begin{align*}
	\frac{d}{d|x|}\Big(\mathbf{1}_{R\cdot\mathbb{D}}\ast\mathcal{K}_{\lambda}-\frac{\Omega}{2}|x|^2\Big)&=RK_{1}(\lambda R)I_{1}(\lambda|x|)-\Omega|x|\\
	&\leqslant R\lambda|x|K_1(\lambda R)\Big(\frac{I_{1}(\lambda|x|)}{\lambda|x|}-\frac{I_{1}(\lambda R)}{\lambda R}\Big).
\end{align*}
Now, we easily deduce from \eqref{definition of modified Bessel function of first kind} that the function $x\mapsto\frac{I_1(x)}{x}$ is increasing on $(0,\infty).$ Therefore, the function $x\mapsto \mathbf{1}_{R\cdot\mathbb{D}}\ast\mathcal{K}_{\lambda}(x)-\frac{\Omega}{2}|x|^2$ is non-increasing with respect to $|x|$ on $(0,R]$ provided that $\Omega\geqslant I_{1}(\lambda R)K_1(\lambda R).$ Consequently, coming back to \eqref{1UKlbd}, we see that the minimum of $\mathbf{1}_U\ast\mathcal{K}_{\lambda}$ on $\partial D_0$ is achieved at $x_0.$ Putting together this remark with \eqref{Lap neg} leads to apply the maximum principle to $\mathbf{1}_U\ast\mathcal{K}_{\lambda}.$  This implies that the minimum of $\mathbf{1}_U\ast\mathcal{K}_{\lambda}$ in $\overline{D_0}$ is also achieved at $x_0$ and therefore
\begin{equation}\label{minus nbl1K}
	\nabla\big(\mathbf{1}_U\ast\mathcal{K}_{\lambda}\big)(x_0)\cdot x_0\leqslant 0.
\end{equation}
Combining \eqref{plus nbl1K} and \eqref{minus nbl1K} leads to a contradiction. This achieves the proof of Theorem \ref{thm sym Vstates QGSW}.
\end{proof}

The next result provides a quantification on the V-states deformation with respect to the unit disc.
\begin{cor}\label{cor mea diffsym}
	Let $\lambda>0.$ Let $D_0$ be a simply-connected bounded domain of class $C^1$ in $\mathbb{R}^2$ with area $\pi$ and generating a V-state with angular velocity $\Omega\in\big(0,I_1(\lambda)K_{1}(\lambda)\big).$ Then, the symmetric difference $D_0\,\Delta\, \mathbb{D}$ satisfies the following estimate
	$$\big|D_0\,\Delta\, \mathbb{D}\big|\leqslant 2\pi\Big(\big(f_\lambda^{-1}(\Omega)\big)^2-1\Big),$$
	where $f_\lambda^{-1}$ denotes the inverse function of $f_{\lambda} : x\in(0,\infty)\mapsto I_1(\lambda x)K_1(\lambda x)\in(0,\tfrac{1}{2}).$ Note that the right hand-side tends to $0$ when $\Omega\nearrow I_1(\lambda)K_1(\lambda).$
\end{cor}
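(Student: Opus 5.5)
We will reduce the estimate to the disc rigidity of Theorem \ref{thm sym Vstates QGSW}-$(ii)$, applied in contrapositive form. Set $R\triangleq\max_{x\in\overline{D_0}}|x|$. First we record that $f_\lambda:x\mapsto I_1(\lambda x)K_1(\lambda x)$ is a continuous, strictly decreasing bijection from $(0,\infty)$ onto $(0,\tfrac12)$. This follows from the strict monotonicity of $x\mapsto P_1(x)=I_1(x)K_1(x)$ recalled in Appendix \ref{appendix Bessel}, together with the limits $P_1(0^+)=\tfrac12$ and $P_1(\infty)=0$, which come from \eqref{asymptotic expansion of small argument} and \eqref{asymp large z}. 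Since $0<\Omega<I_1(\lambda)K_1(\lambda)=f_\lambda(1)<\tfrac12$, the number $\rho\triangleq f_\lambda^{-1}(\Omega)$ is well defined and satisfies $\rho>1$. Moreover, $\rho\to1^+$ as $\Omega\nearrow I_1(\lambda)K_1(\lambda)$, by continuity of $f_\lambda^{-1}$. This already gives the last sentence of the statement.

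Next we bound $R$. Suppose that $\Omega\geqslant I_1(\lambda R)K_1(\lambda R)=f_\lambda(R)$. Theorem \ref{thm sym Vstates QGSW}-$(ii)$ then gives $D_0=R\cdot\mathbb{D}$, and the area constraint forces $R=1$. But then $\Omega\geqslant f_\lambda(1)$, which contradicts the hypothesis. Hence $\Omega<f_\lambda(R)$, and since $f_\lambda$ is decreasing this means $R<f_\lambda^{-1}(\Omega)=\rho$. In other words, $D_0\subset R\cdot\mathbb{D}\subset\rho\cdot\mathbb{D}$.

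It remains to estimate the symmetric difference, using only area bookkeeping. Because $|D_0|=|\mathbb{D}|=\pi$, we have $|D_0\setminus\mathbb{D}|=|\mathbb{D}\setminus D_0|$, so
$$|D_0\,\Delta\,\mathbb{D}|=2|D_0\setminus\mathbb{D}|.$$
Since $D_0\subset\rho\cdot\mathbb{D}$, the set $D_0\setminus\mathbb{D}$ is contained in the annulus $(\rho\cdot\mathbb{D})\setminus\mathbb{D}$, whose area is $\pi(\rho^2-1)$. Therefore
$$|D_0\,\Delta\,\mathbb{D}|\leqslant2\pi(\rho^2-1)=2\pi\Big(\big(f_\lambda^{-1}(\Omega)\big)^2-1\Big).$$
One small point needs checking here: the boundary of $D_0$ and the circle $\mathbb{T}$ have zero area, so open versus closed sets play no role in these measure identities.

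The main point to watch is the monotonicity of $x\mapsto I_1(x)K_1(x)$, which is used twice: once to define $f_\lambda^{-1}$ and once to turn $\Omega<f_\lambda(R)$ into $R<\rho$. We take this monotonicity from the appendix, where it is stated that $(n,x)\mapsto P_n(x)$ is strictly decreasing in each variable. The rest is a direct application of Theorem \ref{thm sym Vstates QGSW}-$(ii)$.
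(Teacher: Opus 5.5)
Your proposal is correct and follows essentially the same route as the paper. You bound $R=\max_{\overline{D_0}}|x|$ by $f_\lambda^{-1}(\Omega)$ using Theorem \ref{thm sym Vstates QGSW}-$(ii)$, and then use $|D_0\,\Delta\,\mathbb{D}|=2|D_0\setminus\mathbb{D}|\leqslant 2\pi\big((f_\lambda^{-1}(\Omega))^2-1\big)$. Your contrapositive formulation (if $\Omega\geqslant f_\lambda(R)$ then $D_0=R\cdot\mathbb{D}$, so $R=1$ by the area constraint, contradicting $\Omega<f_\lambda(1)$) is slightly tidier than the paper's ``non-trivial V-state'' phrasing, because it also covers the case where $D_0$ is a disc.
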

\begin{proof}
	Recall from Appendix \ref{appendix Bessel} that the function $f_{\lambda}$ is strictly decreasing on $(0,\infty)$ with values in $(0,\tfrac{1}{2}).$ Therefore, it is invertible and its inverse $f_{\lambda}^{-1}$ is also strictly decreasing. We denote $R\triangleq \displaystyle\max_{x\in D_0}|x|.$ If $D_0$ generates a non-trivial V-state with angular velocity $\Omega,$ then Theorem \ref{thm sym Vstates QGSW} implies 
	$$\Omega\leqslant I_{1}(\lambda R)K_1(\lambda R),\qquad\textnormal{i.e.}\qquad R\leqslant f_{\lambda}^{-1}(\Omega).$$
	Consequently,
	$$D_0\subset f_\lambda^{-1}(\Omega)\cdot\mathbb{D}.$$
	Hence,
	$$\big|D_0\,\Delta\, \mathbb{D}\big|= 2\big|D_0\setminus \mathbb{D}\big|\leqslant 2\Big|\Big(f_\lambda^{-1}(\Omega)\cdot\mathbb{D}\Big)\setminus \mathbb{D}\Big|\leqslant 2\pi\Big(\big(f_\lambda^{-1}(\Omega)\big)^2-1\Big).$$
	This ends the proof of Corollary \ref{cor mea diffsym}.
\end{proof}
Finally, we conclude this section by mentioning a rescaling result for simply-connected QGSW V-states.
\begin{lem}\label{lem:dilation}
    Let $\alpha,\lambda>0$. Let $D_0\subset\mathbb{R}^2$ be a simply-connected bounded set. Then, the following statements are equivalent
    \begin{enumerate}[label=(\roman*)]
        \item $D_0$ generates a V-state with angular velocity $\Omega$ for $(QGSW)_{\lambda};$
        \item $\alpha\cdot D_0$ generates a V-state with angular velocity $\Omega$ for $(QGSW)_{\frac{\lambda}{\alpha}}$.
    \end{enumerate}
\end{lem}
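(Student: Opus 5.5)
The plan is to work directly with the integrated boundary characterization \eqref{sta eq}. A simply-connected bounded domain $D_0$ generates a V-state with angular velocity $\Omega$ for $(QGSW)_{\lambda}$ if and only if there is a constant $C\in\mathbb{R}$ with
$$\forall x\in\partial D_0,\quad \mathbf{1}_{D_0}\ast\mathcal{K}_{\lambda}(x)-\frac{\Omega}{2}|x|^2=C.$$
We shall transport this identity under the dilation $x\mapsto\alpha x$ and track the effect on each term. Since the equivalence in \eqref{sta eq} is stated for every $\lambda>0$, proving that the identity for $(D_0,\lambda)$ is equivalent to the identity for $(\alpha\cdot D_0,\frac{\lambda}{\alpha})$ yields the lemma. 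The implication (ii)$\Rightarrow$(i) is the same argument applied with $\alpha^{-1}$, or can be read off from the fact that every step below is an equivalence.

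\textbf{Key computation.} Fix $x=\alpha x'$ with $x'\in\partial D_0$; note that $\partial(\alpha\cdot D_0)=\alpha\cdot\partial D_0$. With the change of variables $y=\alpha y'$, so that $dy=\alpha^2dy'$, we get
$$\mathbf{1}_{\alpha D_0}\ast\mathcal{K}_{\lambda/\alpha}(\alpha x')=-\frac{1}{2\pi}\int_{\alpha D_0}K_0\Big(\tfrac{\lambda}{\alpha}|\alpha x'-y|\Big)dy=-\frac{\alpha^2}{2\pi}\int_{D_0}K_0(\lambda|x'-y'|)\,dy'=\alpha^2\,\mathbf{1}_{D_0}\ast\mathcal{K}_{\lambda}(x').$$
The centrifugal term transforms in the same way: $\frac{\Omega}{2}|\alpha x'|^2=\alpha^2\frac{\Omega}{2}|x'|^2$.

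\textbf{Conclusion of the argument.} Both terms scale by the common factor $\alpha^2$. Hence the boundary relation for $(\alpha\cdot D_0,\frac{\lambda}{\alpha},\Omega)$ holds with constant $\alpha^2C$ exactly when the relation for $(D_0,\lambda,\Omega)$ holds with constant $C$. The angular velocity is unchanged, which is the content of the statement.

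\textbf{Hypotheses and main obstacle.} We also need to check that the standing hypotheses are preserved: simple connectivity, boundedness and boundary regularity are all invariant under dilation. The only point needing care is the justification of \eqref{sta eq} itself, which is taken from the earlier discussion. Since that is an input rather than something to prove here, there is no real obstacle. The one step to keep straight is matching the kernel scaling, which works precisely because $K_0$ depends only on $\lambda|x-y|$ and its argument is invariant under $(\lambda,x,y)\mapsto(\lambda/\alpha,\alpha x,\alpha y)$.
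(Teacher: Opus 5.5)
Your proposal is correct and matches the paper's proof: you change variables $y=\alpha y'$ in the boundary identity \eqref{sta eq} and note that both the potential and the centrifugal term pick up the factor $\alpha^2$, so the constant becomes $\alpha^2C$ while $\Omega$ is unchanged. You then obtain the converse implication by the inverse dilation, exactly as the paper does.
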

\begin{proof} It is sufficient to prove one implication, the converse one being obtained by the converse dilation. Fix $x=\alpha x'\in\partial(\alpha\cdot D_0)$ with $x'\in \partial D_0.$ By change of variable, one gets
\begin{align*}
	\mathbf{1}_{\alpha\cdot D_0}\ast \mathcal{K}_{\frac{\lambda}{\alpha}}(x)&=-\frac{1}{2\pi}\int_{\alpha\cdot D_0}K_{0}\big(\tfrac{\lambda}{\alpha}|x-y|\big)\,dy\\
	&=-\frac{\alpha^2}{2\pi}\int_{D_0}K_{0}\big(\lambda|x'-y'|\big)\,dy'\\
	&=\alpha^2\mathbf{1}_{D_0}\ast \mathcal{K}_{\lambda}(x').
\end{align*}
Therefore, one deduces from \eqref{sta eq} that
$$\mathbf{1}_{\alpha\cdot D_0}\ast \mathcal{K}_{\frac{\lambda}{\alpha}}(x)-\frac{\Omega}{2}|x|^2=\alpha^2\Big(\mathbf{1}_{D_0}\ast \mathcal{K}_{\lambda}(x')-\frac{\Omega}{2}|x'|^2\Big)=\alpha^2C.$$
This ends the proof of Lemma \ref{lem:dilation}.
\end{proof}
\appendix
\section{Formular on modified Bessel functions}\label{appendix Bessel}
We put here all the information needed about modified Bessel functions. The starting references are \cite{AS64,W95}. We first define the Bessel functions of order $\nu\in\mathbb{C}$ by 
$$J_{\nu}(z)=\sum_{m=0}^{+\infty}\frac{(-1)^{m}\left(\frac{z}{2}\right)^{\nu +2m}}{m!\Gamma(\nu+m+1)},\mbox{ }\quad|\mbox{arg}(z)|<\pi.$$
Notice that when $\nu\in\mathbb{N}$ we have the integral representation (see \cite[p. 115]{Leb65})
\begin{equation*}
	J_\nu(x)=\frac{1}{\pi}\int_0^\pi\cos\big(x \sin \theta-\nu \theta\big) d\theta.
\end{equation*}
Then, we define Bessel functions of imaginary argument by
\begin{equation}\label{definition of modified Bessel function of first kind}
	I_{\nu}(z)=\sum_{m=0}^{+\infty}\frac{\left(\frac{z}{2}\right)^{\nu+2m}}{m!\Gamma(\nu+m+1)},\mbox{ }\quad|\mbox{arg}(z)|<\pi
\end{equation}
and $$K_{\nu}(z)=\frac{\pi}{2}\cdot\frac{I_{-\nu}(z)-I_{\nu}(z)}{\sin(\nu\pi)},\mbox{ }\quad\nu\in\mathbb{C}\backslash\mathbb{Z},\mbox{ }\quad|\mbox{arg}(z)|<\pi.$$
For $n\in\mathbb{Z},$ we define $K_{n}(z)=\displaystyle\lim_{\nu\rightarrow n}K_{\nu}(z).$
We give now useful properties of modified Bessel functions.\\ 
\textbf{Symmetry and positivity properties} (see \cite[p. 375]{AS64}) \textbf{:}
\begin{equation}\label{symmetry Bessel}
	\forall n\in\mathbb{N},\quad\forall\lambda\in(0,\infty),\quad I_{-n}(\lambda)=I_{n}(\lambda)\in(0,\infty)\quad\mbox{ and }\quad K_{-n}(\lambda)=K_{n}(\lambda)\in(0,\infty).
\end{equation}
\textbf{Derivatives and Anti-derivatives} (see \cite[p. 376]{AS64}) \textbf{:}\\
If we set $\mathcal{Z}_{\nu}(z)=I_{\nu}(z)$ or $e^{i\nu\pi}K_{\nu}(z)$, then, for all $\nu\in\mathbb{R}$, we have 
\begin{equation}\label{Bessel derivatives}
	\mathcal{Z}_{\nu}'(z)=\mathcal{Z}_{\nu-1}(z)-\frac{\nu}{z}\mathcal{Z}_{\nu}(z)=\mathcal{Z}_{\nu+1}(z)+\frac{\nu}{z}\mathcal{Z}_{\nu}(z),
\end{equation}
for any $k\in\mathbb{N},$
\begin{equation}\label{deriv:iterate}
    \mathcal{Z}_{\nu}^{(k)}=\frac{1}{2^k}\sum_{p=0}^{k}\binom{k}{p}\mathcal{Z}_{\nu-k+2p}
\end{equation}
and
\begin{equation}\label{Bessel and anti-derivatives}
	\int z^{\nu+1}\mathcal{Z}_{\nu}(z)\,dz=z^{\nu+1}\mathcal{Z}_{\nu+1}(z).
\end{equation}
\textbf{Power series extension for $K_{n}$} (see \cite[p. 375]{AS64}) \textbf{:}\\
\begin{equation}\label{power:Kn}
    \begin{aligned}
	K_{n}(z)=&\frac{1}{2}\left(\frac{z}{2}\right)^{-n}\sum_{k=0}^{n-1}\frac{(n-k-1)!}{k!}\left(\frac{-z^2}{4}\right)^{k}+(-1)^{n+1}\ln\left(\frac{z}{2}\right)I_{n}(z)\\
	&+\frac{1}{2}\left(\frac{-z}{2}\right)^{n}\sum_{k=0}^{+\infty}\left(\psi(k+1)+\psi(n+k+1)\right)\frac{\left(\frac{z^{2}}{4}\right)^{k}}{k!(n+k)!},
\end{aligned}
\end{equation}
where 
$$\psi(1)=-\boldsymbol{\gamma} \textnormal{ (Euler's constant)}\quad \textnormal{and}\quad  \forall m\in\mathbb{N}^{*},\,\,\psi(m+1)=\displaystyle\sum_{k=1}^{m}\frac{1}{k}-\boldsymbol{\gamma}.$$
In particular, 
\begin{equation*}
	K_{0}(z)=-\log\left(\frac{z}{2}\right)I_{0}(z)+\sum_{m=0}^{+\infty}\frac{\left(\frac{z}{2}\right)^{2m}}{(m!)^{2}}\psi(m+1).
\end{equation*}
So $K_{0}$ behaves like a logarithm at $0.$\\

\noindent\textbf{Decay property for the product $I_{\nu}K_{\nu}$} (see \cite{B09} and \cite{DHR19}) \textbf{:}\\ The application $(\lambda,\nu)\mapsto I_{\nu}(\lambda)K_{\nu}(\lambda)$ is strictly decreasing in each variable $(\lambda,\nu)\in(0,\infty) \times (0, \infty).$\\

\noindent\textbf{Wronskian :}
\begin{equation}\label{wronskian}
	I_{\nu}'(z)K_{\nu}(z)-I_{\nu}(z)K_{\nu}'(z)=I_{\nu}(z)K_{\nu+1}(z)+I_{\nu+1}(z)K_{\nu}(z)=\frac{1}{z}\cdot
\end{equation}

\noindent\textbf{Beltrami's summation formula} (see \cite[p. 361]{W95}) \textbf{:}
Let $0<b<a.$ Then
\begin{equation}\label{Beltrami's summation formula}
	\forall \theta\in\mathbb{R},\quad K_{0}\left(\sqrt{a^{2}+b^{2}-2ab\cos(\theta)}\right)=\sum_{m=-\infty}^{\infty}I_{m}(b)K_{m}(a)\cos(m\theta).
\end{equation}
\textbf{Asymptotic expansion of small argument} (see \cite[p. 375]{AS64}) \textbf{:}
\begin{equation}\label{asymptotic expansion of small argument}
	\forall n\in\mathbb{N}^{*},\quad I_{n}(\lambda)\underset{\lambda\rightarrow 0}{\sim}\frac{\left(\frac{1}{2}\lambda\right)^{n}}{\Gamma(n+1)}\quad\mbox{ and }\quad K_{n}(\lambda)\underset{\lambda\rightarrow 0}{\sim}\frac{\Gamma(n)}{2\left(\frac{1}{2}\lambda\right)^{n}}\cdot
\end{equation} 
\textbf{Asymptotic expansion of large argument} (see \cite[p. 377-378]{AS64}) \textbf{:} 
\begin{equation}\label{asymp large z}
	\forall n\in\mathbb{N}^{*},\quad I_{n}(\lambda)\underset{\lambda\rightarrow +\infty}{\sim}\frac{e^{\lambda}}{\sqrt{2\pi \lambda}}\quad\mbox{ and }\quad K_{n}(\lambda)\underset{\lambda\rightarrow +\infty}{\sim}\sqrt{\frac{\pi}{2\lambda}}e^{-\lambda}.
\end{equation} 
\textbf{Asymptotic expansion of high order} (see \cite{HS11}) \textbf{:} For every $n\in\mathbb{N}^*$,
    \begin{equation}\label{asymp:large-order-In}
        I_n(\lambda) \underset{n \to \infty}{\sim} \frac{\left(\frac{1}{2} \lambda\right)^n}{\Gamma(n+1)} \sum_{m = 0}^{+ \infty} \frac{b_m(\lambda)}{n^m}, 
    \end{equation}
    and
    \begin{equation}\label{asymp:large-order-Kn}
        K_n(\lambda) \underset{n \to \infty}{\sim} \frac{1}{2} \frac{\Gamma(n)}{\left(\frac{1}{2} \lambda \right)^n} \sum_{m = 0}^{+ \infty} (-1)^m \frac{b_m(\lambda)}{n^m},
    \end{equation}
where for each $m\in\mathbb{N}$, $b_{m}(\lambda)$ is a polynomial of degree $m$ in $\lambda^{2}$ defined by 
$$b_{0}(\lambda)=1\quad\mbox{ and }\quad\forall m\in\mathbb{N}^{*},\,\,b_{m}(\lambda)=\sum_{k=1}^{m}(-1)^{m-k}\frac{S(m,k)}{k!}\left(\frac{\lambda^{2}}{4}\right)^{k}$$
and the $S(m,k)$ are Stirling numbers of second kind defined recursively by 
$$\forall (m,k)\in(\mathbb{N}^{*})^{2},\quad S(m,k)=S(m-1,k-1)+kS(m-1,k),$$
with $$S(0,0)=1,\quad\forall m\in\mathbb{N}^{*},\,\,S(m,1)=1\quad\mbox{ and }\quad S(m,0)=0\quad\mbox{ and if }m<k\mbox{ then }S(m,k)=0.$$
In particular,
\begin{equation*}
	\forall(\lambda,b)\in(0,\infty)\times(0,1],\quad I_{n}(\lambda b)K_{n}(\lambda)\underset{n\rightarrow+\infty}{\sim}\frac{b^{n}}{2n}\left(\sum_{m=0}^{+\infty}\frac{b_{m}(\lambda b)}{n^{m}}\right)\left(\sum_{m=0}^{+\infty}(-1)^{m}\frac{b_{m}(\lambda)}{n^{m}}\right).
\end{equation*}

\section{Technical theoretical results}
In this appendix, we gather general theoretical results used along this work.
\subsection{Bifurcation from simple eigenvalues}
We recall here the classical local bifurcation theorem due to Crandall and Rabinowitz \cite{CR71}. It gives sufficient condition to construct a one parameter family (curve) of non-trivial solutions emerging from a line of trivial solutions under some non-degeneracy conditions. The theorem reads as follows.
\begin{theo}[Crandall-Rabinowitz]\label{Crandall-Rabinowitz theorem}
	Let $X$ and $Y$ be two Banach spaces. Let $V$ be a neighborhood of $0$ in $X$ and $I\subset\mathbb{R}$ be an interval containing a number $\mathfrak{p}_0$. Consider
	$$F:\begin{array}[t]{rcl}
		I\times V & \rightarrow  & Y\\
		(\mathfrak{p},x) & \mapsto & F(\mathfrak{p},x)
	\end{array}$$
	 a function of class $C^{1}$ with the following properties 
	\begin{enumerate}[label=(\roman*)]
		\item (Trivial solutions) $\forall\,\mathfrak{p}\in I,\,\,F(\mathfrak{p},0)=0.$
		\item (Regularity) $\partial_{\mathfrak{p}}F$, $\partial_{x}F$ and $\partial_{\mathfrak{p}}\partial_{x}F$ exist and are continuous.
		\item (Fredholm property) $\partial_{x}F(\mathfrak{p}_0,0)$ is a Fredholm operator with index $0$ and $\ker\big(\partial_{x}F(\mathfrak{p}_0,0)\big)=\langle x_{0}\rangle.$
		\item (Transversality assumption) $\partial_{\mathfrak{p}}\partial_{x}F(\mathfrak{p}_0,0)[1,x_{0}]\not\in R\big(\partial_{x}F(\mathfrak{p}_0,0)\big).$
	\end{enumerate}
	If $\chi$ denotes any complement of $\ker\big(\partial_{x}F(\mathfrak{p}_0,0)\big)$ in $X$, then there exist 
	\begin{enumerate}[label=\textbullet]
		\item a neighborhood $U$ of $(\mathfrak{p}_0,0)$ in $I\times V,$
		\item an interval $(-a,a)$, for some $a>0$,
		\item continuous functions
		$\psi:(-a,a)\rightarrow \mathbb{R}$ and $\phi:(-a,a)\rightarrow\chi$ satisfying $\psi(0)=\mathfrak{p}_0$ and $\phi(0)=0$
	\end{enumerate} 
	such that the set of the zeros of $F$ in $U$ can be described as the following two curves intersecting at $(\mathfrak{p}_0,0)$
	$$\Big\{(\mathfrak{p},x)\in U\quad\textnormal{s.t.}\quad F(\mathfrak{p},x)=0\Big\}=\Big\{\big(\psi(s),sx_{0}+s\phi(s)\big)\quad\textnormal{s.t.}\quad|s|<a\Big\}\cup\Big\{(\mathfrak{p},0)\in U\Big\}.$$
\end{theo}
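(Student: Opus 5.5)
The plan is the classical Lyapunov--Schmidt type argument combined with a blow-up (desingularization) of the trivial branch, reducing everything to an implicit function theorem in which the auxiliary variable enters only continuously. Write $L_0\triangleq\partial_xF(\mathfrak{p}_0,0)$ and use the splitting $X=\langle x_0\rangle\oplus\chi$. By (iii), $L_0$ is Fredholm of index $0$ with one-dimensional kernel, so $R(L_0)$ is closed of codimension one. Transversality (iv) then gives
$$Y=R(L_0)\oplus\big\langle \partial_{\mathfrak{p}}\partial_xF(\mathfrak{p}_0,0)x_0\big\rangle .$$
On the trivial line the equation is already solved, so I look for the other solutions in the form $x=s(x_0+\varphi)$ with $s\in\mathbb{R}$ and $\varphi\in\chi$ small.

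The first step is to define the desingularized map $H:(-a,a)\times I\times(\chi\cap B_\delta)\to Y$ by
$$H(s,\mathfrak{p},\varphi)\triangleq\begin{cases}\frac{1}{s}F\big(\mathfrak{p},s(x_0+\varphi)\big), & s\neq0,\\[1mm] \partial_xF(\mathfrak{p},0)[x_0+\varphi], & s=0.\end{cases}$$
Using (i), I rewrite it as
$$H(s,\mathfrak{p},\varphi)=\int_0^1\partial_xF\big(\mathfrak{p},ts(x_0+\varphi)\big)[x_0+\varphi]\,dt,$$
which shows that $H$ is jointly continuous. Next I check that $H$ is $C^1$ in $(\mathfrak{p},\varphi)$, with these partial derivatives jointly continuous in $(s,\mathfrak{p},\varphi)$:
\begin{itemize}
\item For $s\neq0$ one has $\partial_\varphi H=\partial_xF(\mathfrak{p},s(x_0+\varphi))$, which tends to $\partial_xF(\mathfrak{p},0)$ as $s\to0$. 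No second $x$-derivative is needed here, which is the reason for differentiating the quotient form rather than the integral form.
\item The $\mathfrak{p}$-derivative is $\partial_{\mathfrak{p}}H=\int_0^1\partial_{\mathfrak{p}}\partial_xF(\mathfrak{p},ts(x_0+\varphi))[x_0+\varphi]\,dt$, and this is where the regularity hypothesis (ii) enters.
\end{itemize}
At the base point, $H(0,\mathfrak{p}_0,0)=L_0x_0=0$. The differential
$$(\mu,\psi)\mapsto\mu\,\partial_{\mathfrak{p}}\partial_xF(\mathfrak{p}_0,0)x_0+L_0\psi$$
is an isomorphism $\mathbb{R}\times\chi\to Y$. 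It is injective because $L_0$ is injective on $\chi$ and the transversal vector does not lie in $R(L_0)$, and it is onto by the direct sum above. The implicit function theorem with a parameter (only continuity in $s$ being required) then yields continuous maps $\psi(s)$ and $\phi(s)$ with $\psi(0)=\mathfrak{p}_0$, $\phi(0)=0$ and $H(s,\psi(s),\phi(s))=0$. This gives the nontrivial curve $(\psi(s),sx_0+s\phi(s))$.

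The step I expect to be the main obstacle is the local uniqueness, namely that every zero in a small neighbourhood $U$ lies on one of the two curves. Take a zero $(\mathfrak{p},x)$ near $(\mathfrak{p}_0,0)$ and decompose $x=sx_0+z$ with $z\in\chi$. Writing $F(\mathfrak{p},x)=\partial_xF(\mathfrak{p},0)x+o(\|x\|)$, which follows from the integral form and the continuity of $\partial_xF$, I apply a projection $Q$ onto $R(L_0)$ along the transversal direction. Since $L_0|_\chi$ is boundedly invertible onto $R(L_0)$ and $\partial_xF(\mathfrak{p},0)-L_0$ is small when $|\mathfrak{p}-\mathfrak{p}_0|$ is small, I get
$$\|z\|\leqslant\varepsilon\big(|s|+\|z\|\big),\qquad\text{hence}\qquad \|z\|\leqslant C\varepsilon|s|.$$
There are then two cases:
\begin{itemize}
\item If $s=0$, then $z=0$ and the point lies on the trivial line.
\item If $s\neq0$, then $\varphi\triangleq z/s$ is small and $H(s,\mathfrak{p},\varphi)=0$, so uniqueness in the implicit function theorem forces $\mathfrak{p}=\psi(s)$ and $\varphi=\phi(s)$.
\end{itemize}
Shrinking $U$ and $a$ so that all these smallness conditions hold simultaneously completes the description of the zero set.
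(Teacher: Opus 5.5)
The paper does not prove this theorem. It only recalls it from Crandall and Rabinowitz's original work, and your argument (desingularize via $s^{-1}F(\mathfrak{p},s(x_0+\varphi))$, apply the implicit function theorem with merely continuous dependence on $s$, then get local uniqueness from the a priori bound $\|z\|\leqslant C\varepsilon|s|$) is the standard proof of that result and is correct. The details worth making explicit are that $\chi$ must be a closed complement (so that $\mathbb{R}\times\chi$ is Banach and $|s|\lesssim\|x\|$), that $\mathfrak{p}_0$ should be interior to $I$, and that $U$ and $a$ must be fixed compatibly, e.g.\ with $U$ a product neighbourhood in the coordinates $(\mathfrak{p},s,z)$ with $|s|<a$, so that the curve stays in $U$ while every nontrivial zero in $U$ has $|s|<a$.
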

\subsection{Potential theory}
Here, we deal with singular integral operators of the type
\begin{equation}\label{operator-T}
\forall\, w\in\T,\quad\mathcal{T}(f)(w)=\fint_{\T} K(w,\xi)f(\xi)\, d\xi,
\end{equation}
where $K:\T\times \T\rightarrow\C$ is smooth off the diagonal $\{w=\xi\}$. The next result focuses on the smoothness of the last operator, whose proof can be found in \cite{HH15}. 

\begin{lem}\label{Lem-pottheory}
Let $0\leqslant \alpha<1$ and consider $K:\T\times\T\rightarrow \C$ with the following properties. There exists $C_0>0$ such that
\begin{itemize}
\item[(i)] $K$ is measurable on $\T\times\T\setminus\{(w,w), w\in\T\}$ and 
$$
\forall\, w\neq \xi\in\T,\quad|K(w,\xi)|\leqslant \frac{C_0}{|w-\xi|^\alpha}\cdot
$$
\item[(ii)] For each $\xi\in\T$, $w\mapsto K(w,\xi)$ is differentiable in $\T\setminus\{\xi\}$ and 
$$
\forall\, w\neq \xi\in\T,\quad|\partial_w K(w,\xi)|\leqslant \frac{C_0}{|w-\xi|^{1+\alpha}}\cdot
$$
\end{itemize}
Then,
\begin{enumerate}
\item The operator $\mathcal{T}$ defined by \eqref{operator-T} is continuous from $L^{\infty}(\T)$ to $C^{1-\alpha}(\T)$. More precisely, there exists a constant $C_{\alpha} > 0$, depending only on $\alpha$, such that
$$
\|\mathcal{T}(f)\|_{1-\alpha}\leqslant C_{\alpha}C_0\|f\|_{L^{\infty}}.
$$
\item For $\alpha=0$, the operator $\mathcal{T}$ is continuous from $L^{\infty}(\T)$ to $C^{\beta}(\T)$, for any $0<\beta<1$. That is, there exists a constant $C_{\beta} > 0$, depending only on $\beta$, such that
$$
\|\mathcal{T}(f)\|_{\beta}\leqslant C_{\beta}C_0\|f\|_{L^{\infty}}.
$$
\end{enumerate}
\end{lem}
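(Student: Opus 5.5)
The plan is the classical Calderón–Zygmund-type splitting near and far from the singularity. Boundedness comes first. By hypothesis (i), for every $w\in\T$,
$$|\mathcal{T}(f)(w)|\leqslant \frac{C_0\|f\|_{L^\infty}}{2\pi}\int_{\T}\frac{|d\xi|}{|w-\xi|^{\alpha}}\leqslant C_\alpha C_0\|f\|_{L^\infty}.$$
This holds because $\alpha<1$ makes the kernel integrable on the one-dimensional circle, uniformly in $w$. This gives the $L^\infty$ part of the Hölder norm in both assertions.

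For the Hölder seminorm, fix $w_1\neq w_2$ in $\T$ and set $d\triangleq|w_1-w_2|$. I will split $\T$ into two arcs:
\begin{itemize}
\item the near arc $B\triangleq\{\xi\in\T:\ |\xi-w_1|<2d\}$;
\item its complement $B^c$.
\end{itemize}

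On $B$, I estimate the two terms separately using (i). Since $|\xi-w_2|\leqslant 3d$ there,
$$\int_B\Big(|K(w_1,\xi)|+|K(w_2,\xi)|\Big)|f(\xi)|\,|d\xi|\lesssim C_0\|f\|_{L^\infty}\int_0^{3d}r^{-\alpha}\,dr\lesssim C_0\|f\|_{L^\infty}\,d^{1-\alpha}.$$

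On $B^c$, I write $K(w_1,\xi)-K(w_2,\xi)$ as an integral of $\partial_wK$ along the shorter arc joining $w_1$ to $w_2$. This arc has length $\lesssim d$, and every point $w$ on it satisfies $|w-\xi|\geqslant|w_1-\xi|-d\geqslant\tfrac12|w_1-\xi|$ for $\xi\in B^c$. This uses the equivalence of chordal and arc distances on $\T$, with harmless constants. Hypothesis (ii), which gives differentiability away from $\xi$, then yields $|K(w_1,\xi)-K(w_2,\xi)|\lesssim C_0\,d\,|w_1-\xi|^{-1-\alpha}$. Therefore
$$\int_{B^c}|K(w_1,\xi)-K(w_2,\xi)|\,|f(\xi)|\,|d\xi|\lesssim C_0\|f\|_{L^\infty}\,d\int_{2d}^{\pi}r^{-1-\alpha}\,dr.$$
For $\alpha>0$ this is bounded by $C_\alpha C_0\|f\|_{L^\infty}d^{1-\alpha}$, which proves the first assertion. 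For $\alpha=0$ the same integral gives $C_0\|f\|_{L^\infty}\,d\,|\log d|$, plus $O(d)$ when $d$ is not small.

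The one delicate point is the endpoint $\alpha=0$, where a logarithmic loss appears. For any $\beta<1$ I will use $d\,|\log d|\leqslant C_\beta d^{\beta}$ for $d\leqslant 2$ (the diameter of $\T$). Combined with the near-arc bound $d\leqslant 2^{1-\beta}d^\beta$, this gives the $C^\beta$ estimate of the second assertion, with a constant depending only on $\beta$.

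I will also track the geometric constants (arc versus chord, the factor $2\pi$ from $\fint$) only up to absolute factors, so that the final constants depend only on $\alpha$, respectively $\beta$, times $C_0$.
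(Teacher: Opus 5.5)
Your proposal is correct and matches the argument behind this lemma: the paper does not prove it and refers to \cite{HH15}, where it is proved exactly this way. That proof combines the $L^\infty$ bound with a splitting of $\mathbb{T}$ into the arc $|\xi-w_1|<2d$, handled by (i), and its complement, handled by the mean value inequality and (ii), with the logarithmic loss at $\alpha=0$ absorbed into $C^\beta$. You are also right to obtain assertion 1 only for $\alpha\in(0,1)$, with a constant of order $1/\alpha$. At the endpoint $\alpha=0$ one only gets a $d|\log d|$ modulus of continuity, which is precisely the case assertion 2 is there to cover.
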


\vspace{1cm}
\noindent\textbf{Vittorio Baroncini}\\ 
Departamento de Análisis Matemático and IMUS, Universidad de Sevilla, 41012 Seville, Spain.\\
E-mail address: vbaroncini@us.es\\

\noindent\textbf{Claudia Garc\'ia}\\
Departamento de Matematica Aplicada and Research Unit ``Modeling Nature" (MNat), Facultad de Ciencias, Universidad de Granada, 18071 Granada, Spain.\\
E-mail address:  claudiagarcia@ugr.es\\

\noindent \textbf{Emeric Roulley}\\
Dipartimento di Matematica ``Federigo Enriques'', Università degli Studi di Milano, Via Cesare Saldini 50, 20133 Milano,
Italy.\\
E-mail address : emeric.roulley@unimi.it
\end{document}